\newtheorem{myth}{Theorem}
\newtheorem{lemma}{Lemma}
\newtheorem{remark}{Remark}
\newtheorem{corollary}{Corollary}[myth]
\newcommand{\eg}{\textit{e}.\textit{g}.}
\newcommand{\ie}{\textit{i}.\textit{e}.}
\newcommand{\figwidth}{83mm}
\begin{document}

\title{Quickest Detection of Deception Attacks in Networked Control Systems with Physical Watermarking }

\author{Arunava Naha$^{1}$, André Teixeira$^{1}$, Anders Ahlén$^{1}$ and Subhrakanti Dey$^{2}$
	\thanks{*This work is supported by The Swedish Research Council (VR) under grants 2017-04053 and 2018-04396, and by the Swedish Foundation for Strategic Research.}
	\thanks{$^{1}$Arunava Naha, André Teixeira, and Anders Ahlén are with the Department of Electrical Engineering, Uppsala University, 751 03 Uppsala, Sweden
		{\tt\small arunava.naha@angstrom.uu.se, andre.teixeira@angstrom.uu.se, and Anders.Ahlen@angstrom.uu.se}}%
	\thanks{$^{2}$Subhrakanti Dey is with the Department of Electronic Engineering, Hamilton Institute, National University of Ireland, Maynooth, Ireland. He is also with the Department of Electrical Engineering, Uppsala University, 751 03 Uppsala, Sweden
		{\tt\small Subhra.Dey@signal.uu.se}}%
}

\markboth{This article submitted for review to IEEE Trans. Automatic Control}%
{Shell \MakeLowercase{\textit{et al.}}: Bare Demo of IEEEtran.cls for IEEE Journals}

\maketitle

\begin{abstract}
In this paper, we propose and analyze an attack detection scheme for securing the physical layer of a networked control system against attacks where the adversary replaces the true observations with stationary false data. An independent and identically distributed watermarking signal is added to the optimal linear quadratic Gaussian (LQG) control inputs, and a cumulative sum (CUSUM) test is carried out using the joint distribution of the innovation signal and the watermarking signal for quickest attack detection. We derive the expressions of the supremum of the average detection delay (SADD) for a multi-input and multi-output (MIMO) system under the optimal and sub-optimal CUSUM tests. The SADD is asymptotically inversely proportional to the expected Kullback–Leibler divergence (KLD) under certain conditions. The expressions for the MIMO case are simplified for multi-input and single-output systems and explored further to distil design insights. We provide insights into the design of an optimal watermarking signal to maximize KLD for a given fixed increase in LQG control cost when there is no attack. Furthermore, we investigate how the attacker and the control system designer can accomplish their respective objectives by changing the relative power of the attack signal and the watermarking signal. Simulations and numerical studies are carried out to validate the theoretical results.
\end{abstract}

\begin{IEEEkeywords}
CUSUM test, cyber-physical system, deception attack, Kullback–Leibler divergence, linear quadratic Gaussian control, networked control system, physical watermarking, resilient attack detection
\end{IEEEkeywords}

\IEEEpeerreviewmaketitle


\section{Introduction}
\label{sec:intro}
\IEEEPARstart{L}{arge} Large distributed networked control systems (NCS) are getting deployed in various sectors such as manufacturing units, transportation systems, power systems, robotics, etc. \cite{Satchidanandan2017}. Such cyber-physical systems (CPS) consist of embedded software, processors and other physical components. The components of CPS may be distributed over a large area, and communicate with each other via wired or wireless links. Along with their innumerable advantages, there is an increasing concern regarding safety and security. In the past, there have been several incidents of attack on CPS, such as, \eg, the Stuxnet attack \cite{Langner2011}, the attack on the sewage systems in Australia \cite{Abrams2008}, the attack on the Davis-Besse nuclear power plant in Ohio, USA \cite{Alvaro1992}. Attacks on such systems can cause loss of production, financial loss, a threat to human safety, etc. Securing CPS is a great challenge. The cyber layer is usually secured by employing cryptography, digital watermarking, etc. However, these measures cannot ensure the safety of the physical layer of the system.

There are two different attack strategies such as deception attack and denial of service (DoS) attack that adversaries usually apply to attack the physical layer of CPS \cite{Mo2015}. In the deception attack, the adversary feeds the NCS with false data either by replacing or distorting the true observations and/or the control inputs \cite{Mo2015,Satchidanandan2017}. The attacker always tries to statistically match the fake data to the real ones to remain stealthy. In one scenario, the attacker records the true observations for a while and feeds the system with the recorded data along with some harmful exogenous inputs at some later point in time. Such an attack strategy is called a replay attack \cite{Mo2015}. In the DoS attack, the attacker makes the data unavailable maybe by jamming the wireless network \cite{Salimi2019}. In both the attack strategies, the attacker’s objective is to make the system unstable or force the system to operate at a state outside it's desired normal behaviour, and at the same time to remain stealthy as long as possible to cause maximum damage \cite{Mo2015,Satchidanandan2017,Salimi2019}. In this paper, we have studied mainly a specific scenario of deception attacks, where the attacker hijacks the sensor nodes and feeds random but stationary fake observations to the state estimator. The noise and the uncertainty in the system always facilitate the attacker to remain stealthy. We also assume that the attacker has complete knowledge about the system, and controller parameters and knows the statistical properties of the noise and observations. 

\subsection{Related Work}
\label{subsec:related_work}
Several different approaches are found in the literature to secure CPS from the attacks on the physical layer.  In one approach, the security of the NCS is improved by designing attack resilient state estimators which can estimate the true states with bounded errors even if there is an attack \cite{Fawzi2014, Du2019, Nicola2018}. In \cite{Park2019,Chen2018}, the authors have studied different attack strategies which will be useful to design more resilient defence strategies. The defence strategies employed for attack detections can be broadly classified into two groups, \ie, passive and active. In the passive attack detection scheme, the innovation signal is normally used as a residue signal with different statistical tests to detect attacks \cite{Pasqualetti2013, Mousavinejad2018, Ge2019}. For example, a set membership filter-based algorithm is used in \cite{Mousavinejad2018} to detect malicious data injection attacks in the NCS, a two-stage distributed deception attack detection mechanism is published in \cite{Ge2019} based on the residual analysis of the Krein state-space model and locally distributed estimators. The passive detection schemes, in general, have an unsatisfactory probability of detection in the presence of noise and uncertainties.

On the other hand, active attack detection schemes add physical watermarking signals to the control inputs to improve the probability of detection at the expense of an increased control cost \cite{Mo2009, Mo2014, Mo2015, Satchidanandan2017, Ko2019, Fang2020}. In our paper, we follow this approach to design a resilient deception attack detection scheme. The idea of physical watermarking is analogous to the digital watermarking, which is used to authenticate the actual owner of a digital content. In \cite{Mo2009}, the process of detecting a replay attack by adding a random Gaussian and independent and identically distributed (iid) watermarking signal to the linear quadratic Gaussian (LQG) control inputs is introduced. The statistics of the innovation signal changes in the presence of an attack, which is detected by a properly designed $\chi^2$ detector. In \cite{Mo2014}, the authors provide a methodology to optimise the watermarking signal power, which will maximise the detection rate for a given increase in LQG control cost. In \cite{Mo2015}, the authors further generalise the method and find the optimum watermarking signal in the class of Gaussian stationary processes by maximising a relaxed version of the Kullback–Leibler divergence (KLD) measure. In \cite{Satchidanandan2017}, the authors design two residue signals, and the time average of them will converge to some finite values when the system is under attack, otherwise, it will be zero.  It is assumed that the attacker uses a mathematical model similar to the original system to generate fake measurements, but the attacker does have any knowledge of the actual noise and the watermarking signal values. The authors have demonstrated their methodology in laboratory setup in \cite{Ko2019}. The authors consider the system model with non-Gaussian process and observation noise, and design watermarking signal for such a system in \cite{Satchidanandan2020}. In \cite{Hespanhol2018}, the authors design a statistical watermarking test to detect the attack on the sensors and the underlying communication channels. The problem of false data injection attacks in the presence of packet drop is studied in \cite{Weerakkody2018} by the design of a joint Bernoulli-Gaussian watermarking. In \cite{Fang2020}, the authors reduce the increase of control cost by designing a periodic watermarking signal. In \cite{ Pradhan2017}, the trade-off between the controller utility and the detectability of an attack is studied.

In this paper, we have studied the problem of the quickest attack detection, which has not been addressed directly in most of the reported work in the literature. The study on the topic of quickest change detection can be traced back several decades \cite{Shiryaev1963}. For our paper, we have followed the work presented in \cite{ Tartakovsky2008, Tartakovsky2017, Lai1998, Tartakovsky2014, Girardin2018}. We have taken the non-Bayesian approach of change point detection where the change point or the attack point is unknown but deterministic. In \cite{Tartakovsky2008}, it is assumed that the data before and after the change point need to be iid. We show in our study that the test data is iid before the attack, but after the attack, the test data does not remain iid. However, the test data is asymptotically stationary with or without the attack. The study in \cite{Tartakovsky2017, Lai1998, Tartakovsky2014, Girardin2018} shows that under certain conditions the cumulative sum (CUSUM) test also provides the quickest change detection, \ie, it minimises the supremum of the average detection delay (SADD) for a fixed upper limit on the average run length (ARL) for the general non-iid case. Furthermore, the SADD asymptotically converges to the inverse of the expected value of the KLD for the non-iid case provided certain conditions are satisfied \cite{Girardin2018}. We have referred to the CUSUM test using the dependent distributions for the non-iid case as the optimal CUSUM test. If the CUSUM test is performed using the non-dependent distributions for the non-iid data, then we have mentioned it as a non-optimal CUSUM test. The latter may be applicable when finding the analytic form of the dependent distributions may not be feasible.

\subsection{Motivations and Contributions}
\label{subsec:contributions}
For the safety and security of CPS, it is of paramount importance to detect the attack with minimum possible delay, thus favouring quickest sequential detection based methods. The more the attacker remains stealthy, the more damage will be caused. The watermarking based detection techniques reported in \cite{Mo2015, Satchidanandan2017, Fang2020} are not specifically designed for quickest detection of attacks. Thus we will here focus on the design and analysis of the quickest sequential detection of deception attacks by applying watermarking to the control inputs while keeping the system performance within a prescribed safety limit as recommended by the resilience requirements of CPS under attacks \cite{Cardenas2008}. We consider a linear NCS where the attacker can hijack the sensor nodes and feed fake measurement data to the estimator. The fake measurement data are assumed to be stationary and generated from a stochastic linear system. The time of the attack is unknown but deterministic in nature. The plant is controlled by a LQG controller, which receives the estimated states from a Kalman filter (KF). The controller adds a stationary but iid watermarking signal to the optimal control inputs and performs a CUSUM based test on the joint distribution of the innovation signal and the watermarking signal for the attack detection. We have reported a preliminary study on this method for the scalar case applying non-optimal CUSUM test, in \cite{Salimi2019a}. In the current paper, we extend the work significantly by considering more generalized system models, in-depth analysis of the optimal CUSUM test for the non-iid data, and extensive numerical simulations. The proposed approach can also be applied to detect a replay attack after a few modifications as reported in \cite{replay_attack}. Our main contributions are as follows.

(i) We design a sequential quickest change detection test based on the CUSUM statistics that minimises the SADD subject to a lower bound on the ARL between two consecutive false alarms. Since it is uncertain how long the system will be operational, probability of false alarm (PFA) may not be a practically useful metric \cite{Urbina2016c,Giraldo2019}. We have also shown a sub-optimal sequential detection technique which will be useful where the optimal CUSUM test may not be feasible. 

(ii) It is known that SADD is asymptotically inversely proportional to the expected KLD or the KLD between the joint stationary density of the innovation and watermarking signal with and without the attack under the optimal CUSUM or sub-optimal CUSUM test \cite{Tartakovsky2017, Girardin2018}. We derive expressions of the expected KLD for the optimal CUSUM test and KLD for the sub-optimal case. An analysis of the behaviour of the KLD with respect to the watermarking signal power and attack signal power is performed, and some structural results are presented.

(iii) We demonstrate a technique to optimise the watermarking signal variance for a multi-input and single-output (MISO) system, that maximises the expected KLD (optimal CUSUM test) or KLD (sub-optimal CUSUM test) subject to an upper bound on the increase in LQG control cost.

(iv) We take the joint distribution of the innovation signal and the watermarking signal to increase the KLD, unlike some of the previous works which consider only the innovation signal. An increase in KLD results in lower SADD, and thus in quicker detection.
\subsection{Paper Organization}
\label{subsec:organization}
The organization of the remaining part of the paper is as follows. Section~\ref{sec:system_model} describes the system model with the LQG controller and the attack strategy adopted for the paper. The mechanism of adding watermarking, the CUSUM test, and the associated detection delay are explained in Section~\ref{sec:phy_watermarking}. All the theorems and lemmas associated with multi-input and multi-output (MIMO) and MISO systems are provided in Section~\ref{sec:main_results}. The optimization technique to maximize the KLD by finding a proper watermarking signal variance is also illustrated in Section~\ref{sec:main_results}. We present numerical results in Section~\ref{sec:numerical_results} to validate the theory. Section~\ref{sec:conclusion} concludes the paper. 

\subsection{Notations}
\label{subsec:notations}
We have used capital bold letters, \eg, $\bf{A}$, $\bf{B}$, etc. to specify matrices and small bold letters, \eg, $\bf{x}$, $\bf{y}$, etc. to specify vectors, unless specified otherwise. Some special notations are given in Table~\ref{tab:notations}.
\begin{table}[h!]
	\begin{center}
		\caption{Notations}
		\label{tab:notations}
		\begin{tabular}{l|l} 
			\hline \hline
			Symbol & Description \\
			\hline
			${\rm I\!R}^{n}$ & The set of $n\times 1$ real vectors \\
			${\rm I\!R}^{m\times n}$ & The set of $m\times n$ real matrices \\
			${\bf A}^T$ & Transpose of matrix or vector ${\bf A}$ \\
			$\mathcal{N}(\mu,{\bf \Sigma})$ & Gaussian distribution with mean $\mu$ and variance $\bf \Sigma$  \\
			$\left \{ \cdot \right \}\cup \left \{\cdot \right \}$ & Union of two sets \\
			$\bf \Sigma \ge 0$  & $\bf \Sigma$ is positive semi-definite matrix \\
			$\bf \Sigma > 0$  & $\bf \Sigma$ is positive definite matrix \\
			${\bf x}_{a,k}$, ${\bf u}_{n,k}$, etc. & $k$-th instant value of the corresponding variable \\
			$[\cdot]_{ij}$ & $i$-th row and $j$-th column element of a matrix \\
			${\lambda_{\gamma ,i}}$, $\lambda_{e,i}$, etc. & $i$-th element of the corresponding vector \\
			$|\cdot|$ & Determinant of a matrix or absolute value of a scalar \\
			$tr(\cdot)$ & Trace of a matrix \\
			$\left\{{\bf X}\right\}^{k-1}_1$  & $\left\{X_i:1\le i \le k-1\right\}$\\
			\hline \hline
		\end{tabular}
	\end{center}
\end{table}



\section{System and Attack Model} \label{sec:system_model}
This section discusses the system model during the normal operations and under attack, and the attack strategy of the adversary considered in this paper. 
\subsection{System Model during Normal Operations}
\label{subsec:system_model_normal}
\begin{figure}[h!]
	\centering
	\includegraphics[width=50mm]{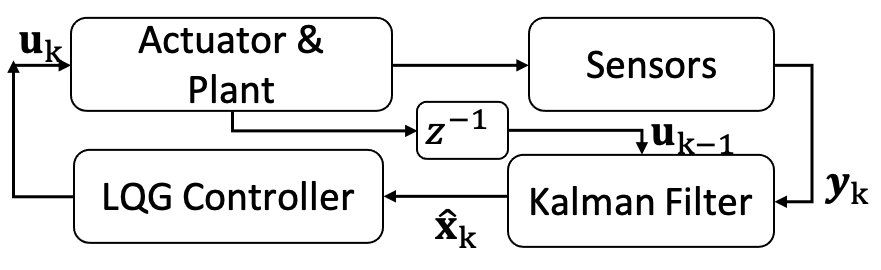}
	\caption{Schematic diagram of the system during normal operation.}
	\label{fig:sche_system_normal}
\end{figure}

We consider the following structure of the NCS, see Fig.~\ref{fig:sche_system_normal} for a schematic diagram of the complete system during the normal operation,
\begin{align}
	{\bf{x}}_{k+1}={\bf A}{\bf{x}}_{k}+{\bf B}{\bf{u}}_{k}+{\bf{w}}_{k}.
	\label{eqn:state_eqn}
\end{align}
Here ${\bf{x}}_{k}\in {\rm I\!R}^{n}$ and ${\bf{u}}_{k}\in {\rm I\!R}^{p}$ are the state and input vectors at the $k$-th time instant respectively, whereas ${\bf{w}}_{k} \in {\rm I\!R}^{n} \sim \mathcal{N}(0,{\bf Q})$ is an iid process noise. ${\bf{A}}\in {\rm I\!R}^{n\times n}$, ${\bf{B}}\in {\rm I\!R}^{n\times p}$, and ${\bf{Q}}\in {\rm I\!R}^{n\times n}$. ${\bf{Q}}\ge {\bf 0}$. Furthermore, 
\begin{align}
	{\bf{y}}_{k}={\bf C}{\bf{x}}_{k}+{\bf{v}}_{k}
	\label{eqn:obj_eqn}
\end{align}
where ${\bf{y}}_{k} \in {\rm I\!R}^{m}$ is the sensor output or the observation vector at the $k$-th time instant. Here ${\bf{C}}\in {\rm I\!R}^{m\times n}$, and ${\bf{v}}_{k} \in {\rm I\!R}^{m} \sim \mathcal{N}(0,{\bf R})$ is the iid measurement noise. We assume, ${\bf R} > {\bf 0}$. The noise vectors ${\bf{v}}_{k}$ and ${\bf{w}}_{k} $ are mutually independent, and both are independent of the initial state vector, ${\bf{x}}_{k_0}$. 
We assume the system is stabilizable and detectable. We also assume that the system has been operational for a long time, thus the system is currently at steady state. 

The Kalman filter (KF) uses the sensor measurements and the input signal information, and estimates the states as follows.
\begin{align}
	{\hat{\bf{x}}}_{k|k-1} &={\bf A}{\hat{\bf{x}}}_{k-1|k-1}+{\bf B}{\bf{u}}_{k-1} \label{eqn:est_state_update}\ \\
	{\hat{\bf{x}}}_{k|k} &={\hat{\bf{x}}}_{k|k-1}+{\bf K}\gamma_k
	\label{eqn:kf_state_eqn}
\end{align}
where ${\hat{\bf{x}}}_{k|k-1} =E[{\bf{x}}_{k}|\Psi_{k-1}]$ and ${\hat{\bf{x}}}_{k|k} =E[{\bf{x}}_{k}|\Psi_{k}]$ are the predicted and filtered state estimates respectively. $E[\cdot]$ denotes the expected value and $\Psi_{k}$ is the set of all measurements up to time $k$. The innovation $\gamma_k$ and steady state Kalman gain $\bf K$ are given by 
\begin{align}
	\gamma_k&={\bf y}_k-{\bf C}{\hat{\bf{x}}}_{k|k-1} \label{eqn:gamma_k} \ \\
	{\bf K}&={\bf P}{\bf C}^T\left({\bf C}{\bf P}{\bf C}^T+{\bf R}\right)^{-1} \label{eqn:kalman_gain}
\end{align}
where ${\bf P}=E\left[({\bf x}_k-{\hat{ \bf x}}_{k|k-1}) ({\bf x}_k-{\hat{ \bf x}}_{k|k-1})^T   \right]$ is the steady state error covariance. ${\bf P} $ is the solution to the following algebraic Riccati equation
\begin{align}
	{\bf P}={\bf A}{\bf P}{\bf A}^T+{\bf Q}-{\bf A}{\bf P}{\bf C}^T\left( {\bf C}{\bf P}{\bf C}^T +{\bf R} \right)^{-1}{\bf C}{\bf P}{\bf A}^T.
	\label{eqn:P}
\end{align}

The control input ${\bf u}_k$ is generated by minimizing the following infinite horizon LQG cost 
\begin{align}
	J=\lim_{T\to\infty}E\left[ \frac{1}{2T+1}\left\{ \sum_{k=-T}^T \left( {\bf x}^T_k{\bf W}{\bf x}_k+{\bf u}^T_k{\bf U}{\bf u}_k\right)\right\} \right]
	\label{eqn:cost_fun_J}
\end{align}
where ${\bf{W}}\in {\rm I\!R}^{n\times n}$ and ${\bf{U}}\in {\rm I\!R}^{p\times p}$ are positive definite diagonal weight matrices. The optimum input appears as a fixed gain linear control signal given by
\begin{align}
	{\bf u}^*_k&={\bf L}{\hat {\bf x}}_{k|k}  \label{eqn:opt_u} \ \\
	{\bf L}&=-\left( {\bf B}^T{\bf S}{\bf B}+{\bf U}\right)^{-1}{\bf B}^T{\bf S}{\bf A} \label{eqn:L}
\end{align}
where $\bf S$ is the solution to the following algebraic Riccati equation,
\begin{align}
	{\bf S}={\bf A}^T{\bf S}{\bf A}+{\bf W}-{\bf A}^T{\bf S}{\bf B}\left({\bf B}^T{\bf S}{\bf B}+{\bf U}\right)^{-1}{\bf B}^T{\bf S}{\bf A}.
	\label{eqn:S}
\end{align}

\subsection{Attack Strategy and Changes in System Model}
\label{subsec:attack_strategy}
The attack strategy of the adversary considered in this paper is discussed here. We assume that the attacker has the following knowledge about the system.  
\begin{enumerate}
	\item The attacker knows the system parameters ${\bf A}$, ${\bf B}$, ${\bf C}$, ${\bf Q}$, and ${\bf R}$, and also the control policy, \ie, ${\bf L}$.
	\item The attacker can tamper with the integrity of the sensor nodes and feed undesired information to the system.
	\item The attacker does not have access to the control signal or the controller.  
\end{enumerate}
The objective of the adversary is to cause harm to the system by replacing the true sensor measurements ${\bf y}_k$ by fake observations ${\bf z}_k$, and at the same time remain stealthy.  The adversary can achieve his goal by jamming or overpowering the true sensor data sent over a wireless link or by hijacking the sensor nodes (man-in-the-middle attack). The adversary will also try to remain undetected as long as possible to cause maximum damage to the system. Figure~\ref{fig:sche_system_attack} shows a schematic diagram of the system under attack. The system is assumed to be normal till the time $k < \nu$, and the attacker replaces the true observation ${\bf y}_k$ by the fake observation ${\bf z}_k$ at a deterministic but unknown time instant $k=\nu$, and keeps on injecting the fake observation for $k\ge \nu$. 
\begin{figure}[h!]
	\centering
	\includegraphics[width=65mm]{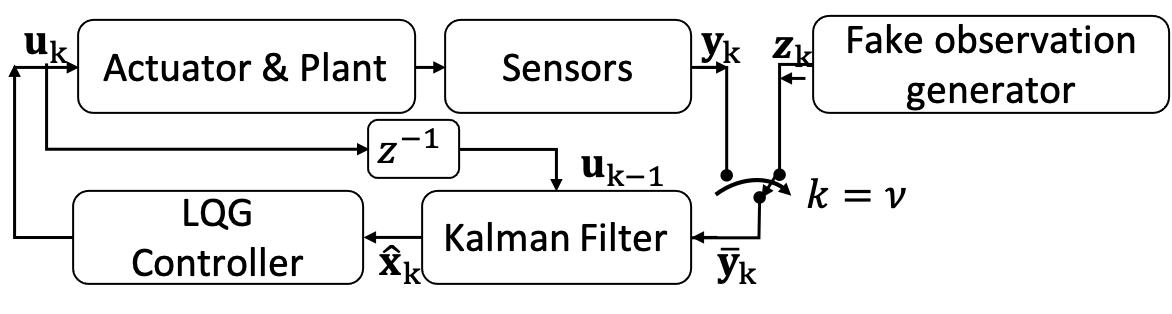}
	\caption{Schematic diagram of the system under attack. $\bar {\bf y}_k={\bf y}_k \text{ if } k<\nu, \bar {\bf y}_k={\bf z}_k \text{ otherwise}$.}
	\label{fig:sche_system_attack}
\end{figure}
It is assumed that the fake observations will be generated by the following stochastic linear system
\begin{align}
	{\bf z}_{k}&={\bf A}_a{\bf z}_{k-1}+{\bf w}_{a,k-1} \label{eqn:hidden_states_main}
\end{align}
where ${\bf z}_{k} \in {\rm I\!R}^{m}$, and ${\bf w}_{a,k} \sim \mathcal{N}(0,{\bf Q}_a)$ is the iid noise vector at the $k$-th time instant. ${\bf Q}_a \in {\rm I\!R}^{m \times m}$ and ${\bf Q}_a\ge 0$. The attacker will try to keep the statistical properties of ${\bf z}_k$, i.e., mean and variance, similar to the true observation ${\bf y}_k$ to remain stealthy. Since the true measurement ${\bf y}_k$ is stationary, the attacker will keep the fake measurement ${\bf z}_k$ stationary by taking the initial covariance of ${\bf z}_{k}$ as ${\bf E}_{zz}(0)\triangleq E\left[\bf{z}_k\bf{z}_k^T\right]$ to remain stealthy, where ${\bf E}_{zz}(0)$ is the solution to the following Lyapunov equation,
\begin{align}
	{\bf E}_{zz}(0)={\bf A}_a{\bf E}_{zz}(0){\bf A}^T_a+{\bf Q}_a.
	\label{eqn:czo}
\end{align}

The estimated states from the Kalman filter will take the following form when the system is under attack, \ie, $k\ge \nu$,
\begin{align}
	{\hat{\bf{x}}}^F_{k|k-1} &={\bf A}{\hat{\bf{x}}}^F_{k-1|k-1}+{\bf B}{\bf{u}}_{k-1} \label{eqn:xF_k_k_1}\ \\
	{\hat{\bf{x}}}^F_{k|k} &={\hat{\bf{x}}}^F_{k|k-1}+{\bf K}{\widetilde \gamma}_k  \label{eqn:kf_state_eqn_attack}\ \\
	{\widetilde {\bf \gamma}}_k &= {\bf z}_{k} - {\bf C}{\hat{\bf{x}}}^F_{k|k-1} \label{eqn:gamma_k_attack}.
\end{align}
It is the same Kalman filter as given in (\ref{eqn:est_state_update})-(\ref{eqn:P}) with the true observation ${\bf y}_k$ replaced by the fake data ${\bf z}_k$. So, the defender does not need to change anything for the Kalman filter during the attack. 

An attacker can make the system unstable by following the described attack model. For illustration, the true and estimated states of System-A is plotted in Fig.~\ref{fig:true_est_states_unstable} when the system is under attack from the time instant $k=500$. See the model parameters of System-A from Appendix~\ref{apdx:system_param}. The system becomes unstable soon after the attack. 
\begin{figure}[h!]
	\centering
	\includegraphics[width=\figwidth]{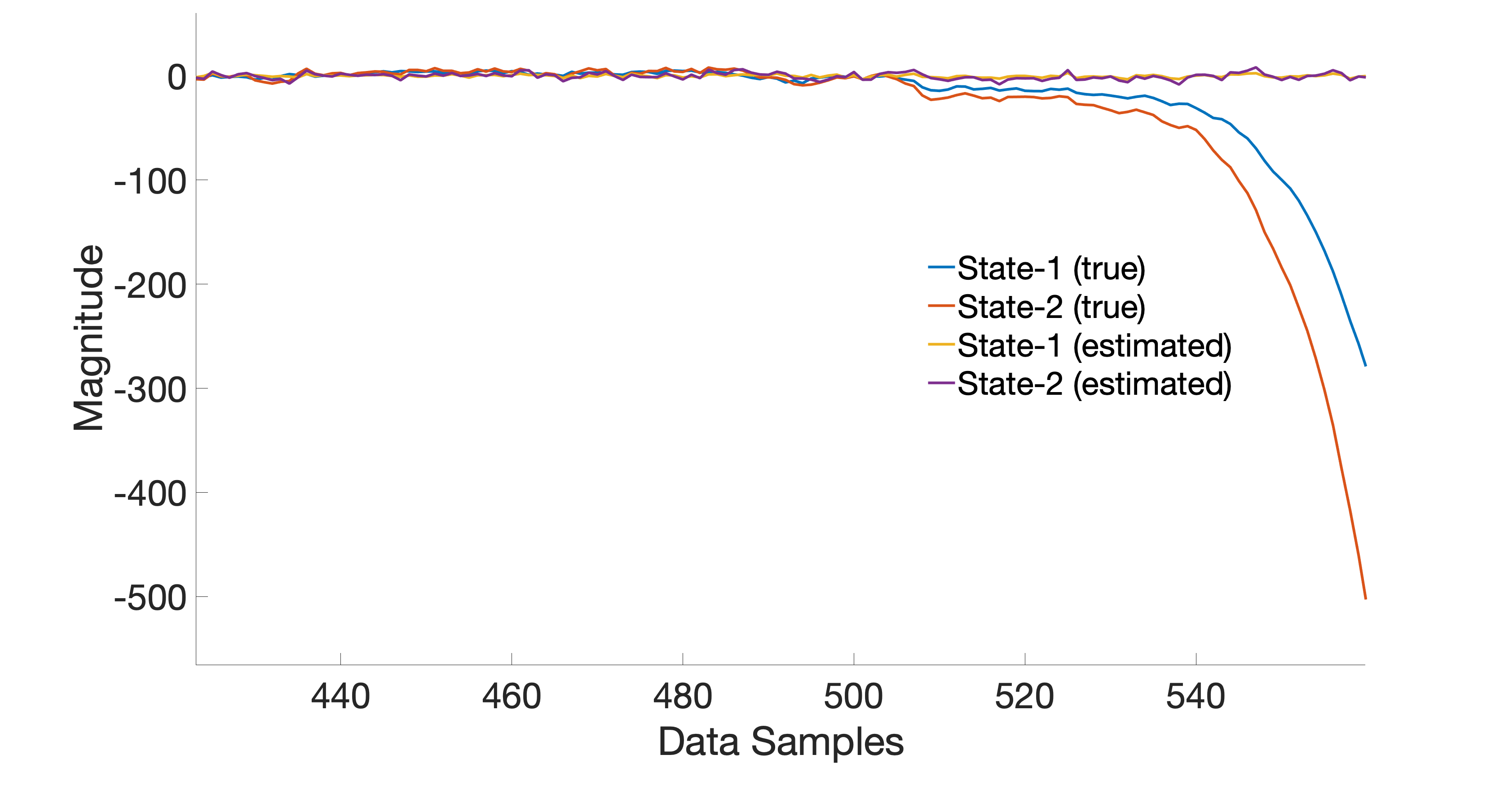}
	\caption{True and estimated states of System-A.}
	\label{fig:true_est_states_unstable}
\end{figure}



\section{Physical Watermarking based Defence Mechanism and Delay in Detection}
\label{sec:phy_watermarking}
This section proposes the physical-watermarking-based sequential attack detection scheme and discusses about the delay in the detection process. We use hypothesis testing to detect the attack. There are two different hypotheses to choose from, 
\begin{itemize}
	\item $H_0$: No attack. Estimator receives the true observation ${\bf y}_k$ 
	\item $H_1$: Attack. Estimator receives a fake observation ${\bf z}_k$. 
\end{itemize}
The innovation signals ((\ref{eqn:gamma_k_attack}) and (\ref{eqn:gamma_k})) under attack and no attack contain different information. Therefore, the innovation signal is the natural selection of information source for hypothesis testing. The probability density functions (PDF) of ${\gamma_k}$ and $\widetilde{\gamma}_k$ are denoted as $f_{\gamma_k}({\bar \gamma}_k)$ and $f_{{\widetilde \gamma}_k}({\bar \gamma}_k)$ respectively, where ${\bar \gamma}_k={ \gamma}_k$  before attack, and ${\bar \gamma}_k={\widetilde \gamma}_k$ after attack. Both the distributions $f_{\gamma_k}({\bar \gamma}_k)$ and $f_{{\widetilde \gamma}_k}({\bar \gamma}_k)$ are stationary in nature. The probability of attack detection will increase if the KLD i.e., $D\left(f_{\widetilde \gamma_k},f_{\gamma_k}\right)$, between the two distributions $f_{\widetilde \gamma_k}({\bar \gamma}_k)$ and $f_{ \gamma_k}({\bar \gamma}_k)$ under $H_1$ and $H_0$ increases \cite{Tartakovsky2014},
\begin{align}
	D\left(f_{\widetilde \gamma_k},f_{\gamma_k}\right)=\int_{ {\rm I\!R}^{m}} f_{\widetilde \gamma_k}({\bar \gamma})\log\frac{f_{\widetilde \gamma_k}({\bar \gamma})}{f_{\gamma_k}({\bar \gamma})}d{\bar \gamma}.
	\label{eqn:eqn_d}
\end{align}
The adversary will always try to remain stealthy by keeping the KLD low and thus cause maximum damage to the system. Therefore, the task of the control system designer is to maximize the KLD, thus making it difficult for the attacker to remain stealthy. Disturbances and measurement noise create uncertainty which favours the adversary. 

\subsection{Physical Watermarking}
A well-adopted technique to detect attacks on the control system is to add a watermarking signal, as described above \cite{Mo2015,Satchidanandan2017}. The control designer thus adds a random watermarking signal ${\bf e}_k$ to the optimal LQG control input ${\bf u}_k^*$, see (\ref{eqn:add_ek}). The actual values of the watermarking signal will only be known to the controller and not to the attacker. However, the attacker may know the statistics of the watermarking signal. 
\begin{align}
	{\bf u}_k = {\bf u}^*_k+{\bf e}_k
	\label{eqn:add_ek}
\end{align}
where ${\bf u}^*_k$ is the optimal input (\ref{eqn:opt_u}), ${\bf e}_k \sim \mathcal{N}(0,{\bf \Sigma}_e)$ is an iid process, and ${\bf \Sigma}_e\ge 0$, and possibly non-diagonal matrix. In the literature, ${\bf e}_k$ is also taken to be a stationary Gauss-Markov process by some researchers. However, for our work, we assume it to be iid. The addition of ${\bf e}_k$ provides a means to the controller to check the authenticity of the measurement signal fed to the system. The distribution of the innovation signal will change substantially if the true measurement ${\bf y}_k$, which is correlated to ${\bf e}_{k-1}$, is replaced by ${\bf z}_k$, which is independent of ${\bf e}_{k-1}$, even if the attacker knows the statistics of ${\bf e}_{k}$.

Detection of the attack as early as possible is of utmost importance to reduce the damage. The optimal Neyman-Pearson (NP) test \cite{Mo2015} and the asymptotic test \cite{Satchidanandan2017} reported in the literature for the attack detection do not address the challenge of earliest detection. To this end, we have adopted a non-Bayesian sequential detection scheme \cite{Tartakovsky2014} to detect the attack at the earliest time instant. It is assumed the attack takes place at a deterministic but unknown point in time. Instead of using the innovation signals $\gamma_k$ and ${\widetilde {\bf \gamma}}_k$ alone, we use the joint distributions of $\gamma_k$ and ${\bf e}_{k-1}$, and ${\widetilde {\bf \gamma}}_k$ and ${\bf e}_{k-1}$ for the test. We show the simulation results in the Section~\ref{sec:numerical_results} that such a choice reduces the detection delay. 
The innovation signal during normal operation of the system and under attack will take the following forms (\ref{eqn:gamma_e}) and (\ref{eqn:gamma_e_attack}), respectively,
\begin{align}
	\gamma_k&={\bf y}_k-{\bf C}{\bf \hat x}_{k|k-1}  \cr
	&={\bf C}{\bf A}\left({\bf x}_{k-1}-{\bf \hat x}_{k-1|k-1}\right)+{\bf C}{\bf w}_{k-1}+{\bf v}_k , \label{eqn:gamma_e} \ \\
	\widetilde\gamma_k&={\bf z}_k-{\bf C}{\bf \hat x}^F_{k|k-1} \cr
	&={\bf z}_k-{\bf C}\left({\bf A}+{\bf B}{\bf L}\right){\bf \hat x}^F_{k-1|k-1}-{\bf C}{\bf B}{\bf e}_{k-1}.
	\label{eqn:gamma_e_attack}
\end{align}
It is evident from (\ref{eqn:gamma_e}) and (\ref{eqn:gamma_e_attack}) that the innovation signal during the normal operation of the system will be uncorrelated with the watermarking signal. However, on the contrary, the innovation signal will be correlated with the watermarking signal during the attack. 

\subsection{Detection Delay} 
\label{subsec:detection_delay}
We use the delay in the attack detection as the metric to measure the performance of the defence strategy. Here we adopt the theory of asymptotic optimality of the CUSUM test when the signal before and after the change (attack) may not be iid \cite{Tartakovsky2014}. We start this section by introducing the definitions of relevant terms as follows. \ \\
\textbf{Average Detection Delay (ADD)}: ADD is defined as 
\begin{equation}
	ADD \triangleq E_{\nu}\left[ T_{H_1}-\nu|T_{H_1}>\nu\right]
	\label{eqn:add}
\end{equation}
where $E_{\nu}[\cdot]$ is the expectation taken with respect to the PDF under attack. Here $\nu$ is the attack starting point in time which is assumed to be unknown but deterministic in nature, whereas $T_{H_1}$ is the attack starting point detected by a hypothesis testing algorithm.  \ \\
\textbf{Supremum Average Detection Delay (SADD)}: SADD is defined as
\begin{equation}
	SADD\triangleq \sup _{1\le\nu<\infty} E_{\nu}\left[ T_{H_1}-\nu|T_{H_1}>\nu\right].
	\label{eqn:sadd}
\end{equation}
\textbf{Average Run Length (ARL)}: ARL is defined as
\begin{equation}
	ARL\triangleq E_{\infty}\left[T_{H_1}\right]
	\label{eqn:arl}
\end{equation}
where $E_{\infty}[\cdot]$ is the expectation taken with respect to the PDF when there is no attack, \ie, $\nu=\infty$. ARL represents the average time between two false alarms.  \ \\
Ideally, we would like to have a detection scheme that will minimize ADD for any value of $\nu$ for a fixed threshold on ARL. However, such a detection scheme does not exist \cite{Tartakovsky2014}. We can only find a procedure that will minimize the worst-case ADD for any $\nu$, \ie, SADD, for a fixed threshold on ARL. As per the theory presented in \cite{Tartakovsky2014}, CUSUM is one of such procedures. The CUSUM procedure is asymptotically minimax in the sense of minimizing the SADD for all $\nu > 0$, as $ARL_h \rightarrow \infty$, and the minimum SADD is   
\begin{equation}
	SADD \sim \frac{\log(ARL_h)}{I} 
	\label{eqn:add_lim}
\end{equation}
where $I$ is a finite positive real number, $ARL_h$ is the threshold on ARL, $ARL\ge ARL_h$, provided the following three conditions are satisfied \cite{Tartakovsky2014}:  
\begin{align}
	&\text{i) }\frac{1}{n}\lambda^\nu_{\nu+n}\xrightarrow[n\rightarrow\infty]{P_\nu}I  \label{eqn:condn_1}, \  \\
	&\text{ii) }\sup_{0\le\nu<\infty} \ ess  \sup P_\nu\left\{M^{-1}\max_{0\le n<M}\lambda^\nu_{\nu+n}\ge \right. \cr
	&\left. (1+\epsilon)I|\Psi_\nu\right\} \xrightarrow[M\rightarrow\infty]{}0,\ \forall\ \epsilon >0  \label{eqn:condn_2} \text{, and } \ \\
	&\text{iii) }\sup_{0\le\nu<k} \ ess  \sup P_\nu\left\{n^{-1}\lambda^k_{k+n}<I(1-\epsilon)|\Psi_\nu \right\}\xrightarrow[n\rightarrow\infty]{}0, \cr
	&\   \forall\ 0<\epsilon<1\ \text{ and }\ k\ge0   \label{eqn:condn_3} 
\end{align}
where $P_\nu$ indicates the probability after the change and $M$ is a positive integer variable. Here $\Psi_\nu$ is the set of all observations up until the change point $\nu$. The variable $\lambda^\nu_{\nu+n}$ is defined as
\begin{equation}
	\lambda^\nu_{\nu+n} \triangleq \sum_{k=\nu+1}^{n+\nu}\log\frac{f_{\nu,k}\left( X_k|\left\{{\bf X}\right\}^{k-1}_1\right)}{f_{\infty,k}\left( X_k|\left\{{\bf X}\right\}^{k-1}_1\right)}
	\label{eqn:lambda}
\end{equation}
where $X_k$ is the observation at the $k$-th time instant and $\left\{{\bf X}\right\}^{k-1}_1=\left\{X_i:1\le i \le k-1\right\}$. In (\ref{eqn:lambda}), $f_{\nu,k}(\cdot|\cdot)$ and $f_{\infty,k}(\cdot|\cdot)$  are the PDFs of the observations at the $k$-th time instant for an attack starting at $\nu$ and without an attack, respectively.

For the case of attack detection using the joint distributions of innovation and watermarking signals,
\begin{equation}
	\lambda^\nu_{\nu+n}=\sum_{k=\nu+1}^{n+\nu}\log\frac{f_{{\widetilde {\bf \gamma}_k},{\bf e}_{k-1}}\left({{ {\widetilde {\bf \gamma}}}_k,{\bf e}_{k-1}}|\left\{\bar\gamma\right\}_1^{k-1},\left\{{\bf e}\right\}_1^{k-2}\right)}{f_{{ {\bf \gamma}_k},{\bf e}_{k-1}}\left({{ \widetilde {\bf \gamma}}_k,{\bf e}_{k-1}}|\left\{\bar\gamma\right\}_1^{k-1},\left\{{\bf e}\right\}_1^{k-2}\right)}
	\label{eqn:lambda_attack_case}
\end{equation} 
where $f_{{\widetilde {\bf \gamma}_k},{\bf e}_{k-1}}(\cdot|\cdot)$ and $f_{ {{\bf \gamma}_k},{\bf e}_{k-1}}(\cdot|\cdot)$ are the joint dependent distributions of the innovation signal at the $k$-th time instant and watermarking signal at $(k-1)$-th time instant for the attack and no attack cases, respectively. $\left\{\bar\gamma\right\}_1^{k-1} = \left\{\gamma_i:1\le i < \nu\right\}\cup\left\{{\widetilde \gamma}_i:\nu\le i \le k-1\right\}$.
The data ($\gamma_k$, $\widetilde{\gamma}_k$ and ${\bf e}_{k-1}$) satisfy the mean ergodicity theorem because of their stationarity property. The previously mentioned three conditions are satisfied under the mean ergodicity property of the data, and we can say $I$ converges to the expected value of the KLD between $f_{{\widetilde {\bf \gamma}_k},{\bf e}_{k-1}}(\cdot|\cdot)$ and $f_{{\bf \gamma}_k,{\bf e}_{k-1}}(\cdot|\cdot)$ as $n \rightarrow \infty$ \cite{Girardin2018}. In other words, 
\begin{align}
	&I\rightarrow \frac{1}{n}\sum_{k=\nu+1}^{n+\nu}\log\frac{f_{{\widetilde {\bf \gamma}_k},{\bf e}_{k-1}}\left({{\widetilde {\bf \gamma}}_k,{\bf e}_{k-1}}|\left\{\bar\gamma\right\}_1^{k-1},\left\{{\bf e}\right\}_1^{k-2}\right)}{f_{{ {\bf \gamma}_k},{\bf e}_{k-1}}\left({{ \widetilde {\bf \gamma}}_k,{\bf e}_{k-1}}|\left\{\bar\gamma\right\}_1^{k-1},\left\{{\bf e}\right\}_1^{k-2}\right)},\nonumber \\
	&\text{as } n\rightarrow\infty \text{, which converges to the following form,} \cr
	&E\left[ \int_{{\rm I\!R}^{m+p}} \log\frac{f_{{\widetilde {\bf \gamma}_k},{\bf e}_{k-1}}\left({{\widetilde {\bf \gamma}}_k,{\bf e}_{k-1}}|\left\{\bar\gamma\right\}_1^{k-1},\left\{{\bf e}\right\}_1^{k-2}\right)}{f_{{ {\bf \gamma}_k},{\bf e}_{k-1}}\left({{\widetilde {\bf \gamma}}_k,{\bf e}_{k-1}}|\left\{\bar\gamma\right\}_1^{k-1},\left\{{\bf e}\right\}_1^{k-2}\right)}\right. \cr 
	&\left. f_{{\widetilde {\bf \gamma}_k},{\bf e}_{k-1}}\left({{\widetilde {\bf \gamma}}_k,{\bf e}_{k-1}}|\left\{\bar\gamma\right\}_1^{k-1},\left\{{\bf e}\right\}_1^{k-2}\right)d{\bf \gamma}d{\bf e} \right]  \cr
	&=E\left[ D\left(f_{{\widetilde {\bf \gamma}_k},{\bf e}_{k-1}},f_{{ {\bf \gamma}_k},{\bf e}_{k-1}} |\left\{\bar\gamma\right\}_1^{k-1},\left\{{\bf e}\right\}_1^{k-2}\right)\right]. \label{eqn:I_eq_kld} 
\end{align}
Here, the expectation is taken over the joint  distribution of  $\left\{\bar\gamma\right\}_1^{k-1},\left\{{\bf e}\right\}_1^{k-2}$. 

\subsection{Optimal and Sub-optimal CUSUM Tests}
\label{eqn:cusum_test}
The following CUSUM test will minimize the SADD asymptotically,
\begin{align}
	&gd_k= \nonumber \\
	&\max\left(0,gd_{k-1}+\log\frac{f_{{\widetilde {\bf \gamma}_k},{\bf e}_{k-1}}\left({{\bar {\bf \gamma}}_k,{\bf e}_{k-1}}|\left\{\bar\gamma\right\}_1^{k-1},\left\{{\bf e}\right\}_1^{k-2}\right)}{f_{{ {\bf \gamma_k}},{\bf e}_{k-1}}\left({{\bar {\bf \gamma}}_k,{\bf e}_{k-1}}\right)}\right)
	\label{eqn:cusum_opt} \\
	&\text{where } {\bar \gamma}_k={ \gamma}_k  \text{ before attack, and }  {\bar \gamma}_k={\widetilde \gamma}_k \text{ after attack, and} \nonumber \\
	&SADD^*
	\rightarrow \frac{\log(ARL_h)}{E\left[D\left(f_{{\widetilde {\bf \gamma}_k},{\bf e}_{k-1}},f_{{ {\bf \gamma}_k},{\bf e}_{k-1}}|\left\{\bar\gamma\right\}_1^{k-1},\left\{{\bf e}\right\}_1^{k-2} \right)\right]}, \nonumber \\ 
	&\ as\ ARL_h \rightarrow \infty.
	\label{ref:opt_SADD}
\end{align}
Since before the attack the innovation signal ${\bf \gamma}_k$ and the watermarking signal ${\bf e}_{k-1}$ both are iids, and also uncorrelated to each other, the non-dependent distribution is used in the denominator of (\ref{eqn:cusum_opt}). The controller decides on hypothesis $H_0$ or $H_1$ based on the following test,
\begin{description}
	\item[$H_0:$] Selected, when $gd_k < \log(ARL_h)$
	\item[$H_1:$] Selected, when $gd_k \ge  \log(ARL_h)$.
\end{description}
For certain cases, the closed-form expressions for the dependent distributions may not be found analytically, or it may be computationally too complex. Under such scenarios, the following sub-optimal CUSUM test can be carried out using the non-dependent distributions for sequential attack detection,
\begin{equation}
	g_k=\max\left(0,g_{k-1}+\log\frac{f_{{\widetilde {\bf \gamma}_k},{\bf e}_{k-1}}\left({{\bar {\bf \gamma}}_k,{\bf e}_{k-1}}\right)}{f_{{ {\bf \gamma_k}},{\bf e}_{k-1}}\left({{\bar {\bf \gamma}}_k,{\bf e}_{k-1}}\right)}\right).
	\label{eqn:cusum_subopt}
\end{equation}
Under the assumption that the system has been operating under a sufficiently long time, the joint distributions of the innovation and watermarking signal  converge to their stationary distributions. Therefore, in what follows, we use only the stationary PDFs for the sub-optimal case. Under the sub-optimal CUSUM test, the SADD will converge as follows, since $I$ (\ref{eqn:add_lim}) converges to ${D\left(f_{{\widetilde {\bf \gamma}_k},{\bf e}_{k-1}},f_{{ {\bf \gamma}_k},{\bf e}_{k-1}}\right)}$.
\begin{align}
	&SADD
	\rightarrow \frac{\log(ARL_h)}{D\left(f_{{\widetilde {\bf \gamma}_k},{\bf e}_{k-1}},f_{{ {\bf \gamma}_k},{\bf e}_{k-1}}\right)}, \ as\ ARL_h \rightarrow \infty.
	\label{ref:subopt_SADD}
\end{align}
The test statistics $g_k$ is compared with the threshold $\log(ARL_h)$ as before.



\section{Main Results}
\label{sec:main_results}
We derive the expressions of the probability distributions, KLD and $\Delta LQG$ to evaluate the performance of the proposed detector analytically. We first state the theorems for the general MIMO systems in Sub-section~\ref{subsec:mimo_system}, and then simplify the theorems for the MISO systems in Subsection~\ref{subsec:miso_system} to acquire better structural understanding. The technique to optimize the ${\bf \Sigma}_e$ to achieve minimum SADD for a given upper bound on the $\Delta LQG$ is illustrated in Subsection~\ref{subsec:opt_e}.
\subsection{Multiple Input Multiple Output Systems}
\label{subsec:mimo_system}
\begin{myth}\label{th:opt_cusum}
	The optimal CUSUM test to detect the deception attack given by (\ref{eqn:hidden_states_main}) will take the following form,
	\begin{align}
		&gd_k=\max\left(0,gd_{k-1}+\log\frac{f_{{\widetilde {\bf \gamma}_k}}\left({{ \bar {\bf \gamma}}_k}|\left\{\bar \gamma\right\}_1^{k-1},\left\{{\bf e}\right\}_1^{k-1}\right)}{f_{{ {\bf \gamma_k}}}\left({{\bar {\bf \gamma}}_k}\right)}\right),
		\label{eqn:th_cusum_opt} \\
		&\text{where } {\bar \gamma}_k={ \gamma}_k  \text{ before attack, and }  {\bar \gamma}_k={\widetilde \gamma}_k \text{ after attack}, \nonumber
	\end{align}
	\begin{align}
		&\left\{{{\widetilde {\bf \gamma}}_k}|\left\{\bar \gamma\right\}_1^{k-1},\left\{{\bf e}\right\}_1^{k-1}\right\} \nonumber \\
		& \sim \mathcal{N}\left({\bf \mu}_{{\widetilde {\bf \gamma}_k}|\left\{\bar \gamma\right\}_1^{k-1},\left\{{\bf e}\right\}_1^{k-1}},{\bf \Sigma}_{{\widetilde {\bf \gamma}_k}|\left\{\bar \gamma\right\}_1^{k-1},\left\{{\bf e}\right\}_1^{k-1}}\right), \cr
		&{\bf \mu}_{{\widetilde {\bf \gamma}_k}|\left\{\bar \gamma\right\}_1^{k-1},\left\{{\bf e}\right\}_1^{k-1}} =  \cr 
		& \begin{cases}{\bf A}_a{\bf z}_{k-1}-{\bf C}\left({\bf A}+ {\bf B}{\bf L} \right){\bf {\hat x}}^F_{k-1|k-1} - {\bf C}{\bf B}{\bf e}_{k-1}, & k \ge \nu \\{\bf A}_a{\bf y}_{k-1}-{\bf C}\left({\bf A}+ {\bf B}{\bf L} \right){\bf {\hat x}}_{k-1|k-1} - {\bf C}{\bf B}{\bf e}_{k-1}, & k < \nu\end{cases} \label{eqn:f_mean_attack_dept} \\
		& {\bf \Sigma}_{{\widetilde {\bf \gamma}_k}|\left\{\bar \gamma\right\}_1^{k-1},\left\{{\bf e}\right\}_1^{k-1}}={\bf Q}_a, \label{eqn:f_gamma_attack_dept} \text{ and } \\
		& { {\bf \gamma}}_k \sim\mathcal{N}\left({\bf 0},{\bf \Sigma}_{{ {\bf \gamma}}} \right), \nonumber \\
		& {\bf \Sigma}_{{\bf \gamma}}={\bf C}{\bf P}{\bf C}^T+{\bf R}.
		\label{eqn:f_gamma_normal}
	\end{align}
\end{myth}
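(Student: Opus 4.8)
The plan is to establish the three claims of the theorem in order: (i) reduce the joint innovation--watermark CUSUM recursion (\ref{eqn:cusum_opt}) to the innovation-only form (\ref{eqn:th_cusum_opt}); (ii) derive the post-attack conditional law of $\widetilde{\bf \gamma}_k$ given the past, i.e.\ (\ref{eqn:f_mean_attack_dept})--(\ref{eqn:f_gamma_attack_dept}); and (iii) recall the standard steady-state law of the benign innovation ${\bf \gamma}_k$, i.e.\ (\ref{eqn:f_gamma_normal}).

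For step (i), I would use that ${\bf e}_{k-1}\sim\mathcal{N}({\bf 0},{\bf\Sigma}_e)$ is i.i.d.\ and, by the remark following (\ref{eqn:gamma_e_attack}), independent of $\{\bar\gamma\}_1^{k-1}$ and $\{{\bf e}\}_1^{k-2}$ under both hypotheses. Hence the dependent joint density factors as $f_{\widetilde{\bf \gamma}_k,{\bf e}_{k-1}}(\cdot\,|\{\bar\gamma\}_1^{k-1},\{{\bf e}\}_1^{k-2})=f_{\widetilde{\bf \gamma}_k}(\cdot\,|\{\bar\gamma\}_1^{k-1},\{{\bf e}\}_1^{k-1})\,f_{{\bf e}_{k-1}}(\cdot)$, where the conditioning set has grown to $\{{\bf e}\}_1^{k-1}$ because $\{{\bf e}\}_1^{k-2}\cup\{{\bf e}_{k-1}\}=\{{\bf e}\}_1^{k-1}$, while the benign joint density factors as $f_{{\bf \gamma}_k,{\bf e}_{k-1}}(\cdot)=f_{{\bf \gamma}_k}(\cdot)\,f_{{\bf e}_{k-1}}(\cdot)$ since ${\bf \gamma}_k$ and ${\bf e}_{k-1}$ are jointly Gaussian and uncorrelated, hence independent. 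The common factor $f_{{\bf e}_{k-1}}(\cdot)$ cancels in the log-likelihood ratio, turning (\ref{eqn:cusum_opt}) into (\ref{eqn:th_cusum_opt}).

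For step (ii), the core of the proof, I would start from (\ref{eqn:gamma_e_attack}) and substitute the attack model (\ref{eqn:hidden_states_main}) to obtain, for $k\ge\nu$,
\begin{align}
	\widetilde{\bf \gamma}_k={\bf A}_a{\bf z}_{k-1}-{\bf C}({\bf A}+{\bf B}{\bf L})\hat{\bf x}^F_{k-1|k-1}-{\bf C}{\bf B}{\bf e}_{k-1}+{\bf w}_{a,k-1}. \nonumber
\end{align}
I would then argue that every term on the right except ${\bf w}_{a,k-1}$ is measurable with respect to $\sigma(\{\bar\gamma\}_1^{k-1},\{{\bf e}\}_1^{k-1})$: ${\bf e}_{k-1}$ is in the conditioning set by construction, while $\hat{\bf x}^F_{k-1|k-1}$ and ${\bf z}_{k-1}$ are reconstructed recursively from the observed innovations and watermarks via $\hat{\bf x}^F_{j|j-1}=({\bf A}+{\bf B}{\bf L})\hat{\bf x}^F_{j-1|j-1}+{\bf B}{\bf e}_{j-1}$, $\hat{\bf x}^F_{j|j}=\hat{\bf x}^F_{j|j-1}+{\bf K}\bar\gamma_j$, and ${\bf z}_j=\bar\gamma_j+{\bf C}\hat{\bf x}^F_{j|j-1}$ (using the continuity of the Kalman recursion across the attack instant, so that $\hat{\bf x}^F_{\nu|\nu-1}=\hat{\bf x}_{\nu|\nu-1}$), an induction anchored at the benign steady-state filter. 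Since ${\bf w}_{a,k-1}\sim\mathcal{N}({\bf 0},{\bf Q}_a)$ is i.i.d.\ and independent of that entire $\sigma$-algebra — it enters only ${\bf z}_k$ and later, while $\{\bar\gamma\}_1^{k-1}$ depends on ${\bf w}_a$ only up to index $k-2$ and on noises independent of ${\bf w}_a$ — the conditional law of $\widetilde{\bf \gamma}_k$ is Gaussian with the displayed vector as mean and ${\bf Q}_a$ as covariance, which is (\ref{eqn:f_mean_attack_dept})--(\ref{eqn:f_gamma_attack_dept}). The case $k<\nu$ is identical in form: the detector applies the same attack-model one-step predictor to the data it has actually observed, so ${\bf z}_{k-1}$ and $\hat{\bf x}^F_{k-1|k-1}$ are simply replaced by the realized ${\bf y}_{k-1}$ and $\hat{\bf x}_{k-1|k-1}$.

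For step (iii), I would invoke the standard fact that at steady state the Kalman innovation ${\bf \gamma}_k$ of (\ref{eqn:gamma_e}) is a zero-mean white Gaussian sequence with covariance ${\bf C}{\bf P}{\bf C}^T+{\bf R}$ and, being serially independent and uncorrelated with $\{{\bf e}\}$, has conditional density equal to its marginal — which is why the denominator in (\ref{eqn:th_cusum_opt}) carries no conditioning — giving (\ref{eqn:f_gamma_normal}). I expect the main obstacle to be step (ii): cleanly establishing the recursive reconstructibility of $\hat{\bf x}^F_{k-1|k-1}$ and ${\bf z}_{k-1}$ from $\{\bar\gamma\}_1^{k-1}$ and $\{{\bf e}\}_1^{k-1}$ (so that the stated conditional mean is a bona fide function of the conditioning variables), together with the bookkeeping needed to verify independence of ${\bf w}_{a,k-1}$ from that $\sigma$-algebra and to handle the benign-to-attack boundary at $k=\nu$.
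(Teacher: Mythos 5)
Your proposal is correct and follows essentially the same route as the paper's proof: cancel the common watermarking factor $f_{{\bf e}_{k-1}}(\cdot)$ in the likelihood ratio using independence, substitute the attack model (\ref{eqn:hidden_states_main}) into (\ref{eqn:gamma_e_attack}) to isolate ${\bf w}_{a,k-1}$, and read off the conditional mean and covariance ${\bf Q}_a$ after establishing that the remaining terms are functions of the conditioning variables (the paper does this via the explicit recursion (\ref{eqn:xf_k_k_apndx_2}) with the stable initial-condition term dropped, which is the same measurability argument you make by recursive reconstruction).
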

\begin{proof}[Proof]
	The proof of Theorem~\ref{th:opt_cusum} is provided in Appendix~\ref{apdx:opt_cusum}.
\end{proof}
\begin{remark}
	The likelihood ratio in (\ref{eqn:th_cusum_opt}) will be evaluated using the innovation signal ${\bar \gamma}_k$ from the Kalman filter. ${\bar \gamma}_k = { \gamma}_k$ if $k< \nu$, and it will change automatically to ${\bar \gamma}_k = { \widetilde \gamma}_k$ if $k \ge \nu$ without any intervension from the defender. Similarly, ${\bf y}_k$ and ${\bf {\hat x}}_{k-1|k-1}$ will change to ${\bf z}_k$ and ${\bf {\hat x}^F}_{k-1|k-1}$, respectively, after the attack, as given in (\ref{eqn:f_mean_attack_dept}). However, the attacker plays an active role by replacing the true observation ${\bf y}_k$ by the fake data ${\bf z}_k$ at $k \ge \nu$. 
	%
\end{remark}
\begin{remark}
	The optimal CUSUM test utilising the dependent distributions of the innovation signals before and after an attack is performed employing Theorem~\ref{th:opt_cusum}. The innovation signal $\gamma_k$ before an attack is iid, and uncorrelated to the watermarking signal $\bf{e}_{k-1}$. Therefore, the non-dependent distribution is used in (\ref{eqn:th_cusum_opt}) for $\gamma_k$. On the other hand, the innovation signal after an attack ${\widetilde \gamma}_k$ is dependent on its previous values and watermarking signal values. Therefore, the dependent distribution of ${\widetilde \gamma}_k$ is used in (\ref{eqn:th_cusum_opt}), and the derived dependent mean and covariance are given in (\ref{eqn:f_mean_attack_dept})-(\ref{eqn:f_gamma_attack_dept}). The dependent variance is fixed. However, the dependent mean is changing for every time step depending on the previous measurement, estimated state and watermarking signal values. 
\end{remark}
\begin{corollary} \label{cor:subopt_cusum}
	The sub-optimal CUSUM test using the non-conditional distributions to detect the deception attack given by (\ref{eqn:hidden_states_main}) will take the following form,
	\begin{align}
		&g_k=\max\left(0,g_{k-1}+\log\frac{f_{{\widetilde {\bf \gamma}_k},{\bf e}_{k-1}}\left({{\bar {\bf \gamma}}_k,{\bf e}_{k-1}}\right)}{f_{{ {\bf \gamma_k}},{\bf e}_{k-1}}\left({{\bar {\bf \gamma}}_k,{\bf e}_{k-1}}\right)}\right),
		\label{eqn:corr_cusum_subopt} \\
		&\text{where }{\bar \gamma}_k={ \gamma}_k  \text{ before attack, and }  {\bar \gamma}_k={\widetilde \gamma}_k \text{ after attack}, \cr
		& {\bf \gamma}_{e,k}=\left[{\bf \gamma}_k^T, {\bf e}_{k-1}^T \right]^T  \sim{\cal{N}}\left( {\bf 0},{\bf \Sigma}_{{\gamma}_e} \right), \nonumber \\
		&\text{where } {\bf \Sigma}_{\gamma_e}=\begin{bmatrix}{\bf \Sigma}_\gamma & {\bf 0}_{m\times p} \\{\bf 0}_{p\times m} & {\bf \Sigma}_{e} \end{bmatrix}, \label{eqn:sgima_sq_gamma_e} \text{ and } \\
		& {\bf \widetilde \gamma}_{e,k}=\left[{\bf \widetilde \gamma}_k^T, {\bf e}_{k-1}^T \right]^T \sim{\cal{N}}\left( {\bf 0},{{\bf \Sigma}_{{\widetilde \gamma}_e}}\right), \nonumber
	\end{align}
	\begin{align}	
		&\text{where } {\bf \Sigma}_{\widetilde \gamma_e}=\begin{bmatrix}{\bf \Sigma}_{\widetilde\gamma} & - {\bf C} {\bf B} {\bf \Sigma}_e \\ - {\bf \Sigma}_e{\bf B}^T {\bf C}^T   & {\bf \Sigma}_{e} \end{bmatrix} \label{eqn:sgima_sq_gamma_e_attack}.
	\end{align}
\end{corollary}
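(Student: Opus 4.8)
The plan is to dispatch the three claims of Corollary~\ref{cor:subopt_cusum} in turn. The recursion (\ref{eqn:corr_cusum_subopt}) itself requires almost no argument: it is the generic sub-optimal CUSUM update (\ref{eqn:cusum_subopt}) specialised to the test statistic $X_k={\bf \gamma}_{e,k}=[{\bf \gamma}_k^T,{\bf e}_{k-1}^T]^T$, and because the plant and the attacked estimator have been running for a long time (or, failing exact stationarity, by the asymptotic-stationarity property invoked in Section~\ref{sec:phy_watermarking}) the numerator and denominator are the two \emph{stationary} joint densities of $(\widetilde{\bf \gamma}_k,{\bf e}_{k-1})$ and $({\bf \gamma}_k,{\bf e}_{k-1})$, respectively. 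So the real work is to identify those densities, \ie, to establish (\ref{eqn:sgima_sq_gamma_e}) and (\ref{eqn:sgima_sq_gamma_e_attack}). In both cases the relevant vector is an affine function of the jointly Gaussian primitives (${\bf w}_k$, ${\bf v}_k$, ${\bf e}_k$, ${\bf w}_{a,k}$ and the Gaussian stationary initial conditions), so Gaussianity is automatic and only the first two moments need to be computed.

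For the no-attack law I would start from (\ref{eqn:gamma_e}): ${\bf \gamma}_k$ is an affine function of the one-step estimation error ${\bf x}_{k-1}-{\bf \hat x}_{k-1|k-1}$, of ${\bf w}_{k-1}$ and of ${\bf v}_k$. The standard steady-state Kalman filter facts give $E[{\bf \gamma}_k]={\bf 0}$ and $E[{\bf \gamma}_k{\bf \gamma}_k^T]={\bf C}{\bf P}{\bf C}^T+{\bf R}={\bf \Sigma}_\gamma$ with ${\bf P}$ the solution of (\ref{eqn:P}), while ${\bf e}_{k-1}\sim\mathcal{N}({\bf 0},{\bf \Sigma}_e)$ by construction. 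For the off-diagonal block I would note that the estimation error ${\bf x}_{k-1}-{\bf \hat x}_{k-1|k-1}$ is independent of the whole input history (the watermark perturbs ${\bf x}$ and ${\bf \hat x}$ by the same amount, so it cancels from the error), and ${\bf w}_{k-1}$, ${\bf v}_k$ are independent of ${\bf e}_{k-1}$; hence ${\bf \gamma}_k$ and ${\bf e}_{k-1}$ are uncorrelated, and being jointly Gaussian they are independent, which yields the block-diagonal ${\bf \Sigma}_{\gamma_e}$ of (\ref{eqn:sgima_sq_gamma_e}).

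For the attacked law I would use (\ref{eqn:gamma_e_attack}), $\widetilde{\bf \gamma}_k={\bf z}_k-{\bf C}({\bf A}+{\bf B}{\bf L}){\bf \hat x}^F_{k-1|k-1}-{\bf C}{\bf B}{\bf e}_{k-1}$. The $(1,1)$ block of ${\bf \Sigma}_{\widetilde\gamma_e}$ is the stationary covariance of $\widetilde{\bf \gamma}_k$, denoted ${\bf \Sigma}_{\widetilde\gamma}$ (computed separately), the $(2,2)$ block is ${\bf \Sigma}_e$, and the mean is ${\bf 0}$ because ${\bf z}_k$, ${\bf \hat x}^F_{k-1|k-1}$ and ${\bf e}_{k-1}$ are all zero-mean at steady state. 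The cross-covariance is then obtained term by term: ${\bf z}_k$ is generated by the attacker's autonomous recursion (\ref{eqn:hidden_states_main}), driven only by ${\bf w}_{a,\cdot}$, hence independent of the watermark; ${\bf \hat x}^F_{k-1|k-1}$ is, through (\ref{eqn:xF_k_k_1})--(\ref{eqn:gamma_k_attack}), a deterministic function of $\{{\bf z}_i\}_{i\le k-1}$ and $\{{\bf u}_i\}_{i\le k-2}$, and therefore of $\{{\bf e}_i\}_{i\le k-2}$ only, so it is uncorrelated with ${\bf e}_{k-1}$; consequently only the last term contributes, giving $E[\widetilde{\bf \gamma}_k{\bf e}_{k-1}^T]=-{\bf C}{\bf B}{\bf \Sigma}_e$ and, by transposition, $E[{\bf e}_{k-1}\widetilde{\bf \gamma}_k^T]=-{\bf \Sigma}_e{\bf B}^T{\bf C}^T$, which is precisely (\ref{eqn:sgima_sq_gamma_e_attack}).

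The main obstacle I anticipate is not a single hard computation but keeping the (conditional-)independence bookkeeping honest: verifying that ${\bf \hat x}^F_{k-1|k-1}$ genuinely carries no ${\bf e}_{k-1}$ dependence, and --- more delicately --- that the attacked estimator recursion (\ref{eqn:xF_k_k_1})--(\ref{eqn:gamma_k_attack}) has actually reached a steady state so that a single stationary covariance ${\bf \Sigma}_{\widetilde\gamma}$, and hence a single stationary joint density, is the correct object to feed the CUSUM; if exact stationarity is unavailable one falls back on the asymptotic-stationarity argument already used in Section~\ref{sec:phy_watermarking}. I would also record the minor point that in (\ref{eqn:corr_cusum_subopt}) the same realisation ${\bar {\bf \gamma}}_k$ --- equal to ${\bf \gamma}_k$ for $k<\nu$ and to $\widetilde{\bf \gamma}_k$ for $k\ge\nu$ --- is substituted into both densities, so no separate treatment of the pre-attack regime is needed.
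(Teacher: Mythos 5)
Your proposal is correct and follows essentially the same route as the paper: the paper's own proof of this corollary consists of exactly the cross-covariance computation $E[\widetilde{\bf \gamma}_k{\bf e}_{k-1}^T]=-{\bf C}{\bf B}{\bf \Sigma}_e$, justified by the uncorrelatedness of ${\bf e}_{k-1}$ with ${\bf z}_k$ and ${\hat{\bf x}}^F_{k-1|k-1}$, which is the heart of your argument. The remaining blocks you verify (zero means, ${\bf \Sigma}_\gamma={\bf C}{\bf P}{\bf C}^T+{\bf R}$, the block-diagonal no-attack covariance, and ${\bf \Sigma}_{\widetilde\gamma}$) are treated by the paper as already established in Theorem~\ref{th:opt_cusum} and Lemma~\ref{lemma:sigma_gammas}, so your write-up is simply a more self-contained version of the same proof.
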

\begin{proof}[Proof]
	The proof of Corollary~\ref{cor:subopt_cusum} is provided in Appendix~\ref{apdx:subopt_cusum}.
\end{proof}
\begin{remark}
	Both the test statistics $gd_k$ and $g_k$ will be close to zero during the normal operation, and they will gradually increase after the attack at every time step. 
\end{remark}
\begin{remark}
	For the sub-optimal CUSUM test, the non-dependent and asymptotically stationary distributions of ${\gamma}_k$ and ${\widetilde \gamma}_k$ are used. Such a test can be applied when designing the optimal CUSUM test is not feasible, \eg, replay attack detection as discussed in \cite{replay_attack}. Also, for the optimal CUSUM test, the dependent mean needs to be evaluated at every time step, which increases the computational complexity compared to the sub-optimal CUSUM test. 
\end{remark}

%
%
%
%
%

\begin{lemma}\label{lemma:sigma_gammas}
	The covariance matrix ${\bf \Sigma}_{\widetilde \gamma}$ of the innovation signal ${\widetilde \gamma}$ after the attack  will take the following form,
	\begin{align}
		{\bf \Sigma}_{\widetilde \gamma}&={\bf E}_{zz}(0)-{\bf C}({\bf A}+{\bf B{\bf L)}}{\bf E}_{xz}(-1) \cr
		&-\left[{\bf C}({\bf A}+{\bf B{\bf L)}}{\bf E}_{xz}(-1) \right]^T+{\bf C}{\bf B}{\bf \Sigma}_e{\bf B}^T{\bf C}^T  \cr
		&+{\bf C}({\bf A}+{\bf B}{\bf L}){\bf \Sigma}_{x^Fz}({\bf A}+{\bf B}{\bf L})^T{\bf C}^T \cr
		&+{\bf C}({\bf A}+{\bf B}{\bf L}){\bf \Sigma}_{x^Fe}({\bf A}+{\bf B}{\bf L})^T{\bf C}^T,
		\label{eqn:sigma_gamma_attack} \\
		&\text{where }{\bf E}_{xz}(-1)= \sum_{i=0}^\infty\mathcal{A}^{i}{\bf K}{\bf A}_a^{i+1}{\bf E}_{zz}\left(0\right) \label{eqn:Exz1_original}
	\end{align}
	and ${\bf E}_{zz}(0) = E\left[{\bf z}_k {\bf z}_k^T\right]$. ${\bf \Sigma}_{x^Fz}$ and ${\bf \Sigma}_{x^Fe}$ are the solutions to the following Lyapunov equations,
	\begin{align}
		&\mathcal{A}{\bf \Sigma}_{x^Fz}\mathcal{A}^T-{\bf \Sigma}_{x^Fz}+{\bf K}{\bf E}_{zz}(0){\bf K}^T+\mathcal{A}{\bf E}_{xz}(-1){\bf K}^T \cr
		&+\left(\mathcal{A}{\bf E}_{xz}(-1){\bf K}^T\right)^T = 0 \text{, and} \label{eqn:ExFxF_th1p1} \ \\
		&\mathcal{A}{\bf \Sigma}_{x^Fe}\mathcal{A}^T-{\bf \Sigma}_{x^Fe}+\left({\bf I}_n-{\bf K}{\bf C}\right){\bf B}{\bf \Sigma}_e{\bf B}^T\left({\bf I}_n-{\bf K}{\bf C}\right)^T = 0. \cr
		\label{eqn:ExFxF_th1p2}
	\end{align}
	Here $\mathcal{A} =\left({\bf I}_n-{\bf K}{\bf C}\right)\left({\bf A}+{\bf B}{\bf L}\right)$, which is assumed to be strictly stable. ${\bf I}_n$ is a identity matrix of size $n \times n$.
\end{lemma}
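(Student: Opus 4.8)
The plan is to propagate second moments through the closed-loop Kalman-filter dynamics under attack. First I would substitute ${\bf u}_{k-1}={\bf L}\hat{\bf x}^F_{k-1|k-1}+{\bf e}_{k-1}$ into (\ref{eqn:xF_k_k_1})--(\ref{eqn:gamma_k_attack}) to obtain the recursion
\[
\hat{\bf x}^F_{k|k}=\mathcal{A}\hat{\bf x}^F_{k-1|k-1}+({\bf I}_n-{\bf K}{\bf C}){\bf B}{\bf e}_{k-1}+{\bf K}{\bf z}_k ,
\]
with $\mathcal{A}=({\bf I}_n-{\bf K}{\bf C})({\bf A}+{\bf B}{\bf L})$, which is Schur by assumption. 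Together with the stability of ${\bf A}_a$ needed for (\ref{eqn:czo}) to have a solution, this guarantees that $\hat{\bf x}^F_{k|k}$, ${\bf z}_k$, and hence $\widetilde\gamma_k$, are asymptotically zero-mean and jointly stationary, so ${\bf \Sigma}_{\widetilde\gamma}=E[\widetilde\gamma_k\widetilde\gamma_k^T]$ is the well-defined steady-state value. Writing $\Phi\triangleq{\bf A}+{\bf B}{\bf L}$, I would then expand $\widetilde\gamma_k={\bf z}_k-{\bf C}\Phi\hat{\bf x}^F_{k-1|k-1}-{\bf C}{\bf B}{\bf e}_{k-1}$ from (\ref{eqn:gamma_e_attack}) into its nine second-moment terms.

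Two independence facts handle the bookkeeping: ${\bf e}_{k-1}$ is sampled independently of the fake-measurement process $\{{\bf z}_j\}$, and $\hat{\bf x}^F_{k-1|k-1}$ is a function of $\{{\bf e}_i\}_{i\le k-2}$ and $\{{\bf z}_i\}_{i\le k-1}$ only, hence is also independent of ${\bf e}_{k-1}$; all processes are zero-mean. Consequently the four cross terms carrying $E[{\bf e}_{k-1}{\bf z}_k^T]$ or $E[{\bf e}_{k-1}(\hat{\bf x}^F_{k-1|k-1})^T]$ (or their transposes) vanish, leaving ${\bf E}_{zz}(0)$, ${\bf C}{\bf B}{\bf \Sigma}_e{\bf B}^T{\bf C}^T$, ${\bf C}\Phi\,E[\hat{\bf x}^F_{k-1|k-1}(\hat{\bf x}^F_{k-1|k-1})^T]\,\Phi^T{\bf C}^T$, and the symmetric pair $-{\bf C}\Phi\,E[\hat{\bf x}^F_{k-1|k-1}{\bf z}_k^T]$ together with its transpose. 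I would define ${\bf E}_{xz}(-1)\triangleq E[\hat{\bf x}^F_{k-1|k-1}{\bf z}_k^T]$, matching that cross term, and then split $\hat{\bf x}^F_{k|k}=\xi_k+\eta_k$ into the ${\bf z}$-driven part $\xi_k=\mathcal{A}\xi_{k-1}+{\bf K}{\bf z}_k$ and the ${\bf e}$-driven part $\eta_k=\mathcal{A}\eta_{k-1}+({\bf I}_n-{\bf K}{\bf C}){\bf B}{\bf e}_{k-1}$. Since $\{{\bf z}_j\}$ and $\{{\bf e}_i\}$ are independent zero-mean processes, $E[\xi_k\eta_k^T]=0$, so $E[\hat{\bf x}^F_{k|k}(\hat{\bf x}^F_{k|k})^T]={\bf \Sigma}_{x^Fz}+{\bf \Sigma}_{x^Fe}$ with ${\bf \Sigma}_{x^Fz}\triangleq E[\xi_k\xi_k^T]$ and ${\bf \Sigma}_{x^Fe}\triangleq E[\eta_k\eta_k^T]$, and $E[\hat{\bf x}^F_{k-1|k-1}{\bf z}_k^T]=E[\xi_{k-1}{\bf z}_k^T]$. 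Substituting reduces the expansion to exactly (\ref{eqn:sigma_gamma_attack}).

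It then remains to evaluate the three ingredients. For ${\bf \Sigma}_{x^Fe}$, the forcing ${\bf e}_{k-1}$ is white with covariance ${\bf \Sigma}_e$, so $E[\eta_k\eta_k^T]$ satisfies the ordinary discrete Lyapunov equation (\ref{eqn:ExFxF_th1p2}). For ${\bf E}_{xz}(-1)$, I would unroll $\xi_{k-1}=\sum_{i\ge0}\mathcal{A}^i{\bf K}{\bf z}_{k-1-i}$ — the series converging geometrically because $\mathcal{A}$ is Schur — and insert the autocovariance of the AR(1) recursion (\ref{eqn:hidden_states_main}), $E[{\bf z}_{k-1-i}{\bf z}_k^T]=[{\bf A}_a^{i+1}{\bf E}_{zz}(0)]^T$ (using symmetry of ${\bf E}_{zz}(0)$), to arrive at the series representation (\ref{eqn:Exz1_original}). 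Finally, for ${\bf \Sigma}_{x^Fz}$ I would compute $E[\xi_k\xi_k^T]$ directly from $\xi_k=\mathcal{A}\xi_{k-1}+{\bf K}{\bf z}_k$; the crux, and the main obstacle of the whole proof, is that ${\bf z}_k$ is coloured rather than white, so $\xi_{k-1}$ and ${\bf z}_k$ are correlated and no plain Lyapunov equation applies — retaining that correlation yields the extra terms $\mathcal{A}{\bf E}_{xz}(-1){\bf K}^T$, its transpose, and ${\bf K}{\bf E}_{zz}(0){\bf K}^T$, which gives the modified Lyapunov equation (\ref{eqn:ExFxF_th1p1}). Collecting (\ref{eqn:Exz1_original}), (\ref{eqn:ExFxF_th1p1}) and (\ref{eqn:ExFxF_th1p2}) into the reduced expression from the previous paragraph completes the proof; the final check is that every replacement of a time-$k$ moment by a time-$(k-1)$ moment is justified by stationarity, which holds under the Schur assumption on $\mathcal{A}$.
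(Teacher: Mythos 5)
Your proposal is correct and follows essentially the same route as the paper's proof: expand $\widetilde\gamma_k$ from (\ref{eqn:gamma_e_attack}) into its second moments, eliminate the ${\bf e}_{k-1}$ cross terms by independence, split $\hat{\bf x}^F_{k|k}$ into its ${\bf z}$-driven and ${\bf e}$-driven parts (the paper's decomposition ${\bf E}_{x^Fx^F}(0)={\bf \Sigma}_{x^Fz}+{\bf \Sigma}_{x^Fe}$), and obtain (\ref{eqn:ExFxF_th1p2}) as a standard Lyapunov equation, (\ref{eqn:ExFxF_th1p1}) as a Lyapunov equation corrected for the $\xi_{k-1}$--${\bf z}_k$ correlation, and (\ref{eqn:Exz1_original}) by unrolling the geometric series. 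The only point worth noting is that your careful evaluation $E[{\bf z}_{k-1-i}{\bf z}_k^T]=[{\bf A}_a^{i+1}{\bf E}_{zz}(0)]^T={\bf E}_{zz}(0)({\bf A}_a^T)^{i+1}$ reproduces (\ref{eqn:Exz1_original}) exactly only when the lag autocovariances ${\bf A}_a^{i+1}{\bf E}_{zz}(0)$ are symmetric --- an identification the paper itself makes implicitly by setting ${\bf E}_{zz}(-k_0)={\bf E}_{zz}(k_0)$ --- so your derivation is, if anything, the more precise of the two.
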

\begin{proof}[Proof]
	The proof of Lemma~\ref{lemma:sigma_gammas} is provided in Appendix~\ref{apdx:sigma_sq_gamma_attack}.
\end{proof}
\begin{remark}
	Lemma~\ref{lemma:sigma_gammas} provides an analytical formula to derive the value of the non-dependent variance ${\bf \Sigma}_{\widetilde \gamma}$ of the innovation signal ${\widetilde \gamma}$ under an attack. ${\bf \Sigma}_{\widetilde \gamma}$ is used for the sub-optimal CUSUM test, and derivation of the SADD under both the tests.
\end{remark}
\begin{remark}
	Since $\mathcal{A}$ is assumed to be strictly stable, the Lyapunov equations of (\ref{eqn:ExFxF_th1p1}) and (\ref{eqn:ExFxF_th1p2}) will have unique solutions. 
	If $\mathcal{A}$ and ${\bf A}_a$ are not diagonalizable, then ${\bf E}_{xz}\left(-1\right)$ can be evaluated numerically by taking a large number of terms for the summation of (\ref{eqn:Exz1_original}), until the rest of the terms become negligible. 
\end{remark}
\begin{remark}
	The attacker's system parameters ${\bf A}_a$ and ${\bf Q}_a$ can be estimated from the observations. 
\end{remark}
\begin{corollary} \label{cor:Exzm1}
	With the assumption that $\mathcal{A}$ and ${\bf A}_a$ are diagonalizable, ${\bf E}_{xz}(-1)$ will take the following form
	\begin{align}
		&{\bf E}_{xz}(-1)={\bf U}_{\mathcal{A}}{\bf T}_a{\bf U}_a^{-1}{\bf A}_a{\bf E}_{zz}(0)  \label{eqn:Exz1} .
	\end{align}
	Here ${\bf U}_{\mathcal{A}}$ is the eigenvector matrix of $\mathcal{A}$, see (\ref{eqn:svdscriptA}). ${\bf \Sigma}_{\mathcal{A}} = diag\left[ \lambda_{\mathcal{A},1}\ \lambda_{\mathcal{A},2}\ \cdots \right]$ is the eigenvalue matrix of ${\mathcal{A}}$ with the eigenvalues on its main diagonal. ${\bf U}_{a}$ is the eigenvector matrix of ${\bf A}_a$, see (\ref{eqn:svdAa}). ${\bf \Sigma}_{a} = diag\left[ \lambda_{a,1}\ \lambda_{a,2}\ \cdots \right]$ is the eigenvalue matrix of ${\bf A}_a$ with the eigenvalues on its main diagonal. 
	\begin{align}
		\mathcal{A}&={\bf U}_{\mathcal{A}}{\bf \Sigma}_{\mathcal{A}}{\bf U}_{\mathcal{A}}^{-1}. \label{eqn:svdscriptA} \ \\
		{\bf A}_a&={\bf U}_a{\bf \Sigma}_a{\bf U}_a^{-1}. \label{eqn:svdAa}
	\end{align}
	
	The $ij$-th element of the ${\bf T}_a$ matrix is as follows
	\begin{align}
		\left[{\bf T}_a\right]_{ij}&=\frac{\left[{\bf T}\right]_{ij}}{1-\lambda_{\mathcal{A},i}\lambda_{a,j}},  \label{eqn:Ta_mimo}\ \\
		\text{and } {\bf T}&={\bf U}_{\mathcal{A}}^{-1}{\bf K}{\bf U}_a \label{eqn:T_mimo}.
	\end{align}
\end{corollary}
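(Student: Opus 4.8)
The plan is to start from the series representation (\ref{eqn:Exz1_original}) for ${\bf E}_{xz}(-1)$ already established in Lemma~\ref{lemma:sigma_gammas}, namely ${\bf E}_{xz}(-1)=\sum_{i=0}^{\infty}\mathcal{A}^{i}{\bf K}{\bf A}_a^{i+1}{\bf E}_{zz}(0)$, and to collapse it into closed form by diagonalising both $\mathcal{A}$ and ${\bf A}_a$. First I would substitute the eigendecompositions (\ref{eqn:svdscriptA}) and (\ref{eqn:svdAa}), which give $\mathcal{A}^{i}={\bf U}_{\mathcal{A}}{\bf \Sigma}_{\mathcal{A}}^{i}{\bf U}_{\mathcal{A}}^{-1}$ and ${\bf A}_a^{i+1}={\bf U}_a{\bf \Sigma}_a^{i+1}{\bf U}_a^{-1}$, so that the $i$-th summand is ${\bf U}_{\mathcal{A}}{\bf \Sigma}_{\mathcal{A}}^{i}\bigl({\bf U}_{\mathcal{A}}^{-1}{\bf K}{\bf U}_a\bigr){\bf \Sigma}_a^{i+1}{\bf U}_a^{-1}{\bf E}_{zz}(0)$. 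Recognising ${\bf T}={\bf U}_{\mathcal{A}}^{-1}{\bf K}{\bf U}_a$ from (\ref{eqn:T_mimo}), factoring the constant matrices ${\bf U}_{\mathcal{A}}$ on the left and ${\bf U}_a^{-1}{\bf E}_{zz}(0)$ on the right out of the sum, writing ${\bf \Sigma}_a^{i+1}={\bf \Sigma}_a^{i}{\bf \Sigma}_a$, and using ${\bf \Sigma}_a{\bf U}_a^{-1}={\bf U}_a^{-1}{\bf A}_a$, I arrive at
\[
{\bf E}_{xz}(-1)={\bf U}_{\mathcal{A}}\left(\sum_{i=0}^{\infty}{\bf \Sigma}_{\mathcal{A}}^{i}\,{\bf T}\,{\bf \Sigma}_a^{i}\right){\bf U}_a^{-1}{\bf A}_a{\bf E}_{zz}(0).
\]

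The next step is to evaluate the bracketed matrix series entrywise. Since ${\bf \Sigma}_{\mathcal{A}}$ and ${\bf \Sigma}_a$ are diagonal with diagonal entries $\lambda_{\mathcal{A},p}$ and $\lambda_{a,q}$, one has $\bigl[{\bf \Sigma}_{\mathcal{A}}^{i}{\bf T}{\bf \Sigma}_a^{i}\bigr]_{pq}=(\lambda_{\mathcal{A},p}\lambda_{a,q})^{i}[{\bf T}]_{pq}$, and summing the scalar geometric series over $i$ gives $\bigl[\sum_{i\ge0}{\bf \Sigma}_{\mathcal{A}}^{i}{\bf T}{\bf \Sigma}_a^{i}\bigr]_{pq}=[{\bf T}]_{pq}/(1-\lambda_{\mathcal{A},p}\lambda_{a,q})$, which is exactly the matrix ${\bf T}_a$ defined in (\ref{eqn:Ta_mimo}). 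Substituting this back into the displayed expression yields (\ref{eqn:Exz1}), completing the argument.

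The only delicate points are convergence and the interchange of the summation with the fixed outer matrix multiplications. Here the hypotheses of Lemma~\ref{lemma:sigma_gammas} do the work: $\mathcal{A}$ is assumed strictly stable, so $|\lambda_{\mathcal{A},p}|<1$ for all $p$, and stationarity of ${\bf z}_k$ (so that the Lyapunov equation (\ref{eqn:czo}) admits a solution) forces ${\bf A}_a$ to be Schur-stable, so $|\lambda_{a,q}|<1$; hence $|\lambda_{\mathcal{A},p}\lambda_{a,q}|<1$ for every pair $(p,q)$. This single bound simultaneously ensures that each scalar geometric series converges, that no denominator $1-\lambda_{\mathcal{A},p}\lambda_{a,q}$ vanishes, and that the original matrix series (\ref{eqn:Exz1_original}) converges absolutely --- after which the termwise diagonalisation and the pull-out of the constant factors are fully justified in the finite-dimensional setting. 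I expect this convergence and index bookkeeping to be the main (and essentially the only) obstacle; everything else is routine linear algebra.
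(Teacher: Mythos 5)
Your proposal is correct and follows essentially the same route as the paper's own proof: substitute the eigendecompositions of $\mathcal{A}$ and ${\bf A}_a$ into the series (\ref{eqn:Exz1_original}), identify ${\bf T}={\bf U}_{\mathcal{A}}^{-1}{\bf K}{\bf U}_a$, and sum the resulting entrywise geometric series to obtain ${\bf T}_a$. Your explicit justification of convergence via $|\lambda_{\mathcal{A},p}\lambda_{a,q}|<1$ matches the paper's appeal to the strict stability of $\mathcal{A}$ and ${\bf A}_a$, so nothing is missing.
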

\begin{proof}[Proof]
	Proof of Corollary~\ref{cor:Exzm1} is provided in the Appendix~\ref{apdx:kld_corr}.
\end{proof}
\begin{remark}
	Corollary \ref{cor:Exzm1} provides a way to derive the value of ${\bf E}_{xz}(-1)$ analytically, provided $\mathcal{A}$ and ${\bf A}_a$ are diagonalizable. ${\bf E}_{xz}(-1)$ is used to evaluate ${\bf \Sigma}_{\widetilde \gamma}$. 
\end{remark}
\begin{myth} \label{th:th_kld}
	
	The expected KLD under the optimal CUSUM test $\left(E\left[ D\left(f_{{\widetilde {\bf \gamma}_k}},f_{{ {\bf \gamma}_k}} |\left\{\bar \gamma\right\}_1^{k-1},\left\{{\bf e}\right\}_1^{k-1}\right)\right]\right)$, and the KLD under the sub-optimal CUSUM test $\left(D\left(f_{{\widetilde {\bf \gamma}_k},{\bf e}_{k-1}},f_{{ {\bf \gamma}_k},{\bf e}_{k-1}}\right)\right)$ will be as follows,
	
	\begin{align}
		&E\left[ D\left(f_{{\widetilde {\bf \gamma}_k}},f_{{ {\bf \gamma}_k}} |\left\{\bar \gamma\right\}_1^{k-1},\left\{{\bf e}\right\}_1^{k-1}\right)\right]  \cr 
		&=\frac{1}{2}\left\{tr\left({\bf \Sigma}_\gamma^{-1} {\bf \Sigma}_{\widetilde \gamma}\right) -m - \log\frac{\mid{\bf Q}_a\mid}{\mid{\bf \Sigma}_{\gamma}\mid}\right\} \text{, and}
		\label{eqn:opt_kld} \ \\
		&D\left(f_{{\widetilde {\bf \gamma}_k},{\bf e}_{k-1}},f_{{ {\bf \gamma}_k},{\bf e}_{k-1}}\right)  \cr
		&=\frac{1}{2}\left\{tr\left({\bf \Sigma}_\gamma^{-1} {\bf \Sigma}_{\widetilde \gamma}\right) -m - \log\frac{\mid{{\bf \Sigma}_{\widetilde \gamma}}-{\bf C}{\bf B}{\bf \Sigma}_e{\bf B}^T{\bf C}^T\mid}{\mid{\bf \Sigma}_{\gamma}\mid}\right\}. \label{eqn:subopt_kld} \cr
	\end{align}
	
\end{myth}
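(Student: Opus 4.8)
The plan is to reduce both identities to the standard closed form for the Kullback--Leibler divergence between two non-degenerate $d$-dimensional Gaussians,
\begin{align}
	D\left(\mathcal{N}({\bf m}_1,{\bf \Sigma}_1),\mathcal{N}({\bf m}_0,{\bf \Sigma}_0)\right) &= \frac{1}{2}\Big[tr\left({\bf \Sigma}_0^{-1}{\bf \Sigma}_1\right)-d \nonumber \\
	&+({\bf m}_0-{\bf m}_1)^T{\bf \Sigma}_0^{-1}({\bf m}_0-{\bf m}_1)+\log\frac{|{\bf \Sigma}_0|}{|{\bf \Sigma}_1|}\Big], \nonumber
\end{align}
and then to carry out the remaining bookkeeping with the distributions already established in Theorem~\ref{th:opt_cusum}, Corollary~\ref{cor:subopt_cusum}, and Lemma~\ref{lemma:sigma_gammas}.

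For the optimal test I would write ${\bf m}_k$ for the conditional mean ${\bf \mu}_{{\widetilde {\bf \gamma}_k}|\left\{\bar \gamma\right\}_1^{k-1},\left\{{\bf e}\right\}_1^{k-1}}$ of (\ref{eqn:f_mean_attack_dept}). By Theorem~\ref{th:opt_cusum}, conditioned on the past $\widetilde{\bf \gamma}_k\sim\mathcal{N}({\bf m}_k,{\bf Q}_a)$, whereas $f_{{\bf \gamma}_k}=\mathcal{N}({\bf 0},{\bf \Sigma}_\gamma)$ is unconditional; hence the conditional divergence equals $\frac{1}{2}\left[tr({\bf \Sigma}_\gamma^{-1}{\bf Q}_a)-m+{\bf m}_k^T{\bf \Sigma}_\gamma^{-1}{\bf m}_k+\log(|{\bf \Sigma}_\gamma|/|{\bf Q}_a|)\right]$. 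Taking the expectation over the stationary joint law of the past as in (\ref{eqn:I_eq_kld}), the only random term is the quadratic one, and $E\left[{\bf m}_k^T{\bf \Sigma}_\gamma^{-1}{\bf m}_k\right]=tr\left({\bf \Sigma}_\gamma^{-1}E\left[{\bf m}_k{\bf m}_k^T\right]\right)$. Since $\widetilde{\bf \gamma}_k$ is zero-mean and stationary, $E[{\bf m}_k]={\bf 0}$, so $E[{\bf m}_k{\bf m}_k^T]$ equals the covariance of the conditional mean, and the law of total covariance gives ${\bf \Sigma}_{\widetilde\gamma}={\bf Q}_a+E[{\bf m}_k{\bf m}_k^T]$, with ${\bf \Sigma}_{\widetilde\gamma}$ provided by Lemma~\ref{lemma:sigma_gammas}. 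Substituting $E[{\bf m}_k{\bf m}_k^T]={\bf \Sigma}_{\widetilde\gamma}-{\bf Q}_a$ and using $tr({\bf \Sigma}_\gamma^{-1}{\bf Q}_a)+tr\left({\bf \Sigma}_\gamma^{-1}({\bf \Sigma}_{\widetilde\gamma}-{\bf Q}_a)\right)=tr({\bf \Sigma}_\gamma^{-1}{\bf \Sigma}_{\widetilde\gamma})$ yields (\ref{eqn:opt_kld}).

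For the sub-optimal test I would apply the Gaussian divergence formula in dimension $m+p$ with ${\bf \Sigma}_1={\bf \Sigma}_{\widetilde\gamma_e}$ from Corollary~\ref{cor:subopt_cusum} and ${\bf \Sigma}_0={\bf \Sigma}_{\gamma_e}$ from (\ref{eqn:sgima_sq_gamma_e}); both are zero-mean, so the quadratic term vanishes. Since ${\bf \Sigma}_{\gamma_e}$ is block diagonal, ${\bf \Sigma}_{\gamma_e}^{-1}$ is block diagonal with blocks ${\bf \Sigma}_\gamma^{-1}$ and ${\bf \Sigma}_e^{-1}$; multiplying out ${\bf \Sigma}_{\gamma_e}^{-1}{\bf \Sigma}_{\widetilde\gamma_e}$ shows that its lower-right block is ${\bf I}_p$, so $tr({\bf \Sigma}_{\gamma_e}^{-1}{\bf \Sigma}_{\widetilde\gamma_e})=tr({\bf \Sigma}_\gamma^{-1}{\bf \Sigma}_{\widetilde\gamma})+p$. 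For the determinants, $|{\bf \Sigma}_{\gamma_e}|=|{\bf \Sigma}_\gamma|\,|{\bf \Sigma}_e|$, while the Schur-complement identity applied to ${\bf \Sigma}_{\widetilde\gamma_e}$ (eliminating the ${\bf e}_{k-1}$ block) gives $|{\bf \Sigma}_{\widetilde\gamma_e}|=|{\bf \Sigma}_e|\,\big|{\bf \Sigma}_{\widetilde\gamma}-{\bf C}{\bf B}{\bf \Sigma}_e{\bf B}^T{\bf C}^T\big|$, so the $|{\bf \Sigma}_e|$ factors cancel in the log-ratio. Collecting the extra $+p$ against the $-(m+p)$ and substituting the simplified determinant ratio produces (\ref{eqn:subopt_kld}).

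I expect the main obstacle to be the expectation step in the optimal case: one must be careful that the expectation in (\ref{eqn:I_eq_kld}) is over the steady-state joint law of $\left\{\bar\gamma\right\}_1^{k-1},\left\{{\bf e}\right\}_1^{k-1}$, that under the long-run stationarity assumption the conditional mean ${\bf m}_k$ is a genuinely zero-mean random vector, and that the law of total covariance legitimately replaces $E[{\bf m}_k{\bf m}_k^T]$ by ${\bf \Sigma}_{\widetilde\gamma}-{\bf Q}_a$. Once that identity is in hand, both results follow from elementary block-matrix and determinant algebra.
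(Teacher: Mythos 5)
Your proposal is correct and follows essentially the same route as the paper's proof: the standard closed-form KLD between Gaussians, the identity ${\bf \Sigma}_{\widetilde\gamma}={\bf Q}_a+E[{\bf m}_k{\bf m}_k^T]$ (which the paper states as its equation for ${\bf \Sigma}_{\widetilde \gamma}$ in terms of the conditional mean, i.e.\ exactly your law-of-total-covariance step), and the block-diagonal inverse plus Schur-complement determinant factorization with cancellation of $|{\bf \Sigma}_e|$ for the sub-optimal case. No substantive differences.
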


\begin{proof}[Proof]
	The proof of Theorem~\ref{th:th_kld} is provided in Appendix~\ref{apdx:kld_mimo}.
\end{proof}
\begin{corollary}
	The difference between the expected KLD and the KLD is $\log\frac{\mid{{\bf \Sigma}_{\widetilde \gamma}}-{\bf C}{\bf B}{\bf \Sigma}_e{\bf B}^T{\bf C}^T\mid}{\mid{\bf Q}_{a}\mid}$, which corresponds to the optimality gap between the optimal and sub-optimal CUSUM tests. From (\ref{eqn:gamma_attack_apndx}), exploiting suitable independence properties of the involved random processes, it can be shown that ${\bf \Sigma}_{\widetilde \gamma}-{\bf C}{\bf B}{\bf \Sigma}_e{\bf B}^T{\bf C}^T \ge  {\bf Q}_{a}$. By  eigenvalue comparison of the positive semidefinite matrices 
	${\bf \Sigma}_{\widetilde \gamma}-{\bf C}{\bf B}{\bf \Sigma}_e{\bf B}^T{\bf C}^T$ and ${\bf Q}_{a}$,	 we can say ${\mid{{\bf \Sigma}_{\widetilde \gamma}}-{\bf C}{\bf B}{\bf \Sigma}_e{\bf B}^T{\bf C}^T\mid} \ge {\mid{\bf Q}_{a}\mid}$, which ensures the optimality gap is  positive.  
\end{corollary}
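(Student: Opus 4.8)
The plan is to verify the three assertions of the corollary in turn: the closed form of the gap, the positive semidefinite ordering, and the determinant inequality it forces.

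\emph{Step 1 (the gap formula).} I would obtain the gap by subtracting (\ref{eqn:subopt_kld}) from (\ref{eqn:opt_kld}). Both formulas contain the common term $\tfrac12\left(tr\left({\bf \Sigma}_\gamma^{-1}{\bf \Sigma}_{\widetilde\gamma}\right)-m\right)$ and, inside the two log-ratios, the common additive constant $\tfrac12\log|{\bf \Sigma}_\gamma|$; these cancel and leave $\tfrac12\log\frac{|{\bf \Sigma}_{\widetilde\gamma}-{\bf C}{\bf B}{\bf \Sigma}_e{\bf B}^T{\bf C}^T|}{|{\bf Q}_a|}$, i.e.\ the quantity claimed in the corollary (with $\tfrac12$ the prefactor shared by both KLD expressions). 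By (\ref{ref:opt_SADD}) and (\ref{ref:subopt_SADD}) this is precisely the difference of the two SADD exponents, so establishing its sign settles whether the optimal test is at least as fast as the sub-optimal one.

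\emph{Step 2 (the matrix inequality ${\bf \Sigma}_{\widetilde\gamma}-{\bf C}{\bf B}{\bf \Sigma}_e{\bf B}^T{\bf C}^T\ge{\bf Q}_a$).} I would substitute (\ref{eqn:hidden_states_main}) for ${\bf z}_k$ into (\ref{eqn:gamma_e_attack}) to write ${\widetilde\gamma}_k={\bf w}_{a,k-1}+\big({\bf A}_a{\bf z}_{k-1}-{\bf C}({\bf A}+{\bf B}{\bf L}){\hat{\bf x}}^F_{k-1|k-1}\big)-{\bf C}{\bf B}{\bf e}_{k-1}$, consistent with the fact that the \emph{conditional} covariance in Theorem~\ref{th:opt_cusum} is exactly ${\bf Q}_a$ (see (\ref{eqn:f_gamma_attack_dept})) while the conditional mean (\ref{eqn:f_mean_attack_dept}) absorbs the remaining, $\Psi_{k-1}$-measurable part. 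The decisive observation is the causal/independence structure: ${\bf w}_{a,k-1}\sim\mathcal N({\bf 0},{\bf Q}_a)$ is independent of the other two groups, and ${\bf e}_{k-1}$ is independent of both ${\bf z}_{k-1}$ (the fake-data generator never observes the watermark) and ${\hat{\bf x}}^F_{k-1|k-1}$ (the filtered estimate at time $k-1$ is built only from ${\bf e}_{k-2},{\bf e}_{k-3},\dots$, via the recursion (\ref{eqn:xF_k_k_1})--(\ref{eqn:gamma_k_attack}) and the iid property of $\{{\bf e}_k\}$). Hence all cross-covariances vanish and, in the stationary regime guaranteed by Lemma~\ref{lemma:sigma_gammas}, ${\bf \Sigma}_{\widetilde\gamma}={\bf Q}_a+{\bf M}+{\bf C}{\bf B}{\bf \Sigma}_e{\bf B}^T{\bf C}^T$ with ${\bf M}\triangleq\mathrm{Cov}\big({\bf A}_a{\bf z}_{k-1}-{\bf C}({\bf A}+{\bf B}{\bf L}){\hat{\bf x}}^F_{k-1|k-1}\big)\ge{\bf 0}$; subtracting the watermark block gives ${\bf \Sigma}_{\widetilde\gamma}-{\bf C}{\bf B}{\bf \Sigma}_e{\bf B}^T{\bf C}^T={\bf Q}_a+{\bf M}\ge{\bf Q}_a$.

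\emph{Step 3 (determinant monotonicity and conclusion).} Finally I would use monotonicity of the determinant on the PSD cone: if ${\bf X}\ge{\bf Y}\ge{\bf 0}$ then $|{\bf X}|\ge|{\bf Y}|$. If ${\bf Q}_a$ is singular the inequality is trivial; if ${\bf Q}_a>{\bf 0}$, factor ${\bf Q}_a+{\bf M}={\bf Q}_a^{1/2}\big({\bf I}+{\bf Q}_a^{-1/2}{\bf M}{\bf Q}_a^{-1/2}\big){\bf Q}_a^{1/2}$ and note the middle factor has all eigenvalues $\ge1$, so its determinant is $\ge1$; equivalently, Weyl's inequalities give $\lambda_i({\bf Q}_a+{\bf M})\ge\lambda_i({\bf Q}_a)$ for each $i$, whence $|{\bf Q}_a+{\bf M}|\ge|{\bf Q}_a|$. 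Therefore $\log\frac{|{\bf \Sigma}_{\widetilde\gamma}-{\bf C}{\bf B}{\bf \Sigma}_e{\bf B}^T{\bf C}^T|}{|{\bf Q}_a|}\ge0$, the optimality gap is nonnegative, and it is strictly positive whenever ${\bf M}$ has a direction not already present in ${\bf Q}_a$. The main obstacle is Step 2 — making the independence bookkeeping airtight, in particular the vanishing of the cross-covariance between ${\bf e}_{k-1}$ and the pair $({\bf z}_{k-1},{\hat{\bf x}}^F_{k-1|k-1})$, so that $\mathrm{Cov}({\widetilde\gamma}_k)$ splits cleanly into the three nonnegative blocks; this is exactly where the appendix expansion of ${\widetilde\gamma}_k$ and the Lyapunov/stationarity structure of Lemma~\ref{lemma:sigma_gammas} do the real work, after which Steps 1 and 3 are routine.
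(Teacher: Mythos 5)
Your proof is correct and follows the same route the paper indicates: subtract (\ref{eqn:subopt_kld}) from (\ref{eqn:opt_kld}) to isolate the log-determinant ratio, use the decomposition of $\widetilde\gamma_k$ in (\ref{eqn:gamma_attack_apndx}) together with the independence of ${\bf w}_{a,k-1}$ and ${\bf e}_{k-1}$ from the remaining $\Psi_{k-1}$-measurable terms to get ${\bf \Sigma}_{\widetilde\gamma}-{\bf C}{\bf B}{\bf \Sigma}_e{\bf B}^T{\bf C}^T={\bf Q}_a+{\bf M}$ with ${\bf M}\ge{\bf 0}$, and conclude by determinant monotonicity on the PSD cone. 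Your treatment is in fact slightly more careful than the paper's one-line proof: you correctly note the shared prefactor $\tfrac12$ that the corollary's stated gap formula omits, and you handle the singular-${\bf Q}_a$ case explicitly.
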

\begin{proof}[Proof]
	The proof simply follows by subtracting (\ref{eqn:subopt_kld}) from (\ref{eqn:opt_kld}). 
\end{proof}
\begin{remark}
	The expected KLD and the KLD under the optimal and sub-optimal test, respectively, are mostly dependent on the non-dependent variances of the innovation signals ${\bf \Sigma}_{\gamma}$ and ${\bf \Sigma}_{\widetilde \gamma}$ before and after an attack. They also depend on a few system and noise parameters. 
\end{remark}

\begin{remark}
	Instead of taking the joint distribution of the innovation signal and the watermarking signal, if the optimal CUSUM test is performed using the dependent distribution of the innovation signal only, then the expected KLD will take the form of (\ref{eqn:th_kld_only_inno}). While a detailed proof cannot be accommodated due to space constraints, here we use simple intuitive arguments to explain why the expected KLD of (\ref{eqn:th_kld_only_inno}) reduces compared to the optimal KLD using the joint conditional distribution of the innovation signal and the watermarking signal  (\ref{eqn:opt_kld}). An investigation of the KLD expression reveals that the numerator can be described as negative conditional differential entropy, which increases with further conditioning with respect to the watermarking signal, and the denominator (due to the Gaussian property of the distribution of the innovations) can be described as the conditional variance which decreases with further conditioning, thus increasing the KLD overall.  
	The increase in KLD results in quicker attack detection on average due to (\ref{eqn:add_lim}).  Equation (\ref{eqn:th_kld_only_inno}) can be derived following the similar steps given in the Appendix~\ref{apdx:opt_cusum} and Appendix~\ref{apdx:kld_mimo}. However, the detailed proof has been omitted due to the space constraints. 
\end{remark}
\begin{table*}\centering
	\begin{tabular}{  m{15cm} }\hline
		{\begin{flalign}
				&E\left[ D\left(f_{{\widetilde {\bf \gamma}_k}},f_{{ {\bf \gamma}_k}} |\left\{\bar \gamma\right\}_1^{k-1}\right)\right] =\frac{1}{2}\left\{tr\left({\bf \Sigma}_\gamma^{-1} \left({\bf \Sigma}_{\widetilde \gamma}- {\bf E}_\mu-{\bf E}_\mu^T\right)\right) -m - \log\frac{\mid{\bf \Sigma}_{{\widetilde {\bf \gamma}_k}|\left\{\bar \gamma\right\}_1^{k-1}}\mid}{\mid{\bf \Sigma}_{\gamma}\mid}\right\}, \label{eqn:th_kld_only_inno} \\
				& \text{where }{\bf \Sigma}_{{\widetilde {\bf \gamma}_k}|\left\{\bar \gamma\right\}_1^{k-1}} = {\bf Q}_a + \left({\bf A}_a{\bf C}-{\bf C}\left({\bf A} + {\bf B}{\bf L} \right) \right){\bf G}\left({\bf A}_a{\bf C} -{\bf C}\left({\bf A} + {\bf B}{\bf L} \right) \right)^T+{\bf C}{\bf B}{\bf \Sigma}_e{\bf B^T}{\bf C}^T \text{,} \nonumber \\
				&{\bf G} = \sum_{i=2}^{k-1}\left({\bf A} + {\bf B}{\bf L} \right)^{i-1}{\bf B}{\bf \Sigma}_e{\bf B^T}\left[\left({\bf A} + {\bf B}{\bf L} \right)^{i-1} \right]^T, \text{and} \\
				&{\bf E}_\mu=\left({\bf A}_a{\bf C}-{\bf C}\left({\bf A} + {\bf B}{\bf L} \right) \right)\sum_{j=1}^{k-1}\sum_{i=2}^{j+1}\left({\bf A} + {\bf B}{\bf L} \right)^{i-1}{\bf K}{\bf E}_{\gamma e}\left(j-i+1 \right){\bf B}^T\left[\left({\bf A} + {\bf B}{\bf L} \right)^{j-1}\right]^T\left[\left({\bf A}_a{\bf C}-{\bf C}\left({\bf A} \right. \right. \right. \nonumber \\
				& \left. \left. \left. + {\bf B}{\bf L} \right) \right)\right]^T + \left({\bf A}_a-{\bf C}\left({\bf A} + {\bf B}{\bf L} \right){\bf K} \right)\sum_{j=1}^{k-1}{\bf E}_{\gamma e}\left(j \right) {\bf B}^T\left[\left({\bf A} + {\bf B}{\bf L} \right)^{j-1}\right]^T\left[\left({\bf A}_a{\bf C}-{\bf C}\left({\bf A} + {\bf B}{\bf L} \right) \right)\right]^T, \\
				&{\bf E}_{\gamma e}\left(j \right)= \begin{cases}-{\bf C}\left({\bf A} + {\bf B}{\bf L} \right){\cal A}^{j-2}\left({\bf I}_n - {\bf K}{\bf C}\right){\bf B}{\bf \Sigma}_e & \text{if }j >1 \\ {\bf 0} & \text{otherwise.}\end{cases}
		\end{flalign}} \\
		\hline \end{tabular}\label{NS_eqt}\end{table*}

\begin{myth}\label{th:deltaLQG}
	The increase in the LQG cost ($\Delta LQG$) over the optimal LQG cost, when there is no attack, due to the addition of the watermarking signal is related to the watermarking signal covariance matrix  ${\bf \Sigma}_e$ as follows, 
	\begin{align}
		&\Delta LQG=tr\left({\bf H}{\bf \Sigma}_e\right) \label{eqn:deltaLQG} \ \\
		&\text{where }{\bf H} = {\bf B}^T{\bf \Sigma}_L{\bf B}+{\bf U}  \label{eqn:H}
	\end{align}
	and ${\bf \Sigma}_L$ is the solution to the Lyapunov equation
	\begin{align}
		\left({\bf A}+{\bf B}{\bf L}\right)^T{\bf \Sigma}_L\left({\bf A}+{\bf B}{\bf L}\right)-{\bf \Sigma}_L+{\bf L}^T{\bf U}{\bf L} +{\bf W}=0.
		\label{eqn:Sigma_L}
	\end{align}
\end{myth}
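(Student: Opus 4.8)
The plan is to compute the steady-state value of the LQG cost functional \eqref{eqn:cost_fun_J} with the perturbed input \eqref{eqn:add_ek}, ${\bf u}_k = {\bf L}{\hat{\bf x}}_{k|k} + {\bf e}_k$, and subtract the optimal cost obtained with ${\bf u}_k = {\bf u}_k^* = {\bf L}{\hat{\bf x}}_{k|k}$. First I would recall the standard separation-principle decomposition: writing the state error $\tilde{\bf x}_{k} = {\bf x}_k - {\hat{\bf x}}_{k|k}$, which is independent of the control and whose steady-state covariance depends only on the Kalman filter, and the closed-loop estimator dynamics. Substituting ${\bf u}_k^* + {\bf e}_k$ into \eqref{eqn:est_state_update}–\eqref{eqn:kf_state_eqn} gives ${\hat{\bf x}}_{k+1|k+1} = ({\bf A}+{\bf B}{\bf L}){\hat{\bf x}}_{k|k} + {\bf B}{\bf e}_k + {\bf K}\gamma_{k+1}$, so the watermark ${\bf e}_k$ enters the estimated-state recursion exactly like an extra input through ${\bf B}$. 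Because ${\bf e}_k$ is zero-mean, iid, and independent of the nominal process and measurement noises, the extra term in the state/estimate covariance is additive and decoupled from the nominal part.

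Next I would expand the per-stage cost ${\bf x}_k^T{\bf W}{\bf x}_k + {\bf u}_k^T{\bf U}{\bf u}_k$ and take expectations. Using the algebraic Riccati equation \eqref{eqn:S} for ${\bf S}$ and the optimal gain \eqref{eqn:L}, all cross terms between the nominal trajectory and the ${\bf e}_k$-driven perturbation vanish in expectation by independence and zero mean, so $J = J_{\text{opt}} + \Delta LQG$ with $\Delta LQG$ depending only on the response to ${\bf e}_k$. The contribution of ${\bf e}_k$ to the control penalty is the direct term $E[{\bf e}_k^T{\bf U}{\bf e}_k] = tr({\bf U}{\bf \Sigma}_e)$, plus the term ${\bf L}^T{\bf U}{\bf L}$ acting on the extra estimated-state covariance; the contribution to the state penalty is ${\bf W}$ acting on that same extra covariance (the watermark does not affect $\tilde{\bf x}_k$). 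Defining ${\bf \Sigma}_L$ as the observability-type Gramian solving \eqref{eqn:Sigma_L}, i.e. the accumulated cost weight $({\bf L}^T{\bf U}{\bf L} + {\bf W})$ propagated through the stable closed loop $({\bf A}+{\bf B}{\bf L})$, a standard trace identity (cycling the trace and summing the Lyapunov series) collapses the infinite sum of covariance contributions into $tr\left(({\bf B}^T{\bf \Sigma}_L{\bf B}){\bf \Sigma}_e\right)$. Adding the direct control-penalty term $tr({\bf U}{\bf \Sigma}_e)$ yields $\Delta LQG = tr\left(({\bf B}^T{\bf \Sigma}_L{\bf B} + {\bf U}){\bf \Sigma}_e\right) = tr({\bf H}{\bf \Sigma}_e)$, which is exactly \eqref{eqn:deltaLQG}–\eqref{eqn:H}; stability of $({\bf A}+{\bf B}{\bf L})$ (from stabilizability/detectability and positive-definite weights) guarantees \eqref{eqn:Sigma_L} has a unique solution and the series converges.

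The main obstacle I anticipate is bookkeeping rather than conceptual: carefully verifying that every cross term between the optimal-control trajectory and the watermark-driven perturbation integrates to zero, which requires invoking the Riccati optimality of ${\bf S}$ and ${\bf L}$ (so that the "completion of squares" leaves no first-order term in the perturbation) together with the independence of ${\bf e}_k$ from ${\bf w}_k$, ${\bf v}_k$, and the initial state. A secondary technical point is justifying the interchange of limit, expectation, and infinite summation in passing from \eqref{eqn:cost_fun_J} to the steady-state trace expression, which is legitimate precisely because $({\bf A}+{\bf B}{\bf L})$ is strictly stable so all the relevant covariance series converge geometrically.
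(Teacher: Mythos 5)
Your derivation is correct, but it takes a different route from the paper: the paper does not carry out this computation at all, and instead simply invokes Theorem~2 of \cite{Mo2015}, observing that an iid watermarking signal is a special case of the hidden-Markov-model watermarking treated there. Your argument is the self-contained version of that result: the decomposition into the control-independent estimation error $\tilde{\bf x}_k={\bf x}_k-\hat{\bf x}_{k|k}$ plus the estimator recursion $\hat{\bf x}_{k+1|k+1}=({\bf A}+{\bf B}{\bf L})\hat{\bf x}_{k|k}+{\bf B}{\bf e}_k+{\bf K}\gamma_{k+1}$ is the right one, the additive splitting of the estimate covariance holds because ${\bf e}_k$ is zero-mean, iid, and independent of the innovation sequence (as the paper itself notes below \eqref{eqn:gamma_e}), and cycling the trace over the geometric series of $({\bf A}+{\bf B}{\bf L})$ correctly produces ${\bf \Sigma}_L$ as the observability Gramian of $({\bf W}+{\bf L}^T{\bf U}{\bf L})$, hence $\Delta LQG=tr\left(({\bf B}^T{\bf \Sigma}_L{\bf B}+{\bf U}){\bf \Sigma}_e\right)$. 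One small remark: you attribute the vanishing of the cross terms to the Riccati optimality of ${\bf S}$ and ${\bf L}$, but here that is not needed — the cross terms between the nominal trajectory and the ${\bf e}$-driven perturbation already vanish purely because ${\bf e}_k$ is zero-mean and independent of the noises and innovations; the completion-of-squares argument would only be required for a perturbation correlated with the state. What your approach buys is a transparent, elementary proof that makes explicit where each of the three pieces of ${\bf H}$ (the direct penalty ${\bf U}$, the propagated penalty ${\bf L}^T{\bf U}{\bf L}$, and ${\bf W}$) comes from; what the paper's citation buys is brevity and the immediate generalization to correlated (Gauss--Markov) watermarks.
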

\begin{proof}[Proof]
	The theorem can be proved easily using the Theorem 2 from \cite{Mo2015}, considering the iid watermarking as a special case of the hidden Markov model (HMM). 
\end{proof}
\begin{remark}
	Since the closed loop system $\left({\bf A}+{\bf B}{\bf L}\right)$ is stable, the Lyapunov equation of (\ref{eqn:Sigma_L}) will have a unique solution. 
\end{remark}
\begin{remark}
	Theorem~\ref{th:deltaLQG} indicates the increase in the LQG control cost due to the addition of the watermarking, \ie, $\Delta LQG$ is a linear function of the elements of the covariance matrix ${\bf \Sigma}_e$ of the added watermarking. 
	The matrix ${\bf H}$ in (\ref{eqn:deltaLQG}) is dependent on the plant and controller parameters. Since the plant and the controller are assumed to be time-invariant, ${\bf H}$ will be a constant matrix during the steady-state operation of the system. Therefore, the increase in the LQG control cost is linear with respect to the covariance matrix, ${\bf \Sigma}_e$, of the watermarking signal. 
\end{remark}

\subsection{Multiple Input Single Output Systems}
\label{subsec:miso_system}
In this subsection, a simplified case of the MIMO system, \ie, the MISO system is studied to get better structural understanding and insights. Lemma~\ref{le:kld_miso} provides the expressions for the expected KLD and KLD under the optimal and sub-optimal CUSUM tests, respectively, which are the simplified version of the KLD expressions provided in Theorem~\ref{th:th_kld}. The following attack model is assumed for the MISO system, which is a special case of the stochastic linear attack model given in (\ref{eqn:hidden_states_main}), 
\begin{equation}
	\begin{aligned}
		&E\left[z_k^2 \right] = \sigma_z^2 \text{, and}  \cr
		&E\left[z_kz_{k-k_0} \right] = \rho^{k_0}\sigma_z^2 \text{, }\rho < 1.
		\label{eqn:attack_model_miso}
	\end{aligned}
\end{equation}
Therefore, ${\bf A}_a = \rho \text{, and } {\bf Q}_a = \left(1 - \rho^2 \right)\sigma_z^2$.
\begin{lemma}\label{le:kld_miso}
	For a MISO system, the expected KLD $E\left[ D\left(f_{{\widetilde {\bf \gamma}_k}},f_{{ {\bf \gamma}_k}} |\left\{\bar \gamma\right\}_1^{k-1},\left\{{\bf e}\right\}_1^{k-1}\right)\right]$ under the optimal CUSUM test, and the KLD $D\left(f_{{\widetilde {\bf \gamma}_k},{\bf e}_{k-1}},f_{{ {\bf \gamma}_k},{\bf e}_{k-1}}\right)$ under the sub-optimal CUSUM test will be as follows, 
	\begin{align}
		&E\left[ D\left(f_{{\widetilde {\bf \gamma}_k}},f_{{ {\bf \gamma}_k}} |\left\{\bar \gamma\right\}_1^{k-1},\left\{{\bf e}\right\}_1^{k-1}\right)\right] \cr 
		&=\frac{1}{2}\left\{\frac{\sigma_{\widetilde \gamma}^2}{\sigma_{ \gamma}^2} -1 - \log\frac{(1-\rho^2)\sigma_z^2}{{\bf \sigma}_{\gamma}^2}\right\} \label{eqn:opt_kld_miso}, \text{ and} \ \\
		&{D\left(f_{{\widetilde {\bf \gamma}_k},{\bf e}_{k-1}},f_{{ {\bf \gamma}_k},{\bf e}_{k-1}} \right)}= \cr
		& \frac{1}{2}\left\{\frac{\sigma_{\widetilde \gamma}^2}{\sigma_{ \gamma}^2}-1- \log\frac{\sigma_{\widetilde \gamma}^2-{\bf C}{\bf B}{\bf \Sigma}_e{\bf B}^T{\bf C}^T}{\sigma_{ \gamma}^2} \right\}
		\label{eqn:subopt_kld_miso}
	\end{align}
	where the attack model is given by (\ref{eqn:attack_model_miso}). $\sigma_{ \gamma}^2$ and $\sigma_{\widetilde \gamma}^2$ are the scalar variances of the innovation signals $\gamma_k$ and $\widetilde \gamma_k$ before and after the attack, respectively. Hence,
	\begin{align}
		\sigma_{\gamma}^2&={\bf C}{\bf P}{\bf C}^T+{R} \text{, and} \ \\
		\sigma_{\widetilde \gamma}^2 &= M_z\sigma_z^2+\text{tr}\left({\bf M}_e{\bf \Sigma}_e\right) \label{eqn:sigma_sq_attack_th}
	\end{align}
	where $R$ and $M_z$ are scalar quantities. $M_z$ and ${\bf M}_e$ will take the following forms,
	\begin{align}
		&M_z=1-2{\bf C}\left({\bf A}+{\bf B}{\bf L}\right)\left({\bf I}_n-\rho\mathcal{A}\right)^{-1}{\bf K} \rho+ \nonumber \\
		&{\bf C}\left({\bf A}+{\bf B}{\bf L}\right){\bf \Sigma}^z_{x^F}\left({\bf A}+{\bf B}{\bf L}\right)^T{\bf C}^T \text{, and} \label{eqn:M1} \ \\
		&{\bf M}_e = {\bf B^T}\left({\bf I}_n - {\bf K}{\bf C}\right)^T{\bf \Sigma}^e_{x^F}\left({\bf I}_n - {\bf K}{\bf C}\right){\bf B}+{\bf B}^T{\bf C}^T{\bf C}{\bf B} \label{eqn:M2} \nonumber \\
	\end{align}
	where ${\bf \Sigma}^z_{x^F}$ and ${\bf \Sigma}^e_{x^F}$ are the solutions to the Lyapunov equations, 
	\begin{align}
		&{\bf \mathcal A}{\bf \Sigma}^z_{x^F}{\bf \mathcal A}^T-{\bf \Sigma}^z_{x^F}+ {\bf K}{\bf K}^T+{\bf \mathcal{A}}\left[{\bf I}_n-\rho{\bf \mathcal A}\right]^{-1}{\bf K}{\bf K}^T\rho \cr
		&+\left[{\bf \mathcal{A}}\left[{\bf I}_n-\rho{\bf \mathcal A}\right]^{-1}{\bf K}{\bf K}^T\rho\right]^T=0, \label{eqn:Sigma_xz} \ \\
		& \text{and} \cr
		&\mathcal{A}^{T}{\bf \Sigma}^e_{x^F}\mathcal{A}-{\bf \Sigma}^e_{x^F}+\left({\bf A}+{\bf B}{\bf L}\right)^T{\bf C}^T{\bf C}\left({\bf A}+{\bf B}{\bf L}\right)=0 \cr
		& \text{respectively.}	
		\label{eqn:Sigma_xe} 
	\end{align}
	Furthermore, $\Delta LQG$ coincides with Theorem~\ref{th:deltaLQG}.
\end{lemma}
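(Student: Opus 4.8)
The plan is to obtain Lemma~\ref{le:kld_miso} as a direct specialization of the MIMO results already established, the only genuine work being the evaluation of the scalar post-attack innovation variance $\sigma_{\widetilde \gamma}^2$. First I would set $m=1$ in Theorem~\ref{th:th_kld}: then ${\bf \Sigma}_\gamma$ collapses to the scalar $\sigma_\gamma^2={\bf C}{\bf P}{\bf C}^T+R$, the matrix ${\bf Q}_a$ becomes $(1-\rho^2)\sigma_z^2$ under the identification ${\bf A}_a=\rho$, ${\bf Q}_a=(1-\rho^2)\sigma_z^2$ forced by the attack model (\ref{eqn:attack_model_miso}), and ${\bf \Sigma}_{\widetilde \gamma}$ becomes the scalar $\sigma_{\widetilde \gamma}^2$, while ${\bf C}{\bf B}{\bf \Sigma}_e{\bf B}^T{\bf C}^T$ remains a scalar. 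Substituting these into (\ref{eqn:opt_kld}) and (\ref{eqn:subopt_kld}) yields (\ref{eqn:opt_kld_miso}) and (\ref{eqn:subopt_kld_miso}) directly; $\Delta LQG$ is untouched because neither ${\bf H}$ nor ${\bf \Sigma}_L$ in Theorem~\ref{th:deltaLQG} depends on the attack model, so it coincides with Theorem~\ref{th:deltaLQG}.

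The remaining task is to reduce the expression for ${\bf \Sigma}_{\widetilde \gamma}$ in Lemma~\ref{lemma:sigma_gammas} to the closed form (\ref{eqn:sigma_sq_attack_th}). I would start from ${\bf E}_{zz}(0)=\sigma_z^2$ (the scalar Lyapunov solution for ${\bf A}_a=\rho$) and evaluate the geometric sum (\ref{eqn:Exz1_original}): since $\mathcal{A}$ is strictly stable and $|\rho|<1$, $\sum_{i=0}^\infty\rho^{i+1}\mathcal{A}^i$ converges and ${\bf E}_{xz}(-1)=\sigma_z^2\,\rho\left({\bf I}_n-\rho\mathcal{A}\right)^{-1}{\bf K}$. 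Because every $\sigma_z^2$-carrying term in (\ref{eqn:sigma_gamma_attack}) and in the Lyapunov equation (\ref{eqn:ExFxF_th1p1}) is proportional to $\sigma_z^2$, I would write ${\bf \Sigma}_{x^Fz}=\sigma_z^2{\bf \Sigma}^z_{x^F}$, divide (\ref{eqn:ExFxF_th1p1}) by $\sigma_z^2$ to recover exactly (\ref{eqn:Sigma_xz}), and then collect in (\ref{eqn:sigma_gamma_attack}) the terms ${\bf E}_{zz}(0)$, the two mutually transposed (hence, in the scalar case, equal) contributions $-{\bf C}({\bf A}+{\bf B}{\bf L}){\bf E}_{xz}(-1)$, and ${\bf C}({\bf A}+{\bf B}{\bf L}){\bf \Sigma}_{x^Fz}({\bf A}+{\bf B}{\bf L})^T{\bf C}^T$. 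Factoring out $\sigma_z^2$ gives $M_z\sigma_z^2$ with $M_z$ as in (\ref{eqn:M1}).

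For the watermarking-dependent terms I would handle ${\bf C}{\bf B}{\bf \Sigma}_e{\bf B}^T{\bf C}^T+{\bf C}({\bf A}+{\bf B}{\bf L}){\bf \Sigma}_{x^Fe}({\bf A}+{\bf B}{\bf L})^T{\bf C}^T$. Using strict stability of $\mathcal{A}$, the solution of (\ref{eqn:ExFxF_th1p2}) admits the series ${\bf \Sigma}_{x^Fe}=\sum_{i=0}^\infty\mathcal{A}^i({\bf I}_n-{\bf K}{\bf C}){\bf B}{\bf \Sigma}_e{\bf B}^T({\bf I}_n-{\bf K}{\bf C})^T(\mathcal{A}^T)^i$; since the quantity of interest is a scalar, I would replace it by its trace, use cyclic invariance of the trace to move the sum onto $({\bf A}+{\bf B}{\bf L})^T{\bf C}^T{\bf C}({\bf A}+{\bf B}{\bf L})$, and identify $\sum_{i=0}^\infty(\mathcal{A}^T)^i({\bf A}+{\bf B}{\bf L})^T{\bf C}^T{\bf C}({\bf A}+{\bf B}{\bf L})\mathcal{A}^i$ as the solution ${\bf \Sigma}^e_{x^F}$ of the adjoint Lyapunov equation (\ref{eqn:Sigma_xe}). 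Pulling ${\bf \Sigma}_e$ out of the resulting trace and adding ${\bf C}{\bf B}{\bf \Sigma}_e{\bf B}^T{\bf C}^T=\text{tr}\left({\bf \Sigma}_e{\bf B}^T{\bf C}^T{\bf C}{\bf B}\right)$ gives $\text{tr}\left({\bf M}_e{\bf \Sigma}_e\right)$ with ${\bf M}_e$ as in (\ref{eqn:M2}), so that $\sigma_{\widetilde \gamma}^2=M_z\sigma_z^2+\text{tr}\left({\bf M}_e{\bf \Sigma}_e\right)$, which is (\ref{eqn:sigma_sq_attack_th}).

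The step I expect to be the main obstacle is the adjoint manipulation for the ${\bf \Sigma}_{x^Fe}$ contribution: keeping the bookkeeping correct so that the ${\bf \Sigma}_e$-sandwich is extracted cleanly and the remaining Gramian-type sum is recognized as the \emph{transposed} Lyapunov equation (\ref{eqn:Sigma_xe}) rather than (\ref{eqn:ExFxF_th1p2}) itself, together with justifying the interchange of the infinite sum and the trace (immediate once strict stability of $\mathcal{A}$ gives absolute convergence). The geometric-series evaluation of ${\bf E}_{xz}(-1)$ is the other place to be careful, but convergence there follows at once from $|\rho|<1$ and the spectral radius of $\mathcal{A}$ being below one.
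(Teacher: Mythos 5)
Your proposal is correct and follows essentially the same route as the paper's proof: the KLD formulas are obtained by direct scalar substitution into Theorem~\ref{th:th_kld}, ${\bf E}_{xz}(-1)$ is evaluated as the geometric series $\left[{\bf I}_n-\rho\mathcal{A}\right]^{-1}{\bf K}\rho\sigma_z^2$, the $\sigma_z^2$-proportional terms are collected into $M_z\sigma_z^2$ via ${\bf \Sigma}_{x^Fz}={\bf \Sigma}^z_{x^F}\sigma_z^2$, and the watermarking contribution is converted by the series/trace/cyclic-permutation argument into $\text{tr}\left({\bf M}_e{\bf \Sigma}_e\right)$ with ${\bf \Sigma}^e_{x^F}$ solving the adjoint Lyapunov equation (\ref{eqn:Sigma_xe}). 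No gaps.
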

\begin{proof}
	(\ref{eqn:opt_kld_miso}) and (\ref{eqn:subopt_kld_miso}) can be derived directly by replacing the variables from (\ref{eqn:opt_kld}) and (\ref{eqn:subopt_kld}) by their MISO system counterparts. Therefore, only the derivation of $\sigma_{\widetilde \gamma}^2$ is provided in Appendix~\ref{apdx:kld_miso}. 
\end{proof}

\begin{remark}
	The expected KLD (\ref{eqn:opt_kld_miso}) and the KLD (\ref{eqn:subopt_kld_miso}) are convex functions in ${\bf \sigma}_z^2$. The convexity can be proved by taking the first and second derivative of (\ref{eqn:opt_kld_miso}) and (\ref{eqn:subopt_kld_miso})  with respect to ${\bf \sigma}_z^2$. The minimum value of the KLD will be achieved for $\sigma_z^{*2}=$ $\frac{\sigma_{\gamma}^2}{M_z}$ and $\frac{\sigma_{\gamma}^2-tr\left(\left({\bf M}_e-{\bf B}^T{\bf C}^T{\bf C}{\bf B}\right){\bf \Sigma}_e\right)}{M_z}$ for the optimal and sub-optimal tests, respectively. Therefore, we can conclude the KLD is not always increasing with the attacker signal power $\sigma_z^2$; it depends also on the power of the watermarking signal for the sub-optimal test. However,  ${\bf \sigma}_z^{*2}$ for the optimal test does not depend on the watermarking signal power. In fact, the attacker can modify ${\bf \sigma}_z^2$ to ${\bf \sigma}_z^{*2}$ to reduce the KLD which in turn reduces the probability of detection. On the other hand, the control system designer can choose $tr\left(\left({\bf M}_e-{\bf B}^T{\bf C}^T{\bf C}{\bf B}\right){\bf \Sigma}_e\right) \ge \sigma_{\gamma}^2$ for the sub-optimal case, so that the KLD will always increase with the attacker signal power. However, under the optimal test, the control system designer can not do much to avoid this situation. On the other hand, for the sub-optimal test, the attacker needs to know $\Sigma_e$ to derive $\sigma_z^{*2}$. 
\end{remark}

\subsection{Optimum Watermarking Signal for MISO systems}
\label{subsec:opt_e}
By increasing the watermarking power ${\bf \Sigma}_e$, we can improve the KLD, but at the same time, it also increases the control cost, \ie, $\Delta LQG$ becomes larger. Therefore, we want to find the optimal ${\bf \Sigma}_e$, say ${\bf \Sigma}_e^*$, which will maximize the KLD subject to an upper bound on $\Delta LQG$. The optimization problem is formulated as follows,
\begin{align}
	\max_{{\bf \Sigma}_e} &\  E\left[ D\left(f_{{\widetilde {\bf \gamma}_k}},f_{{ {\bf \gamma}_k}} |\left\{\bar \gamma\right\}_1^{k-1},\left\{{\bf e}\right\}_1^{k-1}\right)\right] \text{or}  \cr
	\max_{{\bf \Sigma}_e} &\  D\left(f_{\widetilde \gamma_k,{\bf e}_{k-1}},f_{ \gamma_k,{\bf e}_{k-1}}\right) \\
	\textrm{s.t.}\  & \Delta LQG \le J \\
	& {\bf \Sigma}_e \ge 0
\end{align}
where $J$ is a user choice. 
The positive semi-definite ${\bf \Sigma}_e$ matrix can be decomposed by the eigenvalue decomposition as 
\begin{align}
	{\bf \Sigma}_e={\bf V}_e{\bf \Lambda}_e{\bf V}_e^T,
\end{align}
where ${\bf V}_e$  is the orthonormal eigenvector matrix and ${\bf \Lambda}_e$ is the diagonal eigenvalue matrix. If we assume that ${\bf V}_e$ is known apriori, then we only need to find the optimum ${\bf \Lambda}_e$ which is a diagonal matrix. 
\begin{myth}\label{th:opt_diagonal_Sigma_e}
	The optimum diagonal ${\bf \Lambda}_e$ that will maximize the expected KLD under the optimal CUSUM test or the KLD under the sub-optimal CUSUM test subject to $\Delta LQG \le J$ will have only one non-zero element on its main diagonal.
\end{myth}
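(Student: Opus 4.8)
The plan is to reparametrize the optimization by the diagonal $\boldsymbol{\lambda} = (\lambda_{e,1},\dots,\lambda_{e,p})$ of $\mathbf{\Lambda}_e$, show that in both cases the objective is a convex function of $\boldsymbol{\lambda}$ over a feasible set that turns out to be a simplex, and then appeal to the fact that a convex function on a compact polytope is maximized at an extreme point — every extreme point of this simplex having at most one nonzero coordinate.

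First I would identify the feasible set. Since $\mathbf{V}_e$ is fixed and orthonormal, $\mathbf{\Sigma}_e = \mathbf{V}_e\mathbf{\Lambda}_e\mathbf{V}_e^T \ge 0$ is equivalent to $\lambda_{e,i}\ge 0$ for all $i$, and by Theorem~\ref{th:deltaLQG} together with the cyclic property of the trace, $\Delta LQG = \mathrm{tr}(\mathbf{H}\mathbf{\Sigma}_e) = \sum_{i=1}^{p} h_i\lambda_{e,i}$, where $h_i \triangleq [\mathbf{V}_e^T\mathbf{H}\mathbf{V}_e]_{ii} > 0$ because $\mathbf{H} = \mathbf{B}^T\mathbf{\Sigma}_L\mathbf{B} + \mathbf{U} > 0$. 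Hence the feasible set is $\mathcal{S} = \{\boldsymbol{\lambda}\in{\rm I\!R}^{p} : \lambda_{e,i}\ge 0,\ \sum_i h_i\lambda_{e,i}\le J\}$, which is compact and equals the convex hull of the origin together with the $p$ points obtained by setting a single coordinate $\lambda_{e,i} = J/h_i$ and all other coordinates to zero; these $p+1$ points are the extreme points of $\mathcal{S}$, and each has at most one nonzero entry.

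Next I would express the two objectives as functions of $\boldsymbol{\lambda}$ by means of Lemma~\ref{le:kld_miso}. From (\ref{eqn:sigma_sq_attack_th})--(\ref{eqn:M2}) one has $\sigma_{\widetilde\gamma}^2 = M_z\sigma_z^2 + \mathrm{tr}(\mathbf{M}_e\mathbf{\Sigma}_e) = M_z\sigma_z^2 + \sum_i a_i\lambda_{e,i}$ with $a_i \triangleq [\mathbf{V}_e^T\mathbf{M}_e\mathbf{V}_e]_{ii}\ge 0$ (here $\mathbf{M}_e \ge 0$ because the Lyapunov solution $\mathbf{\Sigma}^e_{x^F}$ in (\ref{eqn:Sigma_xe}) is positive semidefinite, $\mathcal{A}$ being strictly stable), so $\sigma_{\widetilde\gamma}^2$ is affine in $\boldsymbol{\lambda}$; similarly $\mathbf{C}\mathbf{B}\mathbf{\Sigma}_e\mathbf{B}^T\mathbf{C}^T$ is affine in $\boldsymbol{\lambda}$. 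For the optimal CUSUM test, (\ref{eqn:opt_kld_miso}) shows that the expected KLD depends on $\boldsymbol{\lambda}$ only through $\sigma_{\widetilde\gamma}^2$, hence is an affine (in particular convex) function of $\boldsymbol{\lambda}$ with nonnegative coefficients. For the sub-optimal CUSUM test, (\ref{eqn:subopt_kld_miso}) writes the KLD as $\frac{1}{2}\{\sigma_{\widetilde\gamma}^2/\sigma_\gamma^2 - 1 - \log[(\sigma_{\widetilde\gamma}^2 - \mathbf{C}\mathbf{B}\mathbf{\Sigma}_e\mathbf{B}^T\mathbf{C}^T)/\sigma_\gamma^2]\}$: the first part is affine in $\boldsymbol{\lambda}$, while $\boldsymbol{\lambda}\mapsto \sigma_{\widetilde\gamma}^2 - \mathbf{C}\mathbf{B}\mathbf{\Sigma}_e\mathbf{B}^T\mathbf{C}^T$ is affine and strictly positive on all of $\mathcal{S}$ by the ordering $\mathbf{\Sigma}_{\widetilde\gamma} - \mathbf{C}\mathbf{B}\mathbf{\Sigma}_e\mathbf{B}^T\mathbf{C}^T \ge \mathbf{Q}_a > 0$ from the optimality-gap corollary following Theorem~\ref{th:th_kld}, so composing it with the convex function $-\log$ gives a convex term; thus the sub-optimal KLD is convex in $\boldsymbol{\lambda}$ on $\mathcal{S}$.

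Finally, since a convex function on a compact convex polytope attains its maximum at an extreme point, an optimal $\boldsymbol{\lambda}$ can be taken among the $p+1$ extreme points of $\mathcal{S}$, i.e.\ with at most one nonzero coordinate, so the optimal $\mathbf{\Lambda}_e$ has at most one nonzero diagonal element. For the optimal CUSUM test this sharpens to exactly one: the expected KLD is nondecreasing in each $\lambda_{e,i}$ (as $a_i\ge 0$), so the budget is saturated along the single coordinate $i$ maximizing $a_i/h_i$; and in the sub-optimal case a single-coordinate vertex beats the origin whenever $\mathbf{M}_e \neq 0$ and $J$ is not so small that the origin is optimal. I expect the main obstacle to be the convexity step for the sub-optimal KLD — specifically verifying that the argument of the logarithm is an affine function of $\boldsymbol{\lambda}$ that remains positive throughout $\mathcal{S}$, which is exactly where the positive-definite ordering from the corollary is essential; once this is in hand, both the characterization of the extreme points of $\mathcal{S}$ and the maximum-at-a-vertex argument are routine.
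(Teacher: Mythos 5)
Your proposal is correct and follows essentially the same route as the paper's proof: fix $\mathbf{V}_e$, reparametrize by the diagonal of $\mathbf{\Lambda}_e$, show the (expected) KLD is convex in those eigenvalues (affine for the optimal test, affine minus $\log$ of a positive affine function for the sub-optimal test), and conclude that the maximum over the polyhedral feasible set is attained at a vertex with a single nonzero coordinate. The only difference is cosmetic — you establish convexity of the sub-optimal KLD by the composition rule rather than by the paper's explicit rank-one Hessian computation, and you spell out the extreme points of the simplex more carefully — but the decomposition and the key convexity-plus-vertex argument coincide with the paper's.
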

\begin{proof}
	The proof of Theorem~\ref{th:opt_diagonal_Sigma_e} is provided in Appendix~\ref{apdx:optimization}.
\end{proof}
In the light of Theorem~\ref{th:opt_diagonal_Sigma_e}, we search for the optimum ${\bf \Sigma}_e$ in the class of rank one positive semi-definitive matrices of the following form
\begin{align}
	{\bf \Sigma}_e=\lambda_e {\bf v}_e {\bf v}_e^T,
	\label{eqn:sigma_e_rank_1}
\end{align}
where $\lambda_e$ is the non-zero eigenvalue and ${\bf v}_e$ is the corresponding eigenvector. We modify (\ref{eqn:sigma_e_rank_1}) to represent it in the following form
\begin{align}
	{\bf \Sigma}_e= {\bf v}_{\lambda} {\bf v}_{{\lambda}}^T \text{, where } {\bf v}_{\lambda} = {\sqrt \lambda_e}{\bf v}_e.
	\label{eqn:sigma_e_rank_1_2nd}
\end{align}
Finally, the optimization problem becomes, 
\begin{align}
	\max_{{\bf v}_{\lambda}} &\  E\left[ D\left(f_{{\widetilde {\bf \gamma}_k}},f_{{ {\bf \gamma}_k}} |\left\{\bar \gamma\right\}_1^{k-1},\left\{{\bf e}\right\}_1^{k-1}\right)\right] \text{or}  \cr
	\max_{{\bf v}_{\lambda}} &\  D\left(f_{\widetilde \gamma_k,{\bf e}_{k-1}},f_{ \gamma_k,{\bf e}_{k-1}}\right) \label{eqn:cost_fun_v}\\
	\textrm{s.t.}\  & \Delta LQG \le J . \label{eqn:const_fun_v}
\end{align}
The optimization problem can be solved using different methods such as the sequential quadratic programming (SQP) \cite{Boggs1995}, the interior point method \cite{Forsgren2002}, etc. We have also provided a simple gradient descent based algorithm to solve the optimization problem (\ref{eqn:cost_fun_v})-(\ref{eqn:const_fun_v}) in Appendix-\ref{apdx:opt_algo}.

The cost function under the optimal CUSUM test can be simplified. Maximization of $E\left[ D\left(f_{{\widetilde {\bf \gamma}_k}},f_{{ {\bf \gamma}_k}} |\left\{\bar \gamma\right\}_1^{k-1},\left\{{\bf e}\right\}_1^{k-1}\right)\right]$ with respect to ${\bf v}_{\lambda}$ is the same as maximizing the following function with respect to ${\bf v}_{\lambda}$.
\begin{equation}
	\begin{aligned}
		&{\bf v}_{\lambda}^T{\bf H}_{KLD}{\bf v}_{\lambda}  \\
		&\text{where}  \\
		&{\bf H}_{KLD} = {\bf B}^T \left({\bf I}_n - {\bf K}{\bf C}\right)^T{\cal L}_e\left({\bf I}_n - {\bf K}{\bf C}\right){\bf B}+{\bf B}^T{\bf C}^T{\bf C}{\bf B}
		\label{eqn:H_KLD_apn}
	\end{aligned}
\end{equation}
and ${\cal L}_e$ is the solution to the Lyapunov equation
\begin{align}
	{\cal{A}}^T{\cal L}_e{\cal{A}}-{\cal L}_e+\left({\bf A}+{\bf B}{\bf L}\right)^T{\bf C}^T{\bf C}\left({\bf A}+{\bf B}{\bf L}\right)=0
	\label{eqn:Le}
\end{align}
Since the matrix ${\cal A}$ is assumed to be strictly stable, the Lyapunov equation of (\ref{eqn:Le}) will have unique solution. The derivations are provided in Appendix-\ref{apdx:opt_algo}. (\ref{eqn:H_KLD_apn}) and (\ref{eqn:Le}) can be simplified for the system with relative degree higher than one, since $\bf{CB}=0$.


\section{Numerical Results}
\label{sec:numerical_results}
In this section, we will illustrate and validate different aspects of the theorems and lemmas presented in this paper using two different system models. The two different systems are (i) System-A: A second-order open-loop unstable MISO system, and (ii) System-B: A fourth-order open-loop stable MIMO system. The system parameters are provided in Appendix~\ref{apdx:system_param}. System-B is a linearized minimum phase quadruple tank system which is used previously to test the deception attack detection schemes in the literature \cite{Johansson1998}. Only the level sensor gains are altered to make the magnitude of the product ${\bf C}{\bf B}$ numerically significant.

\subsection{Tradeoff between SADD and $\Delta LQG$ under optimal CUSUM test}
Figure~\ref{fig:add_vs_Dlqg_system_opt_cusun} shows the tradeoff between the SADD and the increase in the LQG control cost $\Delta LQG$ for System-A and System-B under the optimal CUSUM test (\ref{eqn:th_cusum_opt}). We plot the derived SADD using the theory developed in this paper, and the estimated SADD from Monte-Carlo (MC) simulation, where ${\bf \Sigma}_e$ is assumed to be diagonal and all the watermarking signals have equal power. An increase in LQG cost results in quicker detection. 

\begin{figure}[h!]
	\centering
	\includegraphics[width=\figwidth]{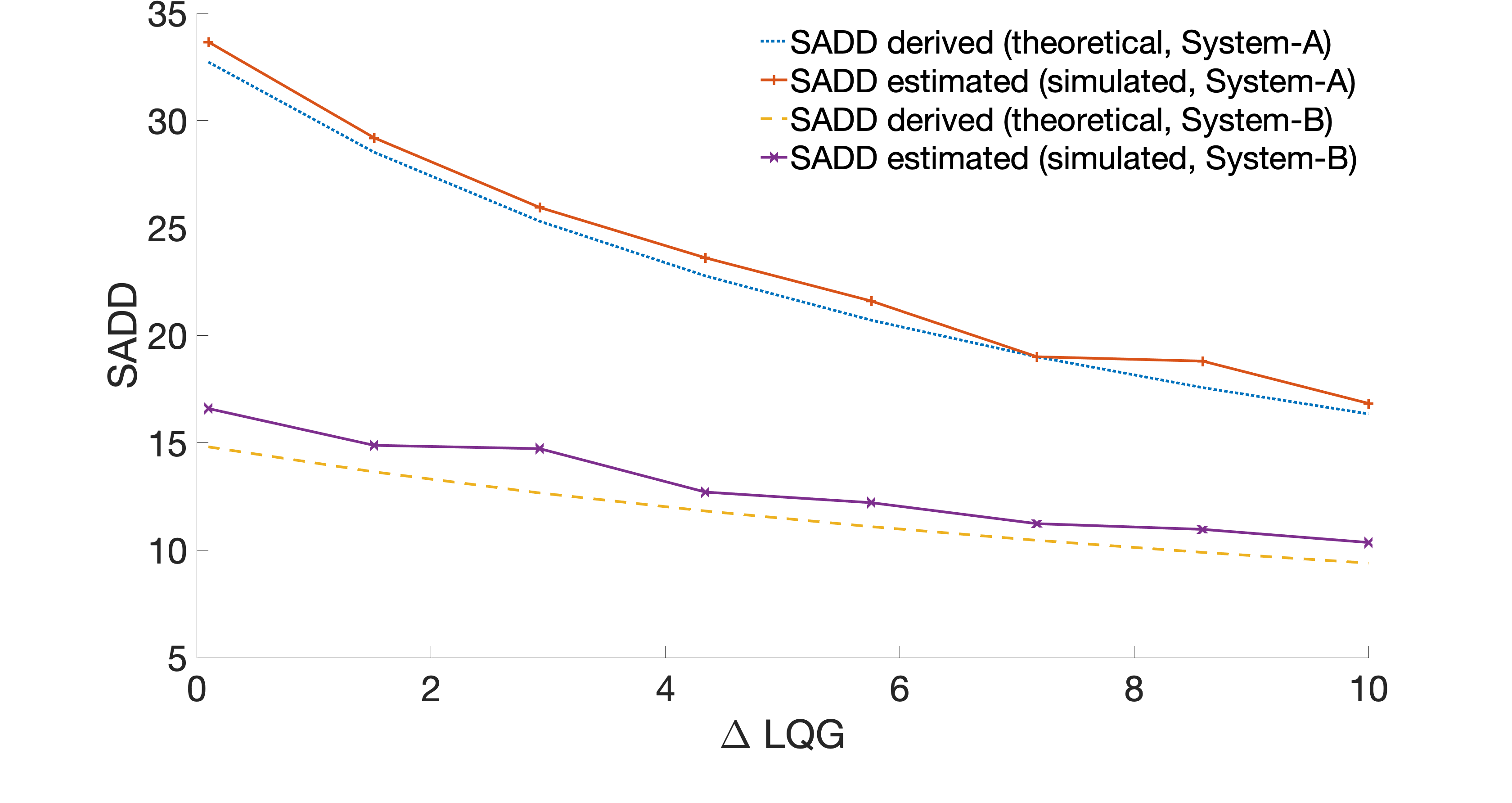}   
	\caption{SADD vs. $\Delta LQG$ plot for System-A and System-B.}
	\label{fig:add_vs_Dlqg_system_opt_cusun}
\end{figure}

\subsection{Benefit of using the joint distribution}
The choice of the joint distribution of the innovation signal and the watermarking signal improves the KLD for a fixed $\Delta LQG$ value compared to the case where the joint distribution is not considered. Therefore, we achieve the same SADD at a lower cost. As shown in Fig.~\ref{fig:compare_sadd_joint_vs_single_dist_opt_cusum}, the same theoretical SADD can be achieved at 64\% (approx.) reduced $\Delta LQG$ for System-A between the $\Delta LQG_1$ and $\Delta LQG_2$ points under the optimal CUSUM test. The percentage reduction in $\Delta LQG$ is evaluated as $\frac{\Delta LQG_2 - \Delta LQG_1}{\Delta LQG_1}\times 100\%$.

\begin{figure}[h!]
	\centering
	\includegraphics[width=\figwidth]{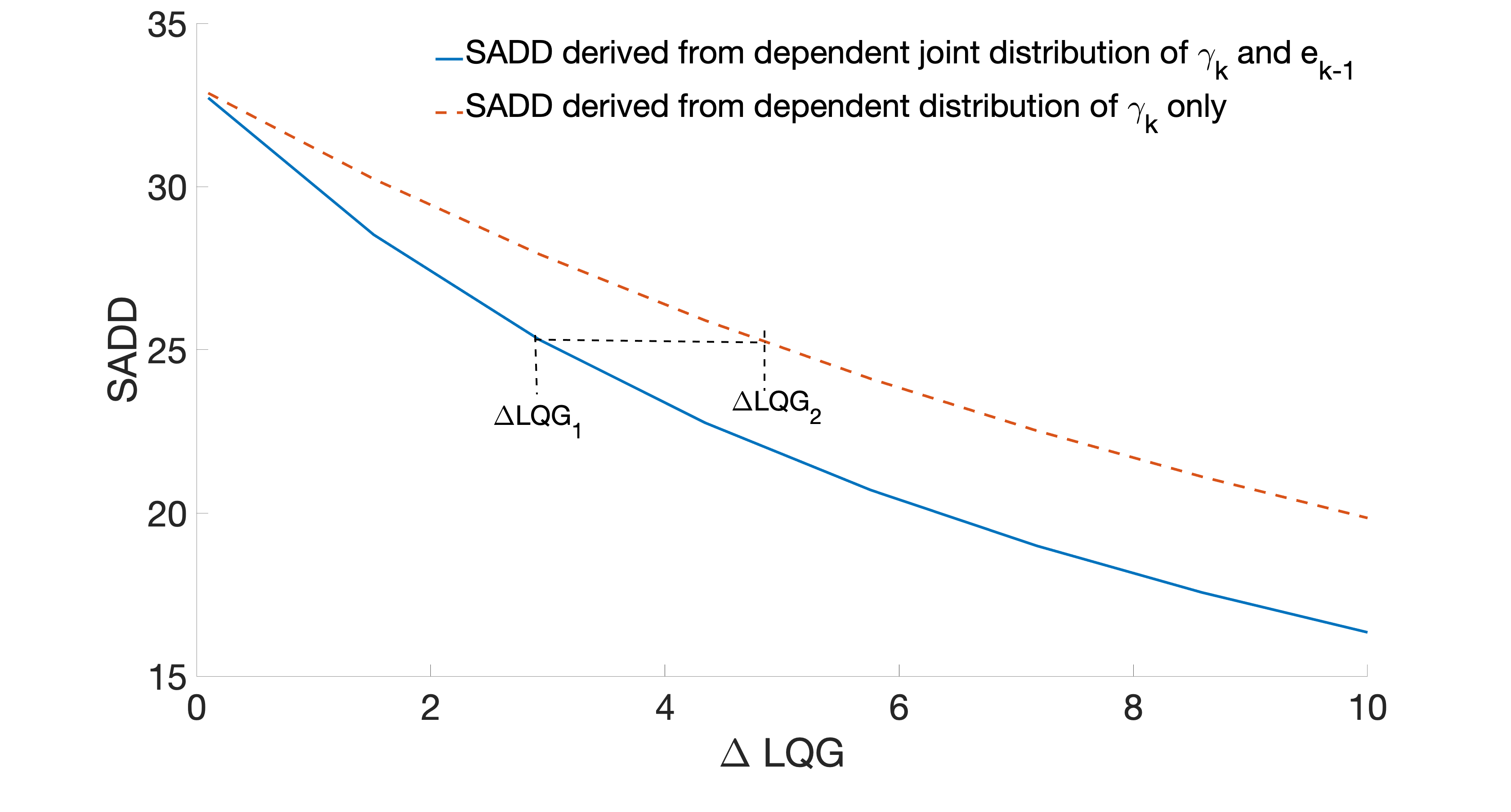}
	\caption{Comparison of SADD vs. $\Delta LQG$ plots between the optimal CUSUM detection schemes using joint and single distributions for System-A.}
	\label{fig:compare_sadd_joint_vs_single_dist_opt_cusum}
\end{figure}
\subsection{Convexity of KLD with respect to $\sigma_z^2$}
Figure~\ref{fig:KLD_vs_sigma_sq_z} shows how the KLD varies with $\sigma_z^2$ for System-A under the optimal and sub-optimal CUSUM tests. The KLD appears to be a convex function with respect to $\sigma_z^2$, and the minimum points are the same as predicted by our theory, see Fig.~\ref{fig:KLD_vs_sigma_sq_z}. We assume, $\Delta LQG = 100$, and ${\bf \Sigma}_e$ to be diagonal and both the watermarking signals to have equal power. Figure~\ref{fig:KLD_vs_sigma_sq_z} can also be interpreted as, for the selected $\Delta LQG$ we can detect an attack equally well for a small ${\bf \sigma}_z^2$ as for a significantly larger ${\bf \sigma}_z^2$.
\begin{figure}[h!]
	\centering
	\includegraphics[width=\figwidth]{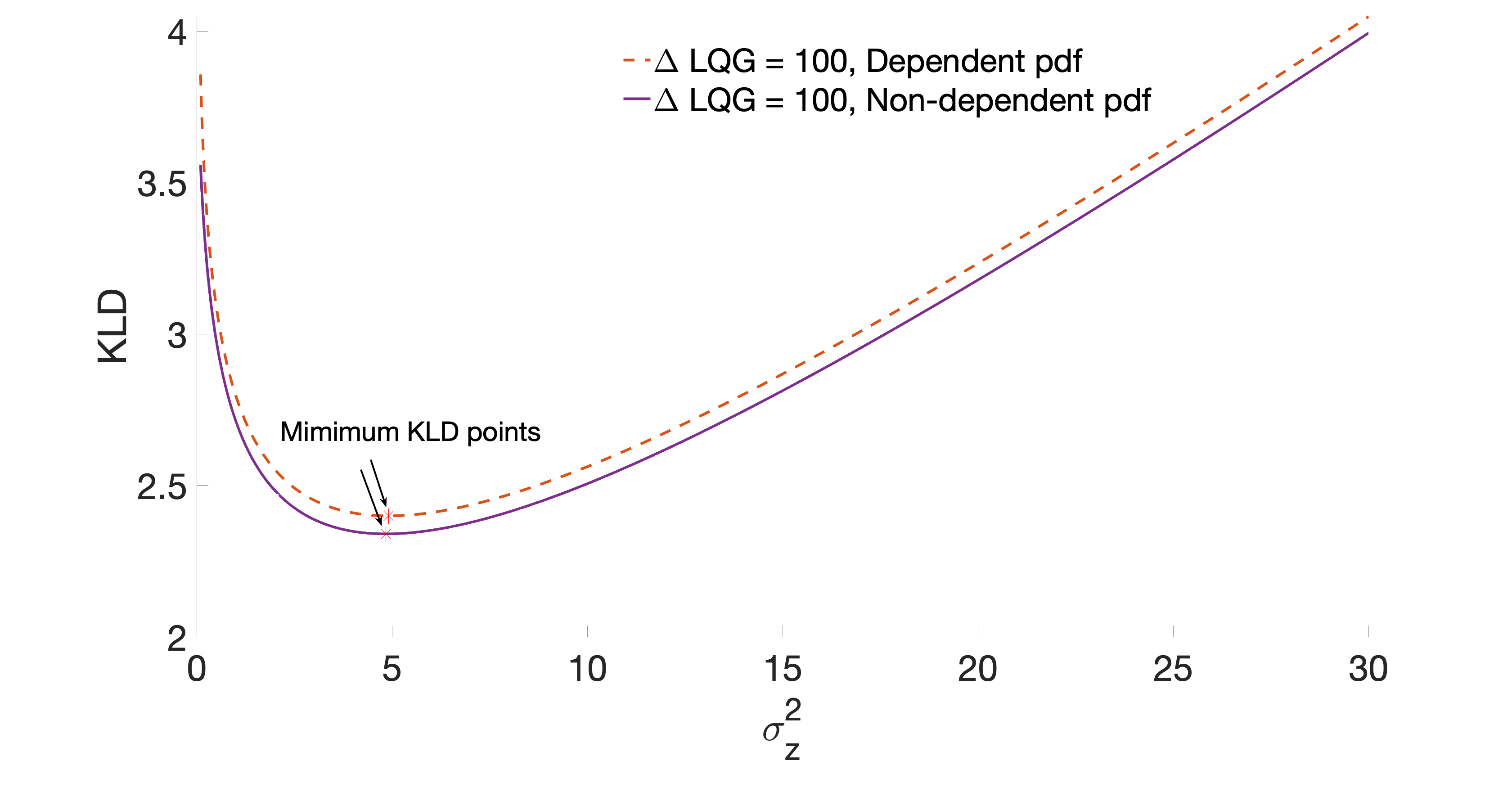}
	\caption{KLD vs. $\sigma_z^2$ plots for System-A.}
	\label{fig:KLD_vs_sigma_sq_z}
\end{figure}

\subsection{Optimum vs non-optimum ${\bf \Sigma}_{e}$}
We optimize the ${\bf \Sigma}_e$ under the optimal test using the method in Subsection~\ref{subsec:opt_e}. Figure~\ref{fig:add_vs_Dlqg_system_opt_e_opt_cusum} shows the SADD vs $\Delta LQG$ plots using the optimized ${\bf\Sigma}_e$ and a diagonal ${\bf \Sigma}_e$ with equal signal power under the optimal CUSUM test. We plot the derived SADD using our theory and the estimated SADD from MC simulation for optimized ${\bf \Sigma}_e$ and non-optimized ${\bf \Sigma}_e$ in the figure. It is evident that optimizing ${\bf \Sigma}_e$ helps in improving SADD for a fixed upper limit on $\Delta LQG$. On the other hand, we can comment that the same theoretical SADD can be achieved at 336\% (approx.) reduced $\Delta LQG$ for System-A between the points $\Delta LQG_1$ and $\Delta LQG_2$.

\begin{figure}[h!]
	\centering
	\includegraphics[width=\figwidth]{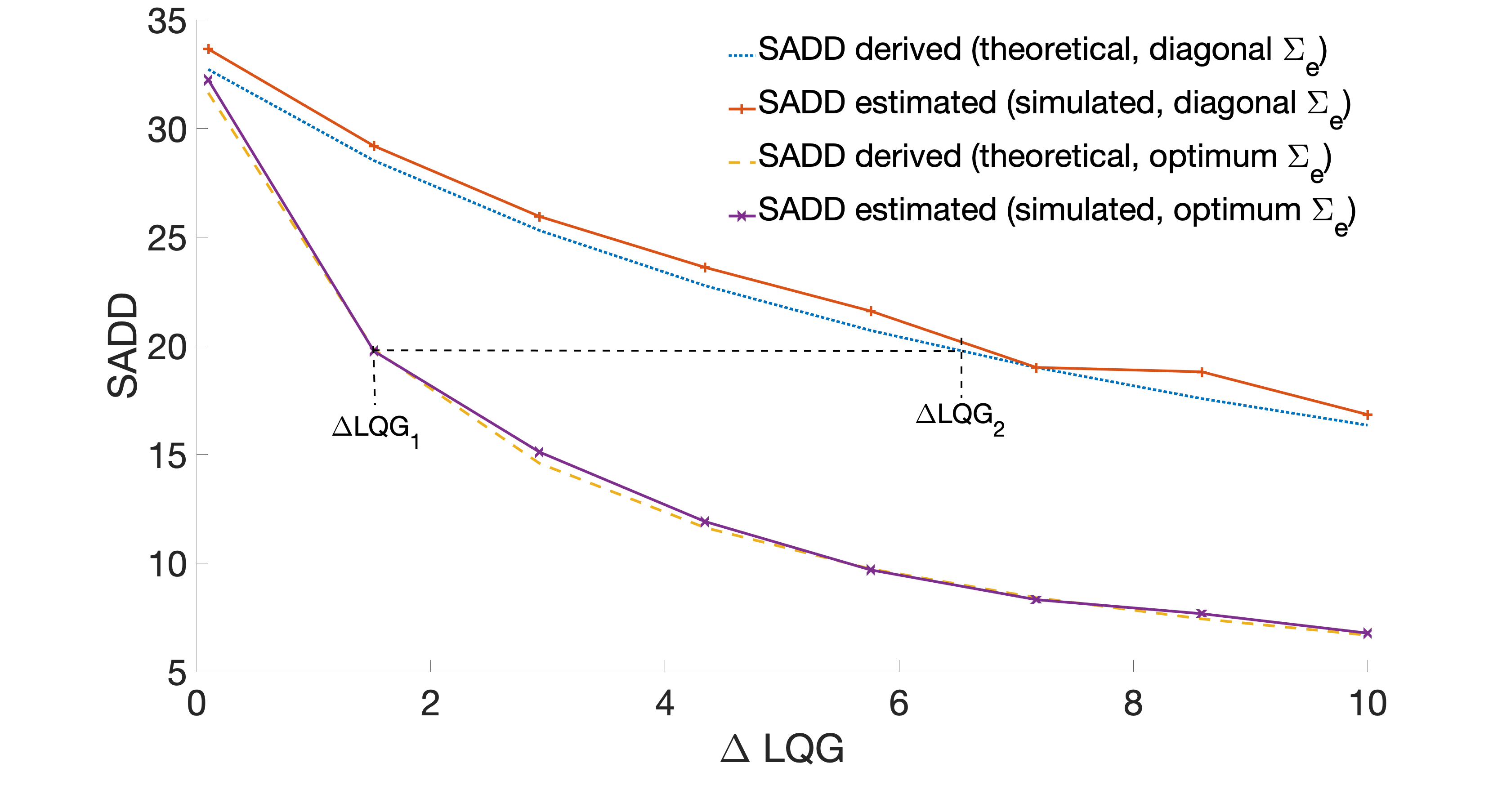}
	\caption{SADD vs. $\Delta LQG$ plot for System-A with optimum and non-optimum ${\bf \Sigma}_e$ under optimal CUSUM test.}
	\label{fig:add_vs_Dlqg_system_opt_e_opt_cusum}
\end{figure}

\subsection{Optimal vs sub-optimal CUSUM}
Figure~\ref{fig:compare_optimal_vs_subopt_cusum} illustrates the advantage of performing the optimal CUSUM test with dependent PDFs over the sub-optimal CUSUM test using the non-dependent PDFs for System-A. For both the plots, optimum ${\bf \Sigma_e}$ has been used for the corresponding cases. Therefore, we can achieve lower SADD for the same $\Delta LQG$ with the optimal CUSUM test compared to the sub-optimal one. The benefit is larger for the lower $\Delta LQG$ values as per the figure. 

\begin{figure}[h!]
	\centering
	\includegraphics[width=\figwidth]{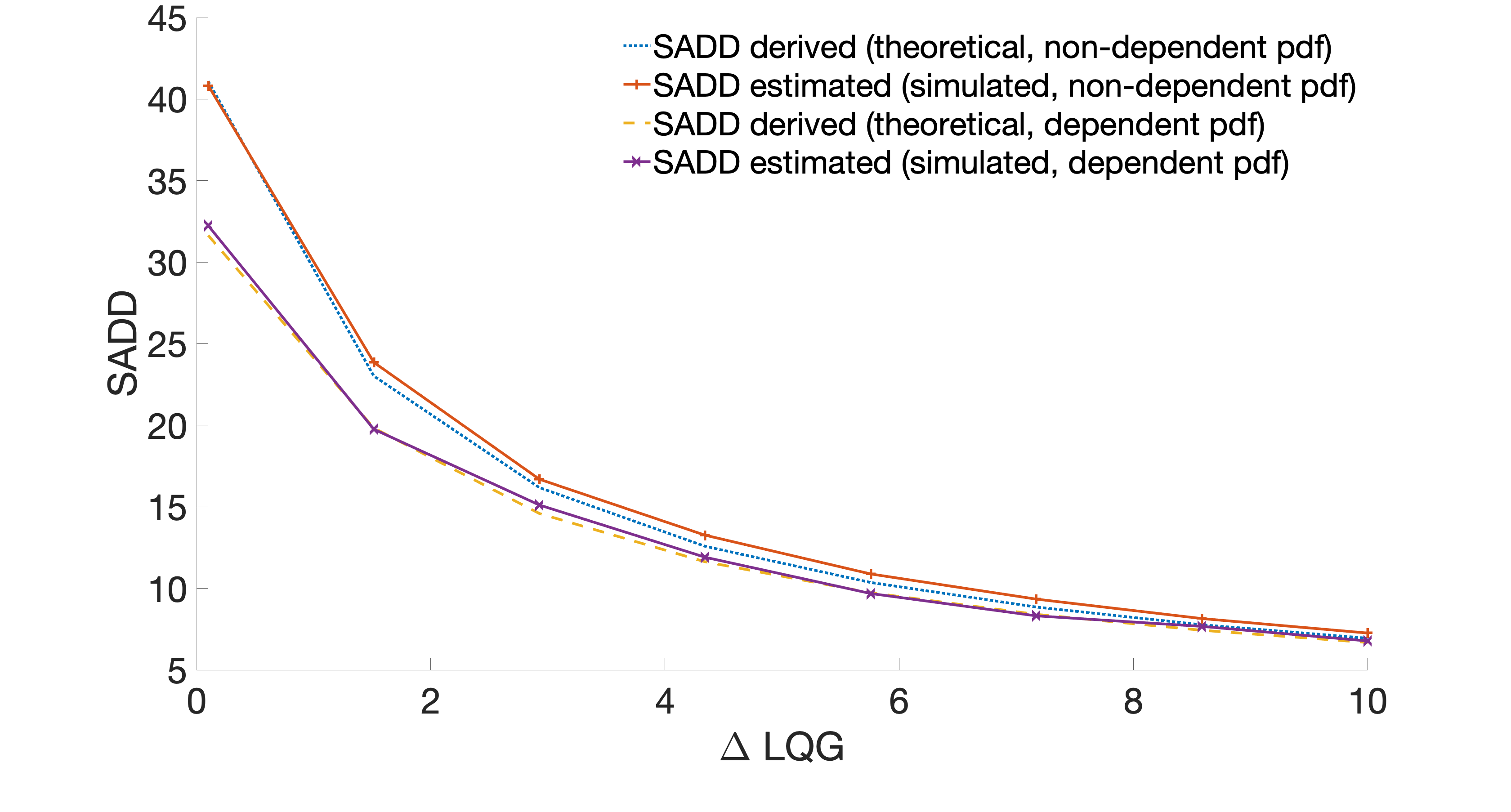}
	\caption{SADD vs. $\Delta LQG$ plot for System-A under optimal and sub-optimal CUSUM tests.}
	\label{fig:compare_optimal_vs_subopt_cusum}
\end{figure}

\subsection{Comparison with optimal NP detector}
We have compared the optimal CUSUM test results with the optimal NP detector based method reported in \cite{Mo2015, Mo2014}. The watermarking signal is taken to be iid, and the ${\bf \Sigma}_e$ is optimized for both the cases. In \cite{Mo2015}, the optimal NP detector rejects the $H_0$ hypothesis in favour of $H_1$ if  
\begin{align}
	&g_{NP,k}\left(\gamma_k, {\bf e}_{k-1},\cdots\right)=\gamma_k^T{\bf \Sigma}_{\gamma}^{-1}\gamma_k \cr
	&-\left(\gamma_k-\mu_{NP,k}\right)^T\left({\bf \Sigma}_{\gamma}+{\bf \Sigma}_{f}\right)^{-1}\left(\gamma_k-\mu_{NP,k}\right) \ge\eta
	\label{eqn:g_np} \\
	&\text{where }\mu_{NP,k} =  -{\bf C}\sum_{i=-\infty}^k{\cal A}^{k-i}{\bf B}{\bf e}_i,  \\
	&{\bf \Sigma}_{f} = {\bf C}{\cal L}_{f}{\bf C}^T  \text{, and} \\
	&{\cal L}_{f} = {\cal A}{\cal L}_{f}{\cal A}^T+{\bf B}{\bf \Sigma}_e{\bf B}^T.
\end{align}
The threshold $\eta$ is estimated by simulation from 
\begin{align}
	P_{\infty}\left\{g_{NP,k}(\cdot)\ge \eta\right\}=\alpha
\end{align}
where $P_{\infty}$ denotes the probability under no attack condition, and $\alpha$ is the threshold on the false alarm rate. The false alarm rate is the reciprocal of the ARL \cite{Murguia2016,Tunga2018}. For the method in \cite{Mo2015}, the ADD is estimated as 
\begin{align}
	ADD_{NP}=E\left[\inf\left\{k:g_{NP,k}(\cdot)\ge \eta\right\}\right].
	\label{eqn:ADD_NP}
\end{align}

Figure~\ref{fig:compare_cusum_np_trials} illustrates how the test statistics $gd_k$ and $g_{NP,k}$ vary with time $k$ under the optimal CUSUM (\ref{eqn:th_cusum_opt}) and NP tests for two random trial runs. The thresholds for the corresponding tests are also shown in the figure. When the test statistics crosses the threshold for the first time that is considered as the attack detection point. System-A is used for generating Fig.~\ref{fig:compare_cusum_np_trials}. 
\begin{figure}[h!]
	\centering
	\includegraphics[width=\figwidth]{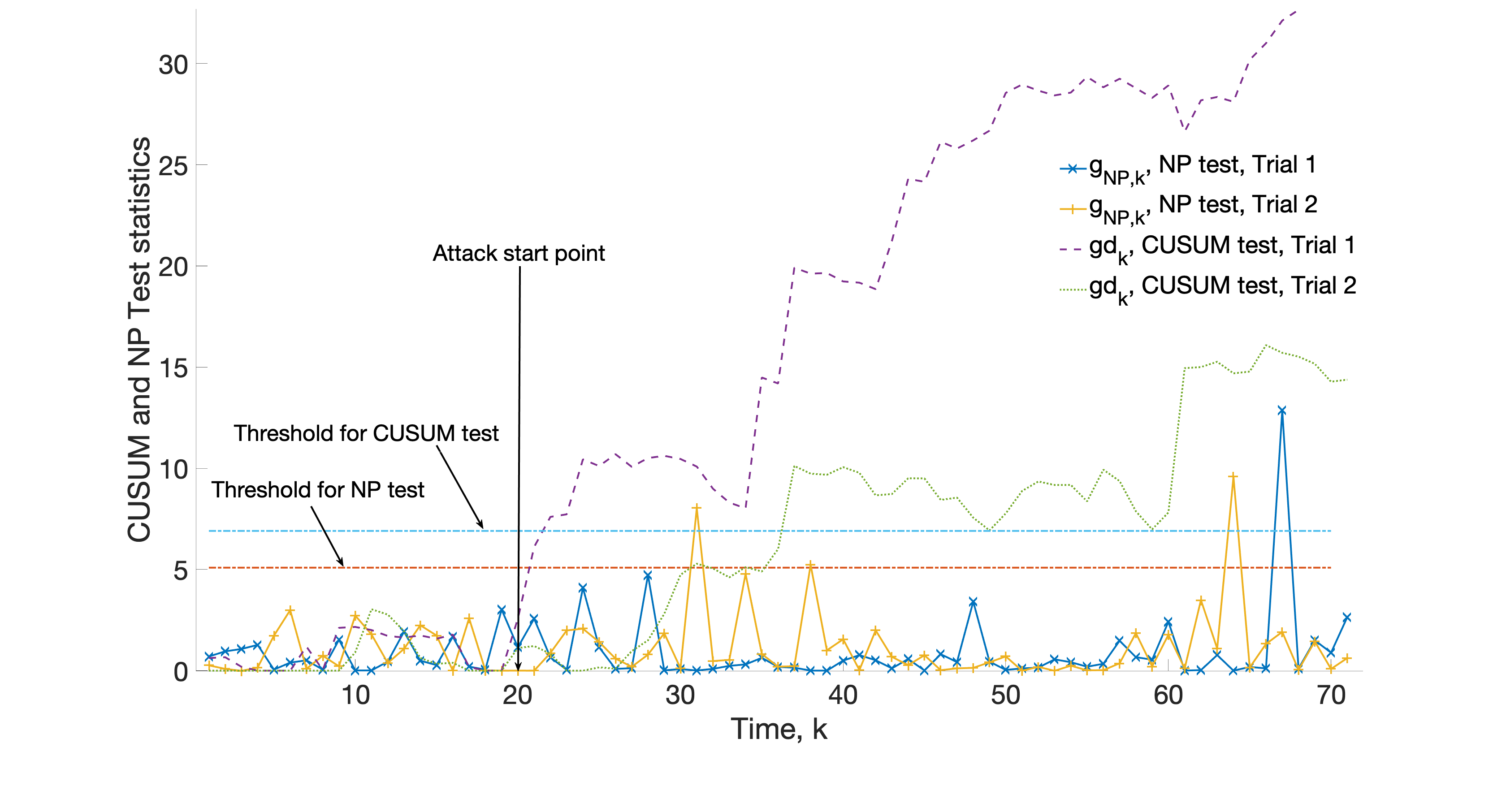}
	\caption{Test statistics under optimal CUSUM test and optimal NP test}
	\label{fig:compare_cusum_np_trials}
\end{figure}
Figure~\ref{fig:compare_cusum_vs_chi_sq} shows the tradeoff between the ADD and the increase in $\Delta LQG$ for System-A under the optimal CUSUM test and the method reported in \cite{Mo2015}. We plot the derived SADD using the theory developed in this paper, the estimated SADD applying the optimal CUSUM test on the simulated data, and the estimated ADD applying the test reported in \cite{Mo2015} on the simulated data. It is clear from the figure that we can achieve lower ADD for the same LQG loss with the method proposed in this paper compared to the one reported in \cite{Mo2015}.
\begin{figure}[h!]
	\centering
	\includegraphics[width=\figwidth]{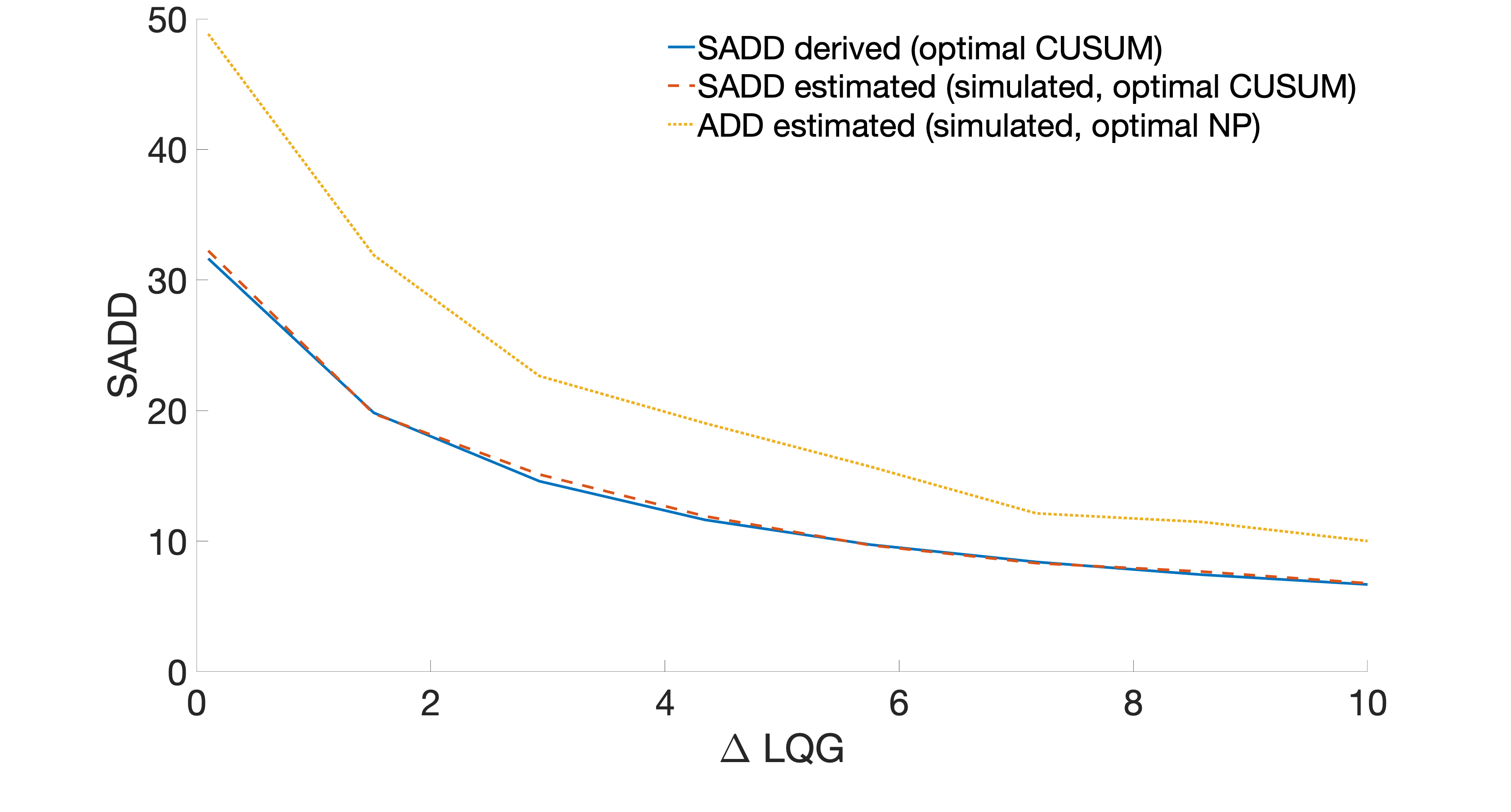}
	\caption{SADD vs. $\Delta LQG$ plot for System-A under optimal CUSUM test and optimal NP test}
	\label{fig:compare_cusum_vs_chi_sq}
\end{figure}


\section{Conclusion}
\label{sec:conclusion}
We have studied the design of the quickest attack detection scheme by adding optimal random watermarking signals, where the attacker replaces the true observations by false data, and tries to cause damage to the NCS. There is a trade-off between the decrease in SADD and the increase in LQG control cost due to the addition of the watermarking signal. We have shown a strategy to find the optimum watermarking signal variance to minimize SADD for a given increase in LQG cost for a MISO system. We found that there is a single optimum eigenvalue and direction for the optimal watermarking signal variance. The relative magnitudes of the attack signal and the watermarking signal also play an important role in attack detection or attack stealthiness. The insights provided in the paper are useful to design a proper watermarking signal. The proposed sequential detection scheme can also be applied for replay attack detection after a few modifications. We have also compared the optimal CUSUM test with the optimal NP test to detect the deception attack and found the optimal CUSUM test to be quicker. In the future, the sequential attack detection scheme can be extended to detect other kinds of attacks as well. The problem of attack detection can also be formulated as a dynamic two-player game between the control system designer and the attacker. This is a topic for future research.

\appendices
\section{Proof of Theorem~\ref{th:opt_cusum} }
\label{apdx:opt_cusum}
Under the optimal CUSUM test, the likelihood ratio from (\ref{eqn:cusum_opt}) can be simplified using the chain rule of probability as
\begin{align}
	&\frac{f_{{\widetilde {\bf \gamma}_k}}\left({{\bar {\bf \gamma}}_k}|\left\{\bar \gamma\right\}_1^{k-1},\left\{{\bf e}\right\}_1^{k-1}\right)f_{{\bf e}_{k-1}}\left({\bf e}_{k-1}\right)}{f_{{ {{\bf \gamma}_k}}}\left({{\bar {\bf \gamma}}_k}\right)f_{{\bf e}_{k-1}}\left({\bf e}_{k-1}\right)} \ \\
	& \text{[} {\bf e}_{k} \text{ is iid and stationary, and } {\bf \gamma}_k \text{ and } {\bf e}_{k-1} \text{ are uncorrelated]} \cr
	& = \frac{f_{{\widetilde {\bf \gamma}_k}}\left({{\bar {\bf \gamma}}_k}|\left\{\bar \gamma\right\}_1^{k-1},\left\{{\bf e}\right\}_1^{k-1}\right)}{f_{{ {{\bf \gamma}_k}}}\left({{\bar {\bf \gamma}}_k}\right)} \text{ [provided } f_{{\bf e}_{k-1}}\left({\bf e}_{k-1}\right) \ne 0 \text{],}
\end{align}
where ${\bar \gamma}_k={ \gamma}_k$ before the attack, and ${\bar \gamma}_k={\widetilde \gamma}_k$ after the attack. The conditional mean (${\bf \mu}_{{\widetilde {\bf \gamma}_k}|\left\{\bar \gamma\right\}_1^{k-1},\left\{{\bf e}\right\}_1^{k-1}}$) and covariance (${\bf \Sigma}_{{\widetilde {\bf \gamma}_k}|\left\{\bar \gamma\right\}_1^{k-1},\left\{{\bf e}\right\}_1^{k-1}}$) of ${\widetilde {\bf \gamma}}_k$ are derived as follows.

The innovation signal under attack from (\ref{eqn:gamma_e_attack}) can be written as (\ref{eqn:gamma_attack_apndx}) after replacing ${\bf z}_k$ by (\ref{eqn:hidden_states_main}),
\begin{equation}
	\widetilde\gamma_k ={\bf w}_{a,k-1}+{\bf A}_a{\bf z}_{k-1}-{\bf C}\left({\bf A}+{\bf B}{\bf L}\right){\bf \hat x}^F_{k-1|k-1}-{\bf C}{\bf B}{\bf e}_{k-1}.
	\label{eqn:gamma_attack_apndx}
\end{equation}
Applying (\ref{eqn:xF_k_k_1}), (\ref{eqn:add_ek}), (\ref{eqn:opt_u}) in (\ref{eqn:kf_state_eqn_attack}) we can write, 
\begin{equation}
	{\hat{\bf{x}}}^F_{k|k}=\left({\bf A}+{\bf B}{\bf L}\right){\hat{\bf{x}}}^F_{k-1|k-1}+{\bf B}{\bf e}_{k-1}+{\bf K}{\widetilde \gamma}_{k-1}. 
	\label{eqn:xf_k_k_apndx_1}
\end{equation}
Using (\ref{eqn:xf_k_k_apndx_1}) recursively we get,
\begin{align}
	&{\hat{\bf{x}}}^F_{k|k}=\left({\bf A}+{\bf B}{\bf L}\right)^{k-1}{\hat{\bf{x}}}_{1|1} \cr 
	&+\sum_{i=1}^{k-1}\left({\bf A}+{\bf B}{\bf L}\right)^{i-1}\left({\bf B}{\bf e}_{k-i-1}+{\bf K}{\bar \gamma}_{k-i} \right) \cr
	& \text{where } {\bar \gamma}_k = { \gamma}_k \text{ for } k<\nu, {\bar \gamma}_k = {\widetilde \gamma}_k \text{ otherwise}.
	\label{eqn:xf_k_k_apndx_2}
\end{align}
Applying (\ref{eqn:gamma_e_attack}) and (\ref{eqn:xf_k_k_apndx_2}) in (\ref{eqn:gamma_attack_apndx}) we get,
\begin{align}
	&\widetilde\gamma_k ={\bf w}_{a,k-1}+\left({\bf A}_a{\bf C}-{\bf C}\left({\bf A}+{\bf B}{\bf L}\right)\right)\left(\left({\bf A}+{\bf B}{\bf L}\right)^{k-2}{\hat{\bf{x}}}_{1|1} \right. \nonumber \\
	& \left. + \sum_{i=1}^{k-2}\left({\bf A}+{\bf B}{\bf L}\right)^{i-1}{\bf B}{\bf e}_{k-i-1}+\sum_{i=2}^{k-2}\left({\bf A}+{\bf B}{\bf L}\right)^{i-1}{\bf K}{\bar \gamma}_{k-i}\right) \nonumber \\ 
	&-{\bf C}{\bf B}{\bf e}_{k-1}+\left({\bf A}_a-{\bf C}\left({\bf A}+{\bf B}{\bf L}\right){\bf K}\right){\bar {\bf \gamma}}_{k-1} \label{eqn:gamma_attack_apndx_2}.
\end{align}
Since we have assumed that the system started at $k=-\infty$, and $\left({\bf A}+{\bf B}{\bf L}\right)$ is strictly stable, we can say $\left({\bf A}+{\bf B}{\bf L}\right)^{k-2} \approx {\bf 0}$, and (\ref{eqn:gamma_attack_apndx_2}) will take the following form 
\begin{align}
	& \widetilde\gamma_k ={\bf w}_{a,k-1}+\left({\bf A}_a{\bf C}-{\bf C}\left({\bf A}+{\bf B}{\bf L}\right)\right) \cr 
	&\left(  \sum_{i=1}^{k-2}\left({\bf A}+{\bf B}{\bf L}\right)^{i-1}{\bf B}{\bf e}_{k-i-1}+\sum_{i=2}^{k-2}\left({\bf A}+{\bf B}{\bf L}\right)^{i-1}{\bf K}{\bar \gamma}_{k-i}\right) \cr 
	& -{\bf C}{\bf B}{\bf e}_{k-1}+\left({\bf A}_a-{\bf C}\left({\bf A}+{\bf B}{\bf L}\right){\bf K}\right){\bar {\bf \gamma}}_{k-1}.
	\label{eqn:gamma_attack_apndx_3}
\end{align}

Therefore,
\begin{align}
	&{\bf \mu}_{{\widetilde {\bf \gamma}_k}|\left\{\bar \gamma\right\}_1^{k-1},\left\{{\bf e}\right\}_1^{k-1}} =E\left[{\widetilde {\bf \gamma}_k}|{\bf z}_{k-1},{\bf {\hat x}}^F_{k-1|k-1},{\bf e}_{k-1}\right] \cr
	&={\bf A}_a{\bf z}_{k-1}-{\bf C}\left({\bf A}+ {\bf B}{\bf L} \right){\bf {\hat x}}^F_{k-1|k-1} - {\bf C}{\bf B}{\bf e}_{k-1}, \text{and} \\ 
	&{\bf \Sigma}_{{\widetilde {\bf \gamma}_k}|\left\{\bar \gamma\right\}_1^{k-1},\left\{{\bf e}\right\}_1^{k-1}}=cov\left({\widetilde {\bf \gamma}_k}|{\bf z}_{k-1},{\bf {\hat x}}^F_{k-1|k-1},{\bf e}_{k-1}\right)={\bf Q}_a. \cr
\end{align}
Furthermore, using (\ref{eqn:gamma_e}) we obtain $E\left[\gamma_k \right]=0$ and 
\begin{align}
	\gamma_k&={\bf y}_k-{\bf C}{\hat {\bf x}}_{k|k-1}={\bf C}\left({\bf x}_k-{\hat {\bf x}}_{k|k-1}\right)+{\bf v}_k \text{, and} \nonumber \\
	{\bf \Sigma}_\gamma &= E\left[\gamma_k\gamma_k^T\right] ={\bf C}{\bf P}{\bf C}^T+{\bf R}. \label{eqn:Sigma_gamma_apn_corr1p1}
\end{align}

\section{Proof of Corollary~\ref{cor:subopt_cusum} }
\label{apdx:subopt_cusum}
The covariance matrix ($E\left[{\widetilde \gamma}_k{\bf e}_{k-1}^T\right]$) between $\widetilde \gamma_k$ (\ref{eqn:gamma_e_attack}) and ${\bf e}_{k-1} $ is evaluated as, 
\begin{align}
	E\left[{\widetilde \gamma}_k{\bf e}_{k-1}^T\right] = E\left[-{\bf C}{\bf B}{\bf e}_{k-1}{\bf e}_{k-1}^T\right]=-{\bf C}{\bf B}{\bf \Sigma}_e, \label{eqn:cov_gamma_attack_ek_cor11}
\end{align}
since ${\bf e}_{k-1}$ is uncorrelated with ${\bf z}_k$ and ${\hat {\bf x}}_{k-1|k-1}^F$.

\section{Proof of Lemma~\ref{lemma:sigma_gammas}}
\label{apdx:sigma_sq_gamma_attack}
The variance of the innovation signal (${\bf \Sigma}_{\widetilde \gamma}$) when the system is under attack is derived in this section. Using (\ref{eqn:gamma_e_attack}), and applying the knowledge that ${\bf e}_{k-1}$ is uncorrelated with ${\bf z}_k$ and ${\hat {\bf x}}_{k-1|k-1}^F$, we get the following expression of ${\bf \Sigma}_{\widetilde \gamma}$,
\begin{align}
	&{\bf \Sigma}_{\widetilde \gamma}=E\left[{\widetilde \gamma}_k{\widetilde \gamma}_k^T\right]= E\left[{\bf z}_k{\bf z}_k^T\right]-{\bf C}\left({\bf A}+{\bf B}{\bf L}\right)E\left[{\hat {\bf x}}_{k-1|k-1}^F{\bf z}_k^T\right] \cr
	&-\left({\bf C}\left({\bf A}+{\bf B}{\bf L}\right)E\left[{\hat {\bf x}}_{k-1|k-1}^F{\bf z}_k^T\right]\right)^T+{\bf C}{\bf B}{\bf \Sigma}_e{\bf B}^T{\bf C}^T  \nonumber \\
	&+{\bf C}\left({\bf A}+{\bf B}{\bf L}\right)E\left[{\hat {\bf x}}_{k-1|k-1}^F\left({\hat {\bf x}}_{k-1|k-1}^{F}\right)^T\right]\left({\bf A}+{\bf B}{\bf L}\right)^T{\bf C}^T. \cr
	\label{eqn:Sigma_gamma_attack_stage_1}
\end{align}
We first derive the expressions of $E\left[{\hat {\bf x}}_{k-1|k-1}^F{\bf z}_k^T\right]$ (\ref{eqn:Exz_1_series}) and $E\left[{\hat {\bf x}}_{k-1|k-1}^F\left({\hat {\bf x}}_{k-1|k-1}^{F}\right)^T\right]$ (\ref{eqn:ExFxF_apn}), and then use them to get the final expression of ${\bf \Sigma}_{\widetilde \gamma}$ (\ref{eqn:sigma_gamma_sq_final_apndx}). $E\left[{\hat {\bf x}}_{k-1|k-1}^F{\bf z}_k^T\right]$ is calculated using (\ref{eqn:xF_k_k_1})-(\ref{eqn:gamma_k_attack}) and (\ref{eqn:add_ek}) as follows. First note that
\begin{align}
	&{\hat {\bf x}}_{k-1|k-1}^F={\bf K}{\bf z}_{k-1}+\mathcal{A}{\hat {\bf x}}_{k-2|k-2}^F +\left({\bf I}_n-{\bf K}{\bf C}\right){\bf B}{\bf e}_{k-2}, \nonumber \\
	&\text{where }\mathcal{A}=\left({\bf I}_n-{\bf K}{\bf C}\right)\left({\bf A}+{\bf B}{\bf L}\right).
	\label{eqn:xF_k_1_k_1}
\end{align}
We define ${\bf E}_{xz}\left(-k_0\right) \triangleq E\left[{\hat {\bf x}}_{k-k_0|k-k_0}^F{\bf z}_k^T\right]$, 
\begin{align}
	\begin{split}
		&=E\left[\left({\bf K}{\bf z}_{k-k_0}+\mathcal{A}{\hat {\bf x}}_{k-k_0-1|k-k_0-1}^F \right.\right.  \\
		&+\left.\left.\left({\bf I}_n-{\bf K}{\bf C}\right){\bf B}{\bf e}_{k-k_0-1}\right){\bf z}_{k}^T \right] \text{, [using (\ref{eqn:xF_k_1_k_1})]}  \cr
		&={\bf K}{\bf E}_{zz}\left(-k_0\right)+\mathcal{A}{\bf E}_{xz}\left(-k_0-1\right),
	\end{split}
	\label{eqn:Exz_k0}
\end{align}
where ${\bf e}_{k-k_0-1}$ and ${\bf z}_{k}$ are uncorrelated, and ${\bf E}_{zz}\left(-k_0\right)$ is evaluated as follows. 
\begin{align}
	&{\bf E}_{zz}\left(-k_0\right)={\bf E}_{zz}\left(k_0\right)=E\left[{\bf z}_{k}{\bf z}_{k-k_0}^T\right], \cr
	&{\bf E}_{zz}\left(-1\right)=E\left[{\bf A}_a{\bf z}_{k-1}{\bf z}_{k-1}^T+{\bf w}_{a,k-1}{\bf z}_{k-1}^T\right] ={\bf A}_a{\bf E}_{zz}\left(0\right), \cr
	&\text{because }{\bf w}_{a,k} \text{ and }{\bf z}_{k} \text{ are uncorrelated. Similarly,}\cr
	&{\bf E}_{zz}\left(-2\right)={\bf A}_a{\bf E}_{zz}\left(-1\right)={\bf A}_a^2{\bf E}_{zz}\left(0\right) \text{, and} \cr
	&{\bf E}_{zz}\left(-k_0\right) = {\bf A}_a^{k_0}{\bf E}_{zz}\left(0\right).
	\label{eqn:Ezz_k0}
\end{align}
The system matrix ${\bf A}_a$ is assumed to be strictly stable because the attacker will always try to generate fake observations which are bounded and will mimic the true observations to remain stealthy. For a strictly stable ${\bf A}_a$, 
\begin{align}
	&{\bf A}_a^{k_0} \rightarrow 0 \text{, as } k_0 \rightarrow \infty. \cr
	&\text{Therefore, }{\bf E}_{zz}\left(-k_0\right) \rightarrow 0 \text{, as } k_0 \rightarrow \infty.
	\label{eqn:Exa_infty}
\end{align}
Using (\ref{eqn:Exz_k0}) and (\ref{eqn:Ezz_k0}), we can write the expression of ${\bf E}_{xz}(-1)$ as
\begin{align}
	&{\bf E}_{xz}\left(-1\right)={\bf K}{\bf E}_{zz}\left(-1\right)+\mathcal{A}{\bf E}_{xz}\left(-2\right) \cr
	&={\bf K}{\bf A}_a^{}{\bf E}_{zz}\left(0\right)+\mathcal{A}\left({\bf K}{\bf E}_{zz}\left(-2\right)+\mathcal{A}{\bf E}_{xz}\left(-3\right) \right) \cr  
	& \text{[after replacing } {\bf E}_{xz}\left(-2\right) \text{ using (\ref{eqn:Exz_k0})]} \cr
	&={\bf K}{\bf A}_a^{}{\bf E}_{zz}\left(0\right)+\mathcal{A}{\bf K}{\bf A}_a^{2}{\bf E}_{zz}\left(0\right)+\mathcal{A}^2{\bf E}_{xz}\left(-3\right).
	\label{eqn:Exz_1}
\end{align}
Repeating the same technique, ${\bf E}_{xz}\left(-1\right)$ will take the following form,
\begin{equation}
	{\bf E}_{xz}\left(-1\right)= \sum_{i=0}^\infty\mathcal{A}^{i}{\bf K}{\bf C}_a{\bf A}_a^{i+1}{\bf E}_{x_a}\left(0\right){\bf C}_a^T.
	\label{eqn:Exz_1_series}
\end{equation}
${\bf E}_{xz}\left(-1\right)$ can be evaluated numerically by taking a large number of terms for the summation (\ref{eqn:Exz_1_series}), until the rest of the terms become negligible. 
${\bf E}_{x^Fx^F}(0) = E\left[{\hat {\bf x}}_{k-1|k-1}^F\left({\hat {\bf x}}_{k-1|k-1}^{F}\right)^T\right]$ is evaluated using (\ref{eqn:xF_k_1_k_1}) as
\begin{align}
	&{\bf E}_{x^Fx^F}(0)= {\bf K}E\left[{\bf z}_{k-1}{\bf z}_{k-1}^T\right]{\bf K}^T+\mathcal{A}E\left[{\hat {\bf x}}_{k-2|k-2}^F{\bf z}_{k-1}^T\right]{\bf K}^T \cr
	&+\left(\mathcal{A}E\left[{\hat {\bf x}}_{k-2|k-2}^F{\bf z}_{k-1}^T\right]{\bf K}^T\right)^T  \cr
	& +\mathcal{A}E\left[{\hat {\bf x}}_{k-2|k-2}^F\left({\hat {\bf x}}_{k-2|k-2}^{F}\right)^T\right]\mathcal{A}^T \nonumber \\
	&+\left({\bf I}_n-{\bf K}{\bf C}\right){\bf B}E\left[{\bf e}_{k-2}{\bf e}_{k-2}^T\right]{\bf B}^T\left({\bf I}_n-{\bf K}{\bf C}\right)^T.
	\label{eqn:ExFxF}
\end{align}
Therefore, ${\bf E}_{x^Fx^F}(0)$ is the solution to the following Lyapunov equation,
\begin{align}
	&\mathcal{A}{\bf E}_{x^Fx^F}(0)\mathcal{A}^T-{\bf E}_{x^Fx^F}(0)+{\bf K}{\bf E}_{zz}(0){\bf K}^T \cr
	&+\mathcal{A}{\bf E}_{xz}(-1){\bf K}^T+\left(\mathcal{A}{\bf E}_{xz}(-1){\bf K}^T\right)^T + \\
	&\left({\bf I}_n-{\bf K}{\bf C}\right){\bf B}{\bf \Sigma}_e{\bf B}^T\left({\bf I}_n-{\bf K}{\bf C}\right)^T = 0 \text{, [(\ref{eqn:Ezz_k0}) used]}. \nonumber
\end{align}
${\bf E}_{x^Fx^F}(0)$ is divided into two parts, ${\bf \Sigma}_{x^Fz}$ and ${\bf \Sigma}_{x^Fe}$ which are independent of the watermarking signal and the fake observations, respectively. ${\bf \Sigma}_{x^Fz}$ and ${\bf \Sigma}_{x^Fe}$ are the solution to the following Lyapunov equations, 
\begin{align}
	&\mathcal{A}{\bf \Sigma}_{x^Fz}\mathcal{A}^T-{\bf \Sigma}_{x^Fz}+{\bf K}{\bf E}_{zz}(0){\bf K}^T+\mathcal{A}{\bf E}_{xz}(-1){\bf K}^T \cr
	&+\left(\mathcal{A}{\bf E}_{xz}(-1){\bf K}^T\right)^T = 0 \text{,} \cr
	&\mathcal{A}{\bf \Sigma}_{x^Fe}\mathcal{A}^T-{\bf \Sigma}_{x^Fe}+\left({\bf I}_n-{\bf K}{\bf C}\right){\bf B}{\bf \Sigma}_e{\bf B}^T\left({\bf I}_n-{\bf K}{\bf C}\right)^T = 0 \text{,} \cr
	&\text{and }{\bf E}_{x^Fx^F}(0)={\bf \Sigma}_{x^Fz}+{\bf \Sigma}_{x^Fe}. 
	\label{eqn:ExFxF_apn}
\end{align}
Using (\ref{eqn:Ezz_k0}) and (\ref{eqn:ExFxF_apn}), we can rewrite the expression for ${\bf \Sigma}_{\widetilde \gamma} $ as,
\begin{align}
	{\bf \Sigma}_{\widetilde \gamma}&={\bf E}_{zz}(0)-{\bf C}({\bf A}+{\bf B{\bf L)}}{\bf E}_{xz}(-1) \cr
	&-\left[{\bf C}({\bf A}+{\bf B{\bf L)}}{\bf E}_{xz}(-1) \right]^T+{\bf C}{\bf B}{\bf \Sigma}_e{\bf B}^T{\bf C}^T  \cr
	&+{\bf C}({\bf A}+{\bf B}{\bf L}){\bf \Sigma}_{x^Fz}({\bf A}+{\bf B}{\bf L})^T{\bf C}^T \cr
	&+{\bf C}({\bf A}+{\bf B}{\bf L}){\bf \Sigma}_{x^Fe}({\bf A}+{\bf B}{\bf L})^T{\bf C}^T.
	\label{eqn:sigma_gamma_sq_final_apndx}
\end{align}


\section{Proof of Corollary~\ref{cor:Exzm1}}
\label{apdx:kld_corr} We can simplify ${\bf E}_{xz}\left(-1\right)$ with the assumption that both $\mathcal{A}$ and ${\bf A}_a$ are diagonalizable. 
If $\mathcal{A}$ and ${\bf A}_a$ are diagonalizable, then the $i$-th element of the expression for ${\bf E}_{xz}\left(-1\right)$, \ie, $\mathcal{A}^{i}{\bf K}{\bf A}_a^{i+1}{\bf E}_{zz}\left(0\right)$, will take the following form,
\begin{align}
	&{\bf U}_{ \mathcal{A}}{\bf \Sigma}_{ \mathcal{A}}^{i}{\bf U}_{ \mathcal{A}}^{-1}{\bf K}{\bf U}_a{\bf \Sigma}_a^{i}{\bf U}_a^{-1}{\bf A}_a{\bf E}_{zz}(0) \text{ [} \mathcal{A} \text{ and } {\bf A}_a \text{ replaced}  \nonumber \\
	& \text{by eigenvalue decompositions, (\ref{eqn:svdscriptA}) and (\ref{eqn:svdAa})]} \cr
	&={\bf U}_{ \mathcal{A}}{\bf \Sigma}_{ \mathcal{A}}^{i}{\bf T}{\bf \Sigma}_a^{i}{\bf U}_a^{-1}{\bf A}_a{\bf E}_{zz}(0) \text{, [} i=0,\ \cdots\ ,\infty \text{]} 
	\label{eqn:Ta}
\end{align}
where ${\bf T} = {\bf U}_{ \mathcal{A}}^{-1}{\bf K}{\bf U}_a$. ${\bf T}_a$ is defined as
\begin{equation}
	{\bf T}_a \triangleq \sum_{i=0}^\infty{\bf \Sigma}_{ \mathcal{A}}^{i}{\bf T}{\bf \Sigma}_a^{i}.
\end{equation}
The $jk$-th element of the matrix ${\bf T}_a$ will be as follows
\begin{equation}
	\left[{\bf T}_a\right]_{jk}=\sum_{i=0}^\infty\left[{\bf T}\right]_{jk}\lambda_{\mathcal{A},j}^{i}\lambda_{a,k}^{i} =\frac{\left[{\bf T}\right]_{jk}}{1-\lambda_{\mathcal{A},j}\lambda_{a,k}}
	\label{eqn:Ta_jk}
\end{equation}
where $[.]_{jk}$ denotes the $j$-th row and $k$-th column element of a matrix. $\lambda_{\mathcal{A},j}$ and $\lambda_{a,k}$ are the $j$-th and $k$-th diagonal element of the diagonal matrices ${\bf \Sigma}_{\mathcal{A}}$ and ${\bf \Sigma}_{{a}}$ respectively. We assume $\mathcal{A}$ and ${\bf A}_a$ to be strictly stable, therefore, $|{\bf \lambda}_{\mathcal{A},j}|<1$ and $|{\bf \lambda}_{{a},k}|<1$. $|.|$ denotes the absolute value of a scalar.
Using (\ref{eqn:Ta_jk}), we can write
\begin{align}
	{\bf E}_{xz}\left(-1\right)={\bf U}_{\mathcal{A}}{\bf T}_a{\bf U}_a^{-1}{\bf A}_a{\bf E}_{zz}(0). 
	\label{eqn:Exz_1_apn}
\end{align}

\section{Proof of Theorem~\ref{th:th_kld}}
\label{apdx:kld_mimo}
This section provides the proof of the Theorem~\ref{th:th_kld} under the optimal CUSUM and sub-optimal CUSUM test. The KLDs for both the cases are derived using the general expression of KLD between two multivariate normal distributions given in \cite{Duchi1001}.
Using (\ref{eqn:f_mean_attack_dept}), (\ref{eqn:f_gamma_attack_dept}) and (\ref{eqn:gamma_attack_apndx}), and considering that ${\bf e}_{k}$ and ${\bf w}_{a,k}$ are uncorrelated with ${\bf z}_{k}$ and ${\bf {\hat x}}^F_{k|k}$, and also with each other, we can write, 
\begin{align}
	{\bf \Sigma}_{\widetilde \gamma}={\bf Q}_a+E\left[{\bf \mu}_{{\widetilde {\bf \gamma}_k}|\left\{\bar \gamma\right\}_1^{k-1},\left\{{\bf e}\right\}_1^{k-1}}{\bf \mu}_{{\widetilde {\bf \gamma}_k}|\left\{\bar \gamma\right\}_1^{k-1},\left\{{\bf e}\right\}_1^{k-1}}^T\right].
	\label{eqn:sigma_sq_gamma_dep_apn}
\end{align}
The expected KLD $E\left[ D\left(f_{{\widetilde {\bf \gamma}_k}},f_{{ {\bf \gamma}_k}} |\left\{\bar \gamma\right\}_1^{k-1},\left\{{\bf e}\right\}_1^{k-1}\right)\right]$ under the optimal CUSUM test is derived as follows using \cite{Duchi1001}, see (\ref{eqn:opt_kld_apn}).

\begin{table*}\centering
	\begin{tabular}{  m{10cm} }\hline
		{\begin{flalign}
				&E\left[\frac{1}{2}\left(tr\left({\bf \Sigma}_{{\bf \gamma}}^{-1}{\bf \Sigma}_{{\widetilde {\bf \gamma}_k}|\left\{\bar \gamma\right\}_1^{k-1},\left\{{\bf e}\right\}_1^{k-1}}\right) - m +{\bf \mu}_{{\widetilde {\bf \gamma}_k}|\left\{\bar \gamma\right\}_1^{k-1},\left\{{\bf e}\right\}_1^{k-1}} ^T{\bf \Sigma}_{{\bf \gamma}}^{-1}{\bf \mu}_{{\widetilde {\bf \gamma}_k}|\left\{\bar \gamma\right\}_1^{k-1},\left\{{\bf e}\right\}_1^{k-1}} -\log\frac{\left| {\bf \Sigma}_{{\widetilde {\bf \gamma}_k}|\left\{\bar \gamma\right\}_1^{k-1},\left\{{\bf e}\right\}_1^{k-1}} \right|}{\left|{\bf \Sigma}_{{\bf \gamma}} \right|}\right)\right] \nonumber \\
				&=\frac{1}{2}\left(-m + tr\left({\bf \Sigma}_{{\bf \gamma}}^{-1}{\bf \Sigma}_{{\widetilde {\bf \gamma}_k}|\left\{\bar \gamma\right\}_1^{k-1},\left\{{\bf e}\right\}_1^{k-1}} +{\bf \Sigma}_{{\bf \gamma}}^{-1}E\left[{\bf \mu}_{{\widetilde {\bf \gamma}_k}|\left\{\bar \gamma\right\}_1^{k-1},\left\{{\bf e}\right\}_1^{k-1}}{\bf \mu}_{{\widetilde {\bf \gamma}_k}|\left\{\bar \gamma\right\}_1^{k-1},\left\{{\bf e}\right\}_1^{k-1}} ^T\right]\right)  -\log\frac{\left| {\bf \Sigma}_{{\widetilde {\bf \gamma}_k}|\left\{\bar \gamma\right\}_1^{k-1},\left\{{\bf e}\right\}_1^{k-1}} \right|}{\left|{\bf \Sigma}_{{\bf \gamma}} \right|}\right) \nonumber \\
				& =\frac{1}{2}\left\{tr\left({\bf \Sigma}_\gamma^{-1} {\bf \Sigma}_{\widetilde \gamma}\right) -m - \log\frac{\mid{\bf Q}_a\mid}{\mid{\bf \Sigma}_{\gamma}\mid}\right\} \text{, [using (\ref{eqn:sigma_sq_gamma_dep_apn}) \& (\ref{eqn:f_gamma_attack_dept})]}. \label{eqn:opt_kld_apn}
		\end{flalign}}\\
		\hline \end{tabular}\label{NS_eqt}\end{table*}
Similarly, the KLD $D\left(f_{{\widetilde {\bf \gamma}_k},{\bf e}_{k-1}},f_{{ {\bf \gamma}_k},{\bf e}_{k-1}}\right)$ under the sub-optimal CUSUM test will take the following form \cite{Duchi1001},
\begin{align}
	\frac{1}{2}\left( \log \frac{\left|{\bf \Sigma}_{\gamma_e}\right|}{\left|{\bf \Sigma}_{\widetilde\gamma_e}\right|} -p-m + tr\left({\bf \Sigma}_{\gamma_e}^{-1}{\bf \Sigma}_{\widetilde\gamma_e} \right) \right).
	\label{eqn:kld_full_apn}
\end{align}
The term $\log \frac{\left|{\bf \Sigma}_{\gamma_e}\right|}{\left|{\bf \Sigma}_{\widetilde\gamma_e}\right|}$ is evaluated as follows,
\begin{align}
	&\left|{\bf \Sigma}_{\gamma_e}\right|=\left|{\bf \Sigma}_{e}\right|\left|{\bf \Sigma}_{\gamma}\right| \text{, [using (\ref{eqn:sgima_sq_gamma_e})]}  \label{eqn:det_Sigma_gamma_e} \ \\
	&\left|{\bf \Sigma}_{\widetilde\gamma_e}\right|=\left|{\bf \Sigma}_{e}\right|\left|{\bf \Sigma}_{\widetilde \gamma}-{\bf C}{\bf B}{\bf \Sigma}_e{\bf B}^T{\bf C}^T\right| \text{, [using (\ref{eqn:sgima_sq_gamma_e_attack})]}. \label{eqn:det_Sigma_gamma_e_attack} \\
	&\text{Therefore, }\log \frac{\left|{\bf \Sigma}_{\gamma_e}\right|}{\left|{\bf \Sigma}_{\widetilde\gamma_e}\right|} = -\log\frac{\left|{\bf \Sigma}_{\widetilde \gamma}-{\bf C}{\bf B}{\bf \Sigma}_e{\bf B}^T{\bf C}^T\right|}{\left|{\bf \Sigma}_{\gamma}\right|}. 
	\label{eqn:log_sigam_apn}
\end{align}
The term $tr\left({\bf \Sigma}_{\gamma_e}^{-1}{\bf \Sigma}_{\widetilde\gamma_e} \right)$ is evaluated using (\ref{eqn:sgima_sq_gamma_e}) and (\ref{eqn:sgima_sq_gamma_e_attack}) as,
\begin{equation}
	tr\left({\bf \Sigma}_{\gamma_e}^{-1}{\bf \Sigma}_{\widetilde\gamma_e} \right) = tr\left({\bf \Sigma}_{\gamma}^{-1}{\bf \Sigma}_{\widetilde\gamma}+{\bf \Sigma}_{e}^{-1}{\bf \Sigma}_{e} \right)=tr\left({\bf \Sigma}_{\gamma}^{-1}{\bf \Sigma}_{\widetilde\gamma} \right)+p
	\label{eqn:tr_sigma_sq_gamma_apndx}
\end{equation} 
Applying (\ref{eqn:log_sigam_apn}) and (\ref{eqn:tr_sigma_sq_gamma_apndx}) in (\ref{eqn:kld_full_apn}), we get the final expression of the KLD $D\left(f_{{\widetilde {\bf \gamma}_k},{\bf e}_{k-1}},f_{{ {\bf \gamma}_k},{\bf e}_{k-1}}\right)$ under the sup-optimal CUSUM test as  
\begin{align}
	\frac{1}{2}\left\{tr\left({\bf \Sigma}_\gamma^{-1} {\bf \Sigma}_{\widetilde \gamma}\right) -m - \log\frac{\mid{{\bf \Sigma}_{\widetilde \gamma}}-{\bf C}{\bf B}{\bf \Sigma}_e{\bf B}^T{\bf C}^T\mid}{\mid{\bf \Sigma}_{\gamma}\mid}\right\}. \label{eqn:subopt_kld_apn}
\end{align}

\section{Proof of Lemma~\ref{le:kld_miso} }
\label{apdx:kld_miso}
This section provides the derivation of the expression of $\sigma_{\widetilde \gamma}^2$ for the MISO system. 
The model parameters of the fake measurement generation system (\ref{eqn:hidden_states_main})) for the MISO system will be as follows. 
\begin{equation}
	{\bf A}_a = \rho \text{, } {\bf Q}_a = \left(1 - \rho^2 \right)\sigma_z^2, \text{and } {\bf E}_{zz}(0)=\sigma_z^2.
	\label{eqn:fake_meas_params}
\end{equation}
To evaluate $\sigma_{\widetilde \gamma}^2$, we derive the expression for ${\bf E}_{xz}(-1)$ for a MISO system using (\ref{eqn:Exz1_original}) as
\begin{align}
	&{\bf E}_{xz}(-1)= \sum_{i=0}^\infty\mathcal{A}^{i}{\bf K}{\bf A}_a^{i+1}{\bf E}_{zz} (0) \nonumber \\
	&=\sum_{i=0}^\infty\mathcal{A}^{i}{\bf K}\rho^{i+1}\sigma_z^2  \text{, [} {\bf E}_{zz}\left(0\right)=\sigma_z^2,\ {\bf A}_a=\rho \text{]} \nonumber \\
	& = \left[{\bf I}_n-\rho{\mathcal A}\right]^{-1}{\bf K}\rho\sigma_z^2 \text{, [}  {\mathcal A} \text{ is strictly stable, }\rho <1 \text{]}.
	\label{eqn:Exz_miso_apn}
\end{align}
$\sigma_{\widetilde \gamma}^2$ will be as follows,
\begin{align}
	&\sigma_{\widetilde \gamma}^2=\sigma_z^2-2{\bf C}\left({\bf A}+{\bf B}{\bf L}\right){\bf E}_{xz}(-1)+{\bf C}{\bf B}{\bf \Sigma}_e{\bf B}^T{\bf C}^T\cr 
	&+{\bf C}\left({\bf A} +{\bf B}{\bf L}\right){\bf \Sigma}_{x^Fz}\left({\bf A}+{\bf B}{\bf L}\right)^T{\bf C}^T \cr
	&+{\bf C}\left({\bf A}+{\bf B}{\bf L}\right){\bf \Sigma}_{x^Fe}\left({\bf A}+{\bf B}{\bf L}\right)^T{\bf C}^T \text{ [using (\ref{eqn:sigma_gamma_attack})]},
	\label{eqn:sigma_gamma_sq_attack_apm_part1}
\end{align}
where ${\bf \Sigma}_{x^Fz}$ and ${\bf \Sigma}_{x^Fe}$ are derived from (\ref{eqn:ExFxF_th1p1}) and (\ref{eqn:ExFxF_th1p2}) respectively as follows. 
\begin{equation}
	{\bf \Sigma}_{x^Fz}={\bf \Sigma}^z_{x^F}\sigma_z^2
	\label{eqn:Sigma_xfz_miso_apn}
\end{equation}
where ${\bf \Sigma}^z_{x^F}$ is the solution to the following Lyapunov equation, 
\begin{align}
	&{\bf \mathcal A}{\bf \Sigma}^z_{x^F}{\bf \mathcal A}^T-{\bf \Sigma}^z_{x^F}+ {\bf K}{\bf K}^T+{\bf \mathcal{A}}\left[{\bf I}_n-\rho{\bf \mathcal A}\right]^{-1}{\bf K}{\bf K}^T\rho \cr
	&+\left[{\bf \mathcal{A}}\left[{\bf I}_n-\rho{\bf \mathcal A}\right]^{-1}{\bf K}{\bf K}^T\rho\right]^T=0.
\end{align}
${\bf \Sigma}_{x^Fe}$  is the solution to the following Lyapunov equation,
\begin{align}
	{\bf \mathcal A}{\bf \Sigma}_{x^Fe}{\bf \mathcal A}^T-{\bf \Sigma}_{x^Fe}+\left({\bf I}_n - {\bf K}{\bf C}\right){\bf B}{\bf \Sigma}_e{\bf B^T}\left({\bf I}_n - {\bf K}{\bf C}\right)^T =0.
\end{align}
Using (\ref{eqn:Exz_miso_apn}) and (\ref{eqn:Sigma_xfz_miso_apn}), the expression for $\sigma_{\widetilde \gamma}^2$ (\ref{eqn:sigma_gamma_sq_attack_apm_part1}) can be rearranged as follows. 
\begin{align}
	& \sigma_{\gamma}^2=\left(1-2{\bf C}\left({\bf A}+{\bf B}{\bf L}\right)\left({\bf I}_n-\rho\mathcal{A}\right)^{-1}{\bf K} \rho \right. \cr
	&\left. + {\bf C}\left({\bf A}+{\bf B}{\bf L}\right){\bf \Sigma}^z_{x^F}\left({\bf A}+{\bf B}{\bf L}\right)^T{\bf C}^T\right)\sigma_z^2 \cr
	&+\left({\bf C}\left({\bf A}+{\bf B}{\bf L}\right){\bf \Sigma}_{xe}\left({\bf A}+{\bf B}{\bf L}\right)^T{\bf C}^T +{\bf C}{\bf B}{\bf \Sigma}_e{\bf B}^T{\bf C}^T \right) \cr
	&=M_z\sigma_z^2+M_t
	\label{eqn:sigma_gamma_sq_attack_apm_part2}
\end{align}
The scalar quantity $M_t$ can be rearranged as follows.
\begin{align}
	&M_t = \left(\sum_{t=0}^\infty{\bf C}\left({\bf A}+{\bf B}{\bf L}\right) \mathcal{A}^t\left({\bf I}_n - {\bf K}{\bf C}\right){\bf B}{\bf \Sigma}_e{\bf B^T}\left({\bf I}_n - {\bf K}{\bf C}\right)^T \right. \cr
	& \left. \left[\mathcal{A}^{T}\right]^{t}\left({\bf A}+{\bf B}{\bf L}\right)^T{\bf C}^T \right)+{\bf C}{\bf B}{\bf \Sigma}_e{\bf B}^T{\bf C}^T  \cr
	&=\text{tr}\left(\sum_{t=0}^\infty{\bf B^T}\left({\bf I}_n - {\bf K}{\bf C}\right)^T \left[\mathcal{A}^{T}\right]^{t}\left({\bf A}+{\bf B}{\bf L}\right)^T{\bf C}^T{\bf C}\left({\bf A}+{\bf B}{\bf L}\right) \right. \nonumber \\
	& \left. \mathcal{A}^t\left({\bf I}_n - {\bf K}{\bf C}\right){\bf B}{\bf \Sigma}_e +{\bf B}^T{\bf C}^T{\bf C}{\bf B}{\bf \Sigma}_e \right) = \text{tr}\left(M_e{\bf \Sigma}_e \right), \\
	&\text{where }M_e = {\bf B^T}\left({\bf I}_n - {\bf K}{\bf C}\right)^T{\bf \Sigma}^e_{x^F}\left({\bf I}_n - {\bf K}{\bf C}\right){\bf B}+{\bf B}^T{\bf C}^T{\bf C}{\bf B}.
\end{align}
${\bf \Sigma}^e_{x^F}$ is the solution to the following Lyapunov equation, 
\begin{equation}
	\mathcal{A}^{T}{\bf \Sigma}^e_{x^F}\mathcal{A}-{\bf \Sigma}^e_{x^F}+\left({\bf A}+{\bf B}{\bf L}\right)^T{\bf C}^T{\bf C}\left({\bf A}+{\bf B}{\bf L}\right)=0.
	\label{eqn:Me_apn}
\end{equation}
Finally, we can write $\sigma_{\widetilde \gamma}^2$ as
\begin{equation}
	\sigma_{\widetilde \gamma}^2 = M_z\sigma_z^2+\text{tr}\left(M_e{\bf \Sigma}_e\right).
\end{equation}

\section{Proof of Theorem~\ref{th:opt_diagonal_Sigma_e}}
\label{apdx:optimization}
The covariance matrix of the watermarking signal is decomposed using eigenvalue decomposition as follows,
\begin{equation}
	{\bf \Sigma}_e={\bf V}_e{\bf \Lambda}_e{\bf V}_e^T
	\label{eqn:sigma_e_eig_apn}
\end{equation}
where ${\bf V}_e$ and ${\bf \Lambda}_e$ are the eigenvector matrix and the diagonal eigenvalue matrix. In this section, we will prove that KLD is convex with respect to the elements of ${\bf \Lambda}_e$ for a fixed ${\bf V}_e$. We formulate the optimization problem as follows. 
\begin{align}
	\max_{{\bf \Lambda}_e} &\  f\left({\bf \Lambda}_e\right) = \  E\left[ D\left(f_{{\widetilde {\bf \gamma}_k}},f_{{ {\bf \gamma}_k}} |\left\{\bar \gamma\right\}_1^{k-1},\left\{{\bf e}\right\}_1^{k-1}\right)\right] \text{or}  \nonumber \\
	\max_{{\bf \Lambda}_e} &\  f\left({\bf \Lambda}_e\right) = \  D\left(f_{\widetilde \gamma_k,{\bf e}_{k-1}},f_{ \gamma_k,{\bf e}_{k-1}}\right) \\
	\textrm{s.t.}\  & \Delta LQG \le J  \label{eqn:const_lambda_apn}\\
	\textrm{and}\ & {\bf \lambda}_{e,i} \ge 0, \forall i. \label{eqn:const_lambda_i_apn}
\end{align}
The proof for the optimal CUSUM case is as follows. 

Observing (\ref{eqn:opt_kld}) and (\ref{eqn:sigma_gamma_attack}), we can say that maximizing the expected KLD with respect to ${\bf \Sigma}_e$ is the same as maximizing the following portion of the expected KLD expression which is only dependent on ${\bf \Sigma}_e$. 
\begin{align}
	f\left({\bf \Sigma}_e\right)={\bf C}({\bf A}+{\bf B}{\bf L}){\bf \Sigma}_{x^Fe}({\bf A}+{\bf B}{\bf L})^T{\bf C}^T+{\bf C}{\bf B}{\bf \Sigma}_e{\bf B}^T{\bf C}^T 
	\label{eqn:fun_opt_d_cusum_apn}
\end{align}
where ${\bf \Sigma}_{x^Fe}$ is given by (\ref{eqn:ExFxF_th1p2}). Putting the solution of (\ref{eqn:ExFxF_th1p2}) in (\ref{eqn:fun_opt_d_cusum_apn}), we get,
\begin{align}
	&f\left({\bf \Sigma}_e\right)={\bf C}\left({\bf A}+{\bf B}{\bf L} \right)\left(\sum_{t=0}^\infty{\cal A}^t\left({\bf I}_n - {\bf K}{\bf C}\right){\bf B}{\bf \Sigma}_e{\bf B}^T  \right. \nonumber \\
	& \left. \left({\bf I}_n - {\bf K}{\bf C}\right)^T\left[{\cal A}^T\right]^t \right)\left({\bf A}+{\bf B}{\bf L} \right)^T+{\bf C}{\bf B}{\bf \Sigma}_e{\bf B}^T{\bf C}^T  \nonumber \\
	&=tr\left(\left({\bf B}^T\left({\bf I}_n - {\bf K}{\bf C}\right)^T{\cal L}_e\left({\bf I}_n - {\bf K}{\bf C}\right){\bf B}+{\bf B}^T{\bf C}^T{\bf C}{\bf B}\right){\bf \Sigma}_e\right) \nonumber \\
	& = tr\left({\bf H}_{KLD}{\bf \Sigma}_e \right), 
	\label{eqn:fun_opt_d_cusum_apn_2}
\end{align}
where ${\cal L}_e$ is the solution to the following Lyapunov equation 
\begin{align}
	&{\cal A}^T{\cal L}_e{\cal A}-{\cal L}_e+\left({\bf A}+{\bf B}{\bf L} \right)^T{\bf C}^T{\bf C}\left({\bf A}+{\bf B}{\bf L} \right)=0, \text{and} 
	\label{eqn:cal_L_e_apn} \\
	& {\bf H}_{KLD} = {\bf B}^T \left({\bf I}_n - {\bf K}{\bf C}\right)^T{\cal L}_e\left({\bf I}_n - {\bf K}{\bf C}\right){\bf B}+{\bf B}^T{\bf C}^T{\bf C}{\bf B}.
	\label{eqn:Hkld_apn}
\end{align}
Using (\ref{eqn:fun_opt_d_cusum_apn_2}) and (\ref{eqn:sigma_e_eig_apn}), we can rewrite the cost function as follows 
\begin{equation}
	f\left({\bf \Lambda}_e\right)=tr\left({\bf V}_e^T {\bf H}_{KLD}{\bf V}_e{\bf \Lambda}_e\right)
\end{equation}
which represents a line in the $p$ dimensional hyperplane. Therefore, the cost function is convex in nature.

The proof for the sub-optimal CUSUM case is as follows. 
We have replaced all the ${\bf B}$ matrices by ${\bf B}_e$ where ${\bf B}_e = {\bf B}{\bf V}_e$ and ${\bf \Sigma}_e$ by ${\bf \Lambda}_e$ to keep the structure of the KLD and $\sigma_{\widetilde \gamma}^2$ expressions as (\ref{eqn:subopt_kld_miso}) and (\ref{eqn:sigma_sq_attack_th}) respectively. 
\begin{align}
	&f\left({\bf \Lambda}_e\right)=\frac{1}{2}\left(\frac{M_z{{\bf \sigma}_z^2}+\sum_{i=1}^n\left[{\bf M}_{e\lambda}\right]_{ii}\lambda_{e,i}}{{\bf \sigma}_\gamma^2}\right) \cr
	&-\frac{1}{2}\log\left(\frac{M_z{{\bf \sigma}_z^2}+\sum_{i=1}^n\left[{\bf M}_{em}\right]_{ii}\lambda_{e,i}}{{\bf \sigma}_\gamma^2}\right)
	\label{eqn:f_lambda_e} \\
	&\text{where }{\bf M}_{em}= {\bf B}_e^T\left({\bf I}_n - {\bf K}{\bf C}\right)^T{\bf \Sigma}^e_{x^F}\left({\bf I}_n- {\bf K}{\bf C}\right){\bf B}_e \text{, and}  \cr
	&{\bf M}_{e\lambda} = {\bf B}_e^T\left({\bf I}_n - {\bf K}{\bf C}\right)^T{\bf \Sigma}^e_{x^F}\left({\bf I}_n - {\bf K}{\bf C}\right){\bf B}_e+{\bf B}_e^T{\bf C}^T{\bf C}{\bf B}_e. \cr
	\label{eqn:M2_lambda}
\end{align}
The ${\bf \Sigma}^e_{x^F}$ is the same as in (\ref{eqn:Me_apn}).
The first derivative of the cost function with respect to the $j$-th eigenvalue $\lambda_{e,j}$ is as follows, 
\begin{align}
	&\frac{\partial }{\partial \lambda_{e,j}}f\left({\bf \Lambda}_e\right)=\frac{1}{2\sigma_\gamma^2}\left[{\bf M}_{e\lambda}\right]_{jj} \cr
	&-\frac{1}{2}\frac{1}{M_z{{\bf \sigma}_z^2}+\sum_{i=1}^n\left[{\bf M}_{em}\right]_{ii}\lambda_{e,i}}\left[{\bf M}_{em}\right]_{jj}.
	\label{eqn:first_derivative_f}
\end{align}
The second derivative of the cost function is as follows, 
\begin{align}
	&\frac{\partial }{\partial \lambda_{e,i}}\frac{\partial }{\partial \lambda_{e,j}}f\left({\bf \Lambda}_e\right)=\frac{1}{2}\left[{\bf M}_{em}\right]_{ii}\left[{\bf M}_{em}\right]_{jj}t_f^2 \text{, and} \cr
	&t_f=\frac{1}{M_z{{\bf \sigma}_z^2}+\sum_{i=1}^n\left[{\bf M}_{em}\right]_{ii}\lambda_{e,i}}
	\label{eqn:Hs_ij}
\end{align}
where $\frac{\partial }{\partial \lambda_{e,i}}\frac{\partial }{\partial \lambda_{e,j}}f\left({\bf \Lambda}_e\right)$ is the $ij$-th element of the Hessian matrix ${\bf H}_s=\triangledown_{{\bf \Lambda}_e}^2f\left({\bf \Lambda}_e\right)$. From (\ref{eqn:Hs_ij}), it is clear that each column of ${\bf H}_s$ is linearly dependent on any other column of the matrix. This means that we have all eigenvalues except one to be zero. Therefore, determinants of all the principle minors of ${\bf H}_s$ are zero. Also, the diagonal elements of ${\bf H}_s$ are non-zero. So, we can conclude that KLD is convex in ${\bf \Lambda}_e$. 

Since the cost function under both the tests are convex, the optimum ${\bf \Lambda}_e$, which maximizes the expected KLD or the KLD, will be on one of the vertices of the feasible region provided by (\ref{eqn:const_lambda_apn}) and (\ref{eqn:const_lambda_i_apn}). That is possible when the optimum ${\bf \Lambda}_e$ contains only one non-zero element. This property of the convex function over a polyhedron set can be proved using Jensen's inequality.

\section{Optimization algorithm}
\label{apdx:opt_algo}

The Lagrangian and it's first and second derivatives for the MISO system are given as follows. We multiply the cost function by -1 to convert the optimization problem into a minimization one.

For the optimal CUSUM test, using (\ref{eqn:H_KLD_apn}) and (\ref{eqn:deltaLQG}) the Lagrangian can be written in the following form
\begin{align}
	L\left({\bf v}_\lambda,\mu\right)= - {\bf v}_\lambda^T{\bf H}_{KLD}{\bf v}_\lambda+\mu\left({\bf v}_\lambda^T{\bf H}{\bf v}_\lambda-J\right). 
	\label{enq:L_v_opt}
\end{align}
The first derivatives of $L\left({\bf v}_\lambda,\mu\right)$ with respect to ${\bf v}_\lambda$ and $\mu$ are  
\begin{align}
	&\nabla_{{\bf v}_\lambda}L(.)={\bf C}_c{\bf v}_\lambda \text{, and} \\
	&\frac{\partial}{\partial \mu}L(.)={\bf v}_{\lambda}^T{\bf H}{\bf v}_{\lambda}-J \\
	\text{where }&{\bf C}_c = -2{\bf H}_{KLD}+2\mu{\bf H}.
\end{align}
The Hessian matrix of $L(.)$ with respect to ${\bf v}_{\lambda}$ is as follows,
\begin{align}
	{\bf H}_s=\nabla_{{\bf v}_\lambda}^2L(.)={\bf C}_{c}^T.
\end{align}

For the sub-optimal CUSUM test, we form the Lagrangian using (\ref{eqn:subopt_kld_miso}), (\ref{eqn:M2}), and (\ref{eqn:deltaLQG}) for the KLD and $\Delta LQG$ respectively as follows
\begin{align}
	&L\left({\bf v}_\lambda,\mu\right)=-\frac{1}{2}\left(\frac{M_z\sigma_z^2+{\bf v}_{\lambda}^T{\bf M}_{ev}{\bf v}_{\lambda}+{\bf v}_{\lambda}^T{\bf B}^T{\bf C}^T{\bf C}{\bf B}{\bf v}_{\lambda}  }{\sigma_\gamma^2}\right) \ \cr
	&-\frac{1}{2}+\frac{1}{2}\log\left(M_z\sigma_z^2+{\bf v}_{\lambda}^T{\bf M}_{ev}{\bf v}_{\lambda}\right)-\frac{1}{2}\log\left(\sigma_\gamma^2\right) \nonumber \\
	&+\mu\left({\bf v}_{\lambda}^T{\bf H}{\bf v}_{\lambda} - J \right), \label{enq:L_v}
\end{align}
where ${\bf M}_{ev}$ is the first part of the right hand side of (\ref{eqn:M2}), \ie, ${\bf M}_{ev}= {\bf B^T}\left({\bf I}_n - {\bf K}{\bf C}\right)^T{\bf \Sigma}^e_{x^F}\left({\bf I}_n - {\bf K}{\bf C}\right){\bf B}$.
The first derivatives of $L\left({\bf v}_\lambda,\mu\right)$ with respect to ${\bf v}_\lambda$ and $\mu$ are  
\begin{align}
	&\nabla_{{\bf v}_\lambda}L(.)=-\frac{1}{\sigma_\gamma^2}\left({\bf M}_{ev}{\bf v}_{\lambda}+{\bf B}^T{\bf C}^T{\bf C}{\bf B}{\bf v}_{\lambda}\right) \cr 
	&+\frac{{\bf M}_{ev}{\bf v}_{\lambda}}{M_z\sigma_z^2+{\bf v}_{\lambda}^T{\bf M}_{ev}{\bf v}_{\lambda}}+2\mu{\bf H}{\bf v}_{\lambda} ={\bf C}_c{\bf v}_{\lambda} \text{, and} \label{eqn:Delta_v_L} \\
	&\frac{\partial}{\partial \mu}L(.)={\bf v}_{\lambda}^T{\bf H}{\bf v}_{\lambda}-J,
\end{align}
\begin{align}
	&\text{where } {\bf C}_c = {\bf C}_{ca}+{\bf v}_{\lambda}^T{\bf M}_{ev}{\bf v}_{\lambda}{\bf C}_{cb}, \ \\
	& {\bf C}_{ca}=\left(1-\frac{{\bf M}_{z}\sigma_z^2}{\sigma_\gamma^2}\right){\bf M}_{ev}-\frac{{\bf M}_{z}\sigma_z^2}{\sigma_\gamma^2}{\bf B}^T{\bf C}^T{\bf C}{\bf B}+2\mu{\bf M}_{z}\sigma_z^2{\bf H}, \ \\
	&\text{and } {\bf C}_{cb} = 2\mu{\bf H}-\frac{1}{\sigma_\gamma^2}{\bf M}_{ev}-\frac{1}{\sigma_\gamma^2}{\bf B}^T{\bf C}^T{\bf C}{\bf B}.
\end{align}
The Hessian matrix of $L(.)$ with respect to ${\bf v}_{\lambda}$ is as follows
\begin{align}
	{\bf H}_s=\nabla_{{\bf v}_\lambda}^2L(.)={\bf C}_{ca}^T+2{\bf M}_{ev}{\bf v}_{\lambda}{\bf v}_{\lambda}^T{\bf C}_{cb}.
	\label{eqn:hessian}
\end{align}
A primal-dual approach to find the optimum ${\bf \Sigma}_e$ is provided in Algorithm~\ref{algo:opt_e}.
\begin{algorithm}[h!]
	\begin{algorithmic}
		\STATE Initialize: $s_0$, $K_{\mu,0}$, $max\_iteration$, and $\mu = 0$.
		\FOR{$k = 1: max\_iteration$}
		\STATE  Find the best solution ${\bf v}_{temp}^*$ for the set of equations, $\nabla_{{\bf v}_\lambda}L(.)=0$ and $\frac{\partial}{\partial \mu}L(.)=0$. 
		\IF {${\bf v}_{temp}^{T*}{\bf H}{\bf v}_{temp}^{*} - J \ne 0$}
		\STATE	$\mu \leftarrow \mu+s_k	\frac{\partial}{\partial \mu}L(.)$ 
		\ELSE
		\IF {${\bf H}_s \ge 0$} 
		\STATE ${\bf v}_{\lambda}^* \leftarrow {\bf v}_{temp}^*$ 
		\STATE break
		\ELSE
		\STATE $\mu \leftarrow \mu+ K_{\mu,k} \left(-\frac{\partial}{\partial \mu}L(.)  \right)$
		\ENDIF
		\ENDIF
		\ENDFOR
		\STATE ${\bf \Sigma}_e = {\bf v}_{\lambda}^*\left[{\bf v}_{\lambda}^*\right]^T$
	\end{algorithmic}
	\caption{To find optimum ${\bf \Sigma}_e$}
	\label{algo:opt_e}
\end{algorithm}
The step sizes ($s_k$, $K_{\mu,k}$) can be derived at every step using the backtracking algorithm \cite{boyd2004convex} which ensures the convergence to some local optima since the Hessian matrices under both the tests are indefinite matrices.


\section{System Parameters}
\label{apdx:system_param}
For both the systems, $ARL_h = 1000$. \\
\textbf{System-A parameters}:
\begin{align*}
	{\bf A} &=\begin{bmatrix}0.75 & 0.2 \\0.2 & 1.0 \end{bmatrix}           &  {\bf B} &=\begin{bmatrix}0.9 & 0.5 \\0.1 & 1.2 \end{bmatrix}              &  {\bf C}&=\begin{bmatrix}1.0 & -1.0  \end{bmatrix} \\
	{\bf Q} &=diag\begin{bmatrix}1 & 1  \end{bmatrix}           &  {\bf R} &=1              &  {\bf W}&=diag\begin{bmatrix}1 & 2  \end{bmatrix} \\
	{\bf U} &=diag\begin{bmatrix}0.4 & 0.7  \end{bmatrix}           &  \sigma_z^2 &=10             &  \rho&=0.5 
\end{align*}
\textbf{System-B parameters}:
\begin{align*}
	{\bf A} =\begin{bmatrix}0.968 &0&0.082 &0 \\ 0&0.978&0&0.064 \\ 0&0&0.917&0 \\ 0&0&0&0.935 \end{bmatrix} &  {\bf B} &=\begin{bmatrix} 0.164&0.004 \\0.002&0.124 \\ 0&0.092 \\ 0.060&0  \end{bmatrix} 
\end{align*}
\begin{align*}
	{\bf C} &= \begin{bmatrix} 5 &0 &0 &0 \\  0 &5 &0 &0  \end{bmatrix} &  {\bf R} &=diag\begin{bmatrix}0.5 & 0.5  \end{bmatrix} \\
	{\bf Q} &=diag\begin{bmatrix}0.25 & 0.25 & 0.25 & 0.25  \end{bmatrix}     &  {\bf U} &=diag\begin{bmatrix}2 & 2  \end{bmatrix}      \\
	{\bf W} &=diag\begin{bmatrix}5 & 5 & 1 & 1  \end{bmatrix}        &  {\bf Q}_a &=diag\begin{bmatrix}5 & 5  \end{bmatrix}     \\
	{\bf A}_a &=diag\begin{bmatrix}0.4 & 0.2 & 0.2 & 0.7  \end{bmatrix}           
\end{align*}

\bibliographystyle{IEEEtran}
\bibliography{IEEEabrv,Postdoc}

\end{document}